\documentclass[12pt]{amsart}
\usepackage{amssymb,bm,graphicx,graphics,mathrsfs,bbm,color,float,accents}
\usepackage{subcaption}
\usepackage{longtable}
\numberwithin{equation}{section}
\numberwithin{figure}{section}
\numberwithin{table}{section}
\allowdisplaybreaks[4]
\theoremstyle{plain}
\newtheorem{theorem}{Theorem}[section]
\newtheorem{lemma}[theorem]{Lemma}
\newtheorem{corollary}[theorem]{Corollary}
\theoremstyle{remark}
\newtheorem{remark}[theorem]{Remark}
\begin{document}
\def\O{\Omega}
\def\G{\Gamma}
\def\E{\mathbb{E}}
\def\R{\mathbb{R}}
\def\p{\partial}
\def\cA{\mathcal{A}}
\def\cT{\mathcal{T}}
\def\cN{\mathcal{N}}
\def\Th{\cT_h}
\def\ED{\mathcal{E}_D}
\def\LT{{L^2(\O)}}
\def\LTD{{L^2(D)}}
\def\LTT{[\LT]^2}
\def\LTz{L^2_0(\O)}
\def\Poly{\mathbb{P}}
\def\curl{\mathrm{curl}\,}
\def\bcurl{\mathbf{curl}\,}
\def\grad{\mathbf{grad}\,}
\def\div{\mathrm{div}\,}
\def\bu{\bm{u}}
\def\bv{\bm{v}}
\def\bn{\bm{n}}
\def\bof{\bm{f}}
\def\Hcurl{H(\mathrm{curl};\O)}
\def\HzCurl{H_0(\mathrm{curl};\O)}
\def\HDivz{H(\mathrm{div}^0;\O)}
\def\HDiv{H(\mathrm{div};\O)}
\def\HOne{{H^1(\O)}}
\def\HOneD{{H^1(D)}}
\def\HOnez{H^1_0(\O)} \def\HOneZ{H^1(\O)\cap\LTz}
\def\zV{\mathring{V}}
\def\Cc{C_{\rm coer}}
\def\hfactor{h^{\min(\ExpoStar,k)}}
\def\hfactorG{h^{\min(\Expo,k)}}
\def\hfactorE{h^{\min(\ExpoE,k)}}
\def\PiO{\Pi_{k,h}^1}
\def\PiZ{\Pi_{k,h}^0}
\def\Id{\mathrm{Id}}
\def\bA{\mathbf{A}}
\def\bc{\mathbf{c}}
\def\bb{\mathbf{b}}
\def\red{\color{red}}
\def\cQ{\mathcal{Q}}
\def\Expo{(\pi/\omega)}
\def\ExpoE{(\pi/\omega)-\epsilon}
\def\Reg{{H^{1+\Expo}(\O)}}
\def\RegE{{H^{1+\ExpoE}(\O)}}
\def\ExpoStar{(\pi/\omega)^*}
\def\RegStar{{H^{1+\ExpoStar}(\O)}}
\def\CExpo{C_{\raise 1pt\hbox{$\scriptscriptstyle\O$}}^*}
\def\CO{C_\Omega}
\def\COE{C_{\Omega,\epsilon}}
\def\eBdry{e_h^{\scriptscriptstyle\p\O}}
%
\title[Virtual Element Methods for a Planar Quad-Curl Problem]
{Virtual element methods for a quad-curl problem on general planar domains}
\thanks{This  work  was supported in part by
the National Science Foundation under
 Grant No. DMS-25-13273.}
%
\author{Susanne C. Brenner} \address{Susanne C. Brenner,
Department of Mathematics and Center for
Computation and Technology, Louisiana State University, Baton Rouge,
LA 70803, USA}%
\email{brenner@math.lsu.edu}
\author{Li-yeng Sung} \address{Li-yeng Sung,
 Department of Mathematics and Center for Computation and Technology,
 Louisiana State University, Baton Rouge, LA 70803, USA}%
\email{sung@math.lsu.edu}
\author{Jai Tushar}
\address{Jai Tushar, Department of Mathematics and Center for
Computation and Technology, Louisiana State University, Baton Rouge,
LA 70803, USA}
\email{Jai.Tushar@lsu.edu}
\begin{abstract}
  We  design and analyze virtual element methods for a
  quad-curl problem on general polygonal domains that are
  based on the Hodge decomposition of divergence-free vector fields.
  Numerical results that corroborate the theoretical analysis are also presented.
\end{abstract}
\keywords{quad-curl, Hodge decomposition, virtual element, polygonal domain}
\subjclass{65N30, 65N15, 35G45, 35Q60}
\date{April 20, 2026}
\maketitle
\section{Introduction}\label{sec:Introduction}
 Let $\O$ be a bounded polygonal domain in $\R^2$, $\bof\in\LTT$
 and $\beta,\gamma$ be nonnegative constants,
 where we assume $\gamma>0$ if $\O$ is not simply connected.
 The quad-curl problem is to find
 $\bu\in \E$ such that
\begin{equation}     \label{eq:QuadCurl}
 \big(\bcurl(\curl\bu),\bcurl(\curl\bv)\big)_\LT
 +\beta(\curl\bu,\curl\bv)_\LT+\gamma(\bu,\bv)_\LT=
(\bof,\bv)_\LT
\end{equation}
 for all $\bv\in\E$, where the energy space $\E$ is defined by
\begin{equation*}
 \E=\{\bv\in\LTT:\,\curl\bv\in H^1_0(\O), \;\div\bv=0
\quad\text{and}\quad \bn\times\bv=0\quad\text{on}\;\p\O\},
\end{equation*}
 and $\bn$ is the unit outer normal on $\p\O$. $\E$ is a Hilbert space under the inner product
\begin{equation*}
  (\bu,\bv)_\E=(\bu,\bv)_\LT+\big(\bcurl(\curl\bu),\bcurl(\curl\bv)\big)_\LT.
\end{equation*}
 It was shown in \cite{BSS:2017:Curl4} that \eqref{eq:QuadCurl} is uniquely solvable.
\begin{remark} \label{rem:Notation}
  We follow standard notation for function spaces, norms and differential
   operators that can be found for example in
  \cite{ADAMS:2003:Sobolev,Ciarlet:1978:FEM,Monk:2003:Maxwell,BScott:2008:FEM,BGHRR:2025:NS}.
\end{remark}
\begin{remark}\label{rem:Curl}
For a vector field $\bv=[v_1,v_2]^t$,   the curl of $\bv$ is a scalar function given by
\begin{equation*}
  \curl\bv=(\p v_2/\p x_1)-(\p v_1/\p x_2). \end{equation*}
 For a scalar function $\phi$, the $\mathbf{curl}$ of $\phi$ is a vector field given by
\begin{equation*}
  \bcurl\phi=[\p \phi/\p x_2,-\p \phi/\p x_1]^t.
\end{equation*}
\end{remark}
\par
 The quad-curl problem \eqref{eq:QuadCurl} is related to the Maxwell
 transmission eigenvalue problem
 \cite{CH:2007:Transmission,CCMS:2010:Anisotropic,MS:2012:MTE}
 and magnetohydrodynamics with hyperresistivity
 (cf. \cite{Biskamp:2005:Plasmas,CSZ:2007:EMHD}).
 There is a growing literature on numerical solutions of \eqref{eq:QuadCurl} and its
 three dimensional version
 (cf. \cite{ZHX:2011:Curl4,HHSX:2012:Curl4,Sun:2016:Curl4,BSS:2017:Curl4,SCGW:2018:QuadCurl,
 SZZ:2019:WGQuadCurl,BCS:2019:MGHodge,
 ZWZ:2019:QuadCurl,HZZ:2020:CurlCurlConforming,ZZ:2021:VEMQuadCurl,CQX:2021:QuadCurl,
 WSLZ:2021:QuardCurlSpectral,CCH:2022:QuarCurl,WWZ:2023:WeakGalerkinQuadCurl,
 ZZ:2023:NonconformingQuadCurl,Huang:2023:QuadCurl,FHH:2023:QuadCurl,BCS:2024:3DQuadCurl,
 HZ:2024:QuadCurl,WCS:2024:QuadCurl,
 WLZZ:2025:QuadCurl,GZZZ:2025:QuadCurl,CHZ:2025:QuadCurl,LMWZ:2025:QuadCurl,ZDWZ:2026:QuadCurl,
 WZ:2026:QuadCurl,WZZ:2026:QuadCurl}).
 Among these references,
 there is only one polytopal method for the two dimensional quad-curl problem
 in \cite{ZZ:2021:VEMQuadCurl} that extends the
 $H(\curl^2)$ conforming finite element methods in \cite{HZZ:2020:CurlCurlConforming}
 to the virtual element setting.
 The goal of this paper is to design and analyze virtual element methods
 for \eqref{eq:QuadCurl} that are based on the Hodge decomposition
 approach in \cite{BSS:2017:Curl4},
 which allows one to use standard virtual element methods for second order elliptic
 boundary value problems
 (cf. \cite{BBCMMR:2013:VEM,AABMR:2013:Projector,BLR:2017:VEM,
 BGS:2017:VEM2,BSung:2018:VEM,BBMR:2023:VEMActa}),
 and consequently
 the numerical schemes in this paper are simpler than the ones in
 \cite{ZZ:2021:VEMQuadCurl}.
\par
 The rest of the paper is organized as follows.  We recall the Hodge
 decomposition approach to \eqref{eq:QuadCurl}
 in Section~\ref{sec:Hodge} and present the virtual element methods
 in Section~\ref{sec:VEM}.
 The error analysis is carried out in Section~\ref{sec:xihError} and
 Section~\ref{sec:uhError}.
 We report some numerical results in Section~\ref{sec:Numerics} and end
 with some concluding remarks
 in Section~\ref{sec:Conclusions}.  For the convenience of the readers,
 we also provide in
 Appendix~\ref{append:VEMError} the error analysis
 of the virtual element methods for the Dirichlet and Neumann boundary value problems
 that appear in this paper.
\par
 Throughout the paper  we use $C$ (with or without subscripts) to
 denote a generic positive constant
 that is independent of the mesh size.
 \par
 We record here two Poincar\'e-Friedrichs inequalities
 (cf. \cite{ADAMS:2003:Sobolev,Necas:2012:Direct}):
\begin{alignat} {3}
\|v\|_\LT&\leq C_\O\big[\big|(v,1)_\LT\big|+  |v|_\HOne\big]&\qquad&\forall\,v\in \HOne,
  \label{eq:PFNeumann}    \\
\|v\|_\LT&\leq C_\O |v|_\HOne &\qquad&\forall\,v\in \HOnez,
\label{eq:PFDirichlet}
\end{alignat}
 that will be used frequently in the
 analysis of the Hodge decomposition method.
\par
 The following integration by parts formulas
  (cf. \cite[Section~1.2.4.1 and Section~1.2.4.2]{BGHRR:2025:NS}) are also useful:
\begin{equation} \label{eq:IBVP1}
  (\bv,\bcurl \psi)_\LT=(\curl\bv,\psi)_\LT+
  \int_{\p\O}(\bv\times\bn)\psi\, ds
\end{equation}
 for all $\bv\in\Hcurl$ and $\psi\in\HOne$, and
\begin{equation} \label{eq:IBVP2}
  (\bv,\grad\psi)_\LT=-(\div\bv,\psi)_\LT+
  \int_{\p\O}(\bv\cdot\bn)\psi\,ds
\end{equation}
 for all $\bv\in \HDiv$ and $\psi\in\HOne$.

\section{The Hodge Decomposition Approach}\label{sec:Hodge}
 Let $m$ be the Betti number of $\O$ (cf. Figure~\ref{fig:Betti}
 for examples with $m=0$, $1$ and $2$).
\begin{figure}[H]
\begin{center}
  \includegraphics[height=1.2in]{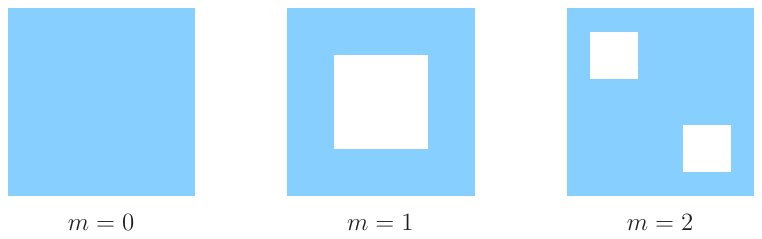}
\end{center}
\caption{Domains with Betti numbers $0$, $1$ and $2$}
\label{fig:Betti}
\end{figure}
\par
 It follows from the Hodge decomposition of divergence-free vector fields
 (cf. \cite{BCNS:2012:Hodge} and \cite[Section~2.2.1]{BGHRR:2025:NS})
 that we can represent the solution
 $\bu$ of \eqref{eq:QuadCurl} in the following form:
\begin{equation}\label{eq:HodgeDecomposition}
  \bu=\bcurl\phi+\sum_{j=1}^m c_j\grad\varphi_j,
\end{equation}
 where $c_1,\ldots,c_m\in\R$,   $\phi\in \HOneZ$,
$\LTz=\{v\in\LT: (v,1)_\LT=0\}$, and the harmonic functions
$\varphi_1,\ldots,\varphi_m$ are defined as follows.
\par
 Let $\G_0$ be the outer boundary of $\O$ and $\Gamma_1,\ldots,\G_m$
 be the components of the inner boundary of $\O$.
 The harmonic functions $\varphi_1,\ldots,\varphi_m\in \HOne$  are determined by
\begin{subequations}\label{subeqs:HarmonicFunctions}
\begin{alignat}{3}
  (\grad\varphi_j,\grad v)_\LT&=0&\qquad&\forall\,v\in \HOnez,
  \label{eq:Harmonic}   \\
  \varphi_j\Big|_{\G_0}&=0,\label{eq:OuterBoundary}\\
  \varphi_j\Big|_{\G_k}&=\begin{cases}
    1 &\qquad\text{if $k=j$}\\[4pt]
    0&\qquad\text{if $k\neq j$}
  \end{cases}        &\qquad&\text{for}\;1\leq j,k\leq m.
  \label{eq:InnerBoundary}
\end{alignat}
\end{subequations}
\par
  Let $\omega$ be the largest interior angle of $\O$ and
   $\epsilon$ be an arbitrary positive number.
  The harmonic functions $\varphi_j$ for $1\leq j\leq m$
  satisfy the estimates
\begin{subequations}\label{subeqs:HarmonicRegularity}
\begin{alignat}{3}
  \|\varphi_j\|_\Reg&\leq \CO
  &\qquad&\text{if $(\pi/\omega)$ is not an integer},\label{eq:Generic}\\
  \|\varphi_j\|_\RegE&\leq \COE  &\qquad&\text{if $(\pi/\omega)$
   is an integer},\label{eq:Exceprional}
\end{alignat}
\end{subequations}
 by elliptic regularity (cf. \cite[Section~5.1]{Grisvard:1985:EPN} and
 \cite[Section~2.5]{Dauge:1988:EBV}).
\begin{remark}\label{rem:RegularityNotation}
  For simplicity, we will use $\ExpoStar$ to denote $(\pi/\omega)$
   and $\CExpo$ to denote $\CO$ if $(\pi/\omega)$ is not an integer,
   and we will use
   $\ExpoStar$ to denote $(\pi/\omega)-\epsilon$ and
   $\CExpo$ to denote $\COE$ if $(\pi/\omega)$ is an integer.
   Consequently we can rewrite
   \eqref{subeqs:HarmonicRegularity} concisely as
\begin{equation*}
  \|\varphi_j\|_\RegStar\leq \CExpo.
\end{equation*}
\end{remark}
\begin{remark}\label{rem:Orthogonality}
  It follows from \eqref{eq:IBVP1}, \eqref{eq:OuterBoundary}
  and \eqref{eq:InnerBoundary}  that
\begin{equation*}
  (\bcurl\psi,\grad\varphi_j)_\LT=0 \qquad\text{for}\;\psi\in
  \HOne \quad\text{and}\quad 1\leq j\leq m
\end{equation*}
(cf. \cite[Lemma~2.4]{BCNS:2012:Hodge}).
\end{remark}
\begin{remark}   \label{rem:HarmonicFunctions}
In view of \eqref{subeqs:HarmonicFunctions},
  the vector fields $\grad\varphi_1,\ldots,\grad\varphi_m$
  belong to the energy space $\E$
  (cf. \cite[Corollary~2.5]{BCNS:2012:Hodge} and
  \cite[Chapter I, Remark~2.3]{GR:1986:NS}).
\end{remark}
\begin{remark}\label{rem:phiBoundary}
 Note that the definition of $\E$, the Hodge decomposition \eqref{eq:HodgeDecomposition}
  and the boundary conditions \eqref{eq:OuterBoundary} and \eqref{eq:InnerBoundary} imply
 \begin{equation*}
   0=\bn\times\bu=\bn\times\Big(\bcurl\phi+
   \sum_{j=1}^m c_j\grad\varphi_j\Big)=\bn\times\bcurl\phi.
\end{equation*}
\end{remark}
\begin{remark}\label{rem:Convention}
  We follow the convention that the summation in \eqref{eq:HodgeDecomposition}
  is ignored if $m=0$.
\end{remark}
\par
 We find $\bu$
 through \eqref{eq:HodgeDecomposition}  in the Hodge decomposition
 approach to \eqref{eq:QuadCurl}.
 Therefore we need to find $\phi$, and also
 $\varphi_1,\ldots,\varphi_m$ and $c_1,\ldots,c_m$ if $m\geq1$.
 Below we sketch the derivations of the relevant problems that determine these unknowns.
 Details can be found in
 \cite{BSS:2017:Curl4}.
%
\subsection{Problem for $\phi$}\label{subsec:phi}
 We can determine $\phi$ in \eqref{eq:HodgeDecomposition} through  the function
\begin{equation*}
  \xi=\curl\bu\in H^1_0(\O)\cap\LTz.
\end{equation*}
\par
 Indeed it is easy to see that \eqref{eq:IBVP1}, \eqref{eq:HodgeDecomposition}
  and Remark~\ref{rem:Orthogonality} imply
  $\phi\in\HOneZ$ satisfies
\begin{equation*}
  (\bcurl\phi,\bcurl\psi)_\LT=(\bu,\curl\psi)_\LT=(\xi,\psi)_\LT
  \qquad\forall\,\psi\in\HOneZ,
\end{equation*}
 or equivalently, the function $\phi\in\HOne$ satisfies
\begin{equation} \label{eq:phiEq2}
  (\bcurl\phi,\bcurl\psi)_\LT+(\phi,1)_\LT(\psi,1)_\LT=(\xi,\psi)_\LT
  \qquad\forall\,\psi\in\HOne,
\end{equation}
 which is a well-posed Neumann boundary value problem.
%
\subsection{Problem for $\xi$}\label{subsec:xi}
 Given any $\eta\in\HOnez\cap\LTz$, there exists a unique
 $\bv\in \HzCurl\cap \HDivz$ such that
\begin{equation}\label{eq:potential}
  \curl\bv=\eta.
\end{equation}
 In particular the vector field $\bv$ belongs to $\E$.
\par
 In fact, let $\theta\in \HOne$ be defined by
\begin{equation*}
  (\bcurl\theta,\bcurl\tau)_\LT+(\theta,1)_\LT(\tau,1)_\LT=(\eta,\tau)_\LT
  \qquad\forall\,\tau\in \HOne.
\end{equation*}
 Then $\bv=\bcurl\theta$ has the desired properties because of \eqref{eq:IBVP1}.
\par
 It follows from \eqref{eq:QuadCurl} and \eqref{eq:potential} that
\begin{equation}\label{eq:xiEquation1} (\bcurl\xi,\bcurl\eta)_\LT
+\beta(\xi,\eta)_\LT=(Q\bof-\gamma\bu,\bv)_\LT,
\end{equation}
 where $Q$ is the orthogonal projection from $\LTT$ onto $\HDivz$.
\par
 On the other hand, we have, for $1\leq j\leq m$,
\begin{equation}\label{eq:xiEquation2}
  \int_{\G_j}(Q\bof-\gamma\bu)\cdot \bn_{\G_j}ds
  =\int_\O (Q\bof-\gamma\bu)\cdot\grad\varphi_j dx=
  \int_\O (\bof-\gamma\bu)\cdot\grad\varphi_jdx=0 \end{equation}
 by \eqref{eq:QuadCurl}, \eqref{eq:IBVP2}, \eqref{eq:InnerBoundary},
 Remark~\ref{rem:Orthogonality} and Remark~\ref{rem:HarmonicFunctions}.
 Here $\bn_{\G_j}$ is the outer unit normal
 along $\G_j$.
\par
 Since $\mathrm{div}(Q\bof-\gamma\bu)=0$, we conclude from
 \eqref{eq:xiEquation2} and \cite[Lemma~2.2.1]{BGHRR:2025:NS} that there exists
 a unique $\rho\in \HOne\cap\LTz$ such that
\begin{equation}\label{eq:rho} \bcurl\rho=Q\bof-\gamma\bu.
\end{equation}
\par
 Putting \eqref{eq:IBVP1},  \eqref{eq:potential}, \eqref{eq:xiEquation1}
 and \eqref{eq:rho} together,
 we arrive at
\begin{equation}\label{eq:xiEquation}
  (\bcurl\xi,\bcurl\eta)_\LT+\beta(\xi,\eta)_\LT=(\rho,\eta)_\LT
  \qquad\forall\,\eta\in\HOnez\cap\LTz.
\end{equation}
\begin{remark}\label{rem:Simpler}
  The derivations of \eqref{eq:rho} and \eqref{eq:xiEquation} are simpler than the
  ones in \cite{BSS:2017:Curl4}.
\end{remark}
%
\subsubsection{The Case Where $\gamma=0$}\label{subsubsec:gammaZero}
 In  this case \eqref{eq:rho} and \eqref{eq:xiEquation}  are decoupled.
 We can rewrite \eqref{eq:rho} for $\rho\in \HOne\cap\LTz$ as
\begin{equation*}
  (\bcurl\rho,\bcurl \psi)_\LT=(\bof,\curl\psi)_\LT
  \qquad\forall\,\psi\in \HOne\cap\LTz,
\end{equation*}
 or equivalently, $\rho\in\HOne$ satisfies
\begin{equation}\label{eq:rhoEquation}
(\bcurl\rho,\bcurl\psi)_\LT
+(\rho,1)_\LT(\psi,1)_\LT=(\bof,\curl\psi)_\LT \qquad\forall\,\psi\in \HOne,
\end{equation}
 which is a well-posed Neumann boundary value problem.
\par
 Therefore one can first solve \eqref{eq:rhoEquation} for $\rho$ and
 then solve \eqref{eq:xiEquation}
 for $\xi$. Note that \eqref{eq:rhoEquation} implies
\begin{equation} \label{eq:rhoBdd}
  \|\bcurl\rho\|_\LT\leq \|\bof\|_\LT.
\end{equation}
\par
 Furthermore, the solution of the nonstandard boundary value
 problem \eqref{eq:xiEquation}
 can be reduced to  two standard well-posed Dirichlet
 boundary value problems:
\begin{alignat}{3}
  &\xi=\xi_0-[(1,\xi_0)_\LT/(1,\xi_1)_\LT]\xi_1,
  \label{eq:xiRepresentation}\\
  \intertext{where $\xi_0,\xi_1\in \HOnez$ are determined by}
  &(\bcurl\xi_0,\bcurl\eta)_\LT+\beta(\xi_0,\eta)_\LT=(\rho,\eta)_\LT
  &\qquad&\forall\,\eta\in\HOnez,\label{eq:xi0}\\
  &(\bcurl\xi_1,\bcurl\eta)_\LT+\beta(\xi_1,\eta)_\LT=(1,\eta)_\LT
    &\qquad&\forall\,\eta\in\HOnez.\label{eq:xi1}
\end{alignat}
\par
 Note that \eqref{eq:xi1} implies $(1,\xi_1)_\LT>0$ so that
 \eqref{eq:xiRepresentation} is well-defined.
\par
 Moreover it follows from  \eqref{eq:PFDirichlet},
 \eqref{eq:rhoBdd} and \eqref{eq:xi0} that
\begin{equation} \label{eq:xi0Bdd}
  \|\xi_0\|_\HOne\leq   C\|\bof\|_\LT.
\end{equation}
 \par
  Similarly it follows from \eqref{eq:PFDirichlet} and \eqref{eq:xi1} that
\begin{equation}\label{eq:xi1Bdd} \|\xi_1\|_\HOne\leq C.
\end{equation}
\par
 Combining \eqref{eq:xiRepresentation}, \eqref{eq:xi0Bdd}
 and \eqref{eq:xi1Bdd},
 we see that
\begin{equation}\label{eq:xiBddZero}
  \|\xi\|_\HOne\leq C\|\bof\|_\LT.
\end{equation}
%
\subsubsection{The Case Where $\gamma>0$}\label{subsubsec:gammaPositive}
  In this case  \eqref{eq:rho} and \eqref{eq:xiEquation} are
  coupled and we can write them
  concisely as the following system:
  Find $(\zeta,\xi)\in [\HOne\cap\LTz]\times[\HOnez\cap\LTz]$
  such that
\begin{equation}\label{eq:System}
 \cA\big((\zeta,\xi),(\psi,\eta)\big)=\gamma^{-\frac12}(\bof,\bcurl\psi)_\LT
\end{equation}
 for all $(\psi,\eta)\in [\HOne\cap\LTz]\times[\HOnez\cap\LTz]$, where
 $\zeta=\gamma^{-\frac12}\rho$ and the bilinear form   $\cA(\cdot,\cdot)$
 is defined by
\begin{align}\label{eq:cA}
  \cA\big((\zeta,\xi),(\psi,\eta)\big)&=(\bcurl\zeta,\bcurl\psi)_\LT
  +\gamma^\frac12(\psi,\xi)_\LT
  -\gamma^\frac12(\zeta,\eta)_\LT   \\
  &\hspace{50pt}+(\bcurl\xi,\bcurl\eta)_\LT+\beta(\xi,\eta)_\LT.\notag
\end{align}
\par
 Furthermore, the solution of \eqref{eq:System} can be reduced to the
 following systems posed on the space of
 functions without the zero mean constraint:
\begin{alignat}{3}
  &(\zeta,\xi)=(\zeta_0,\xi_0)-[(1,\xi_0)_\LT/(1,\xi_1)_\LT](\zeta_1,\xi_1),
  \label{eq:SystemRepresentation}\\
\intertext{where $(\zeta_0,\xi_0), (\zeta_1,\xi_1)\in \HOne\times\HOnez$
 are determined by}
 &\cA\big(\zeta_0,\xi_0),(\psi,\eta)\big)+(\zeta_0,1)_\LT(\psi,1)_\LT=
   \gamma^{-\frac12}(\bof,\bcurl\psi)_\LT, \label{eq:System0}\\
  &\cA\big(\zeta_1,\xi_1),(\psi,\eta)\big)+(\zeta_1,1)_\LT(\psi,1)_\LT =(1,\eta)_\LT,
  \label{eq:System1}
\end{alignat}
 for all $(\psi,\eta)\in \HOne\times\HOnez$.
\begin{remark}\label{rem:BoundedAndCoercive}
  The bilinear form on the left-hand side of \eqref{eq:System0}
  and \eqref{eq:System1} is bounded
  and coercive on $\HOne\times\HOnez$ and hence \eqref{eq:System0}
  and \eqref{eq:System1}
  are well-posed.
\end{remark}
\par
 Again \eqref{eq:System1} implies $(1,\xi_1)_\LT>0$ so that
 \eqref{eq:SystemRepresentation} is
 well-defined.
\par
 Note that \eqref{eq:PFNeumann}, \eqref{eq:cA} and \eqref{eq:System0} imply
\begin{equation*}
  \|\zeta_0\|_\HOne^2+|\xi_0|_\HOne^2\leq
  \cA\big((\zeta_0,\xi_0),(\zeta_0,\xi_0)\big)
  \leq C\|\bof\|_\LT|\zeta_0|_\HOne
\end{equation*}
 and hence we have, by \eqref{eq:PFDirichlet},
\begin{equation}\label{eq:zeta0xi0Bdd}
 \|\zeta_0\|_\HOne+\|\xi_0\|_\HOne\leq C\|\bof\|_\LT.
\end{equation}
\par
 Similarly \eqref{eq:PFNeumann}, \eqref{eq:PFDirichlet},
 \eqref{eq:cA} and \eqref{eq:System1} imply
\begin{equation}\label{eq:zeta1xi1Bdd}
 \|\zeta_1\|_\HOne+\|\xi_1\|_\HOne\leq C.
\end{equation}
\par
 Finally, \eqref{eq:SystemRepresentation},
 \eqref{eq:zeta0xi0Bdd} and \eqref{eq:zeta1xi1Bdd}
 together imply
\begin{equation}\label{eq:xiBddPositive}
\|\xi\|_\HOne\leq C\|\bof\|_\LT.
\end{equation}

\subsection{Problem for $c_1,\ldots,c_m$}\label{subsec:Coefficients}
 It follows from \eqref{eq:QuadCurl}, \eqref{eq:HodgeDecomposition},
 Remark~\ref{rem:Orthogonality} and Remark~\ref{rem:HarmonicFunctions} that
\begin{equation} \label{eq:Coefficients}
  \sum_{j=1}^m(\grad\varphi_i,\grad\varphi_j)_\LT c_j =\gamma^{-1}(\bof,\grad\varphi_i)
  \quad\text{for}\quad 1\leq i\leq m. \end{equation}
\par
  Note that \eqref{eq:Coefficients} is a symmetric system that is  positive definite
  because of the boundary condition \eqref{eq:OuterBoundary}, and it implies
\begin{equation}\label{eq:CoefficientBdd}
  |c_j|\leq C\|\bof\|_\LT \qquad\text{for}\quad 1\leq j\leq m.
\end{equation}
%
\subsection{Summary}\label{subsec:Summary}
 Given $\bof\in\LTT$, we find $\xi$ by solving the problem in
 Section~\ref{subsubsec:gammaZero}
 if $\gamma=0$ or the problem in Section~\ref{subsubsec:gammaPositive} if $\gamma>0$.
 Then we determine
 $\phi$ by solving \eqref{eq:phiEq2}.  In the case where $m\geq 1$,
 we find $\varphi_1,\ldots,\varphi_m$ by solving  \eqref{subeqs:HarmonicFunctions}
 and
 the coefficients $c_1,\ldots,c_m$ by solving \eqref{eq:Coefficients}.
 The solution $\bu$ of \eqref{eq:QuadCurl} is then given by
 \eqref{eq:HodgeDecomposition}.

\section{Virtual Element Methods}\label{sec:VEM}
 We can turn the steps in Section~\ref{subsec:Summary} into
 a numerical procedure by
 using virtual element methods to
 discretize  the boundary value problems, where the two cases
 $\gamma=0$ and $\gamma>1$ will be treated separately.
\subsection{Virtual Element Spaces}\label{subsec:VEM}
 Let $\cT_h$ be a polytopal mesh on $\O$, where the parameter $h$
 stands for the mesh size, which is the maximum of the diameters of
 the polygons in $\cT_h$.
  The diameter of  a polygon $D\in\cT_h$ is
  denoted by $h_D$  and the set of the edges of $D$ is
  denoted by $\ED$.
  We assume the following shape
 regularity on $D\in\cT_h$:
 There exists a constant $\Theta \in (0,1)$ independent of
 $h$ such that
\begin{subequations}\label{subeqs:MA}
    \begin{align}
        &\mbox{$D$ is star-shaped with respect to a ball of radius
        $\Theta h_D$, and}  \label{eq:MA1}\\
        &\mbox{$|e| \geq \Theta h_D$ for any edge $e \in \ED$.}
        \label{eq:MA2}
        \end{align}
\end{subequations}
\par
 The local enhanced virtual element space $V_h^k(D) \subset H^1(D)$
  (cf. \cite{AABMR:2013:Projector}) is defined as follows:
\begin{align*}
    V_h^k(D) &= \{ v_h \in H^1(D): v_h|_{\p D} \in \Poly_{k}(\p D), \;\;
    \Delta v_h \in \Poly_{k}(D), \\
     &\hspace{60pt} \Pi^0_{k,D} v_h - \Pi^{1}_{k,D} v_h \in
     \Poly_{k-2}(D)\}.     \notag
\end{align*}
 Here $\Poly_k(\p D)$ is the space of continuous piecewise polynomials
 of degree at most $k$ on $\p D$
 and $\Poly_k(D)$ is the space of polynomials of degree at most $k$ on $D$.
   The operator $\Pi_{k,D}^0$
  is the orthogonal projection from $\LT$ onto $\Poly_k(D)$, and the operator
   $\Pi_{k,D}^1$ is the orthogonal
  projection from $H^1(D)$ onto $\Poly_k(D)$ with respect to the bilinear form
\begin{align*}
  (\!(\zeta,\eta)\!)&=(\bcurl\zeta,\bcurl\eta)_\LTD+
     \Big(\int_{\p D}\zeta\,ds\Big)\Big(\int_{\p D}\eta\,ds\Big)\\
     &=(\grad\zeta,\grad\eta)_\LTD+\Big(\int_{\p D}\zeta\,ds\Big)
     \Big(\int_{\p D}\eta\,ds\Big).
\end{align*}
\par
 We will restrict $k$ to be $1$ or $2$ (cf. Remark~\ref{rem:Limit}).
   For $k=1$, the degrees
 of freedom (dofs) of $v_h\in V_h(D)$
 consist of the values of $v_h$ at the vertices of $D$.  For $k=2$,
 the dofs of $v_h\in V_h(D)$
 consist of the values of $v_h$ at the vertices of $D$, the values
 of $v_h$ at the midpoints of the edges in $\ED$, and
 $(v_h,1)_\LTD$.
\begin{remark} \label{rem:LTProjection}
  Both $\Pi_{k,D}^1 v_h$ and $\Pi_{k,D}^0 v_h$ are computable in terms
  of the degrees of freedom
  of $v_h\in V_h(D)$ (cf. \cite{AABMR:2013:Projector,BGS:2017:VEM2}).
\end{remark}
\par
 The global virtual element spaces $V_h^k$ and $\zV_h^k$ are defined by
 \begin{align*}
     V_h^k = \{ v_h \in H^1(\Omega) : v_h|_D \in V_h(D)
     \text{ for all } D \in \Th \}, \\
    \zV_h^k = \{ v_h \in H_0^1(\Omega) : v_h|_D \in V_h(D)
    \text{ for all } D \in \Th \}.
 \end{align*}
\par
  Let $\Poly_k(\O;\cT_h)$ be the space of piecewise polynomials of
  degree at most $k$ with respect to $\cT_h$.
 The global version of $\Pi_{k,D}^0$ is the operator
 $\PiZ:\LT\rightarrow\Poly_k(\O;\cT_h)$ defined by
\begin{alignat*}{3}
 (\PiZ \zeta)\big|_D&=\Pi_{k,D}^0(\zeta\big|_D) &\qquad&\forall\,\zeta\in\LT,
  \intertext{and the global version of $\Pi_{k,D}^1$ is the operator
  $\PiO:\HOne\longrightarrow\Poly_k(\O;\cT_h)$ defined by}
 (\PiO \zeta)\big|_D&=\Pi_{k,D}^1(\zeta\big|_D) &\qquad&\forall\,\zeta\in\HOne.
\end{alignat*}
\begin{remark}\label{rem:OtherNotation}
  Another notation for $\Pi_{k,D}^1$  (resp., $\PiO$) is
  $\Pi_{k,D}^{\nabla}$ (resp., $\Pi_{k,h}^{\nabla}$).
\end{remark}
%
\subsection{Virtual Element Methods for \eqref{eq:QuadCurl}
 in the Case  $\gamma=0$} \label{subsec:VEMzero}
 According to the assumption on $\gamma$, this means $\O$ is simply
 connected, i.e., $m=0$,
  and the Hodge decomposition of $\bu$ is simplified to
\begin{equation}\label{eq:SimplifiedHodge}
  \bu=\bcurl\phi.
\end{equation}
%
\par
 First we discretize the boundary value problems in
 Section~\ref{subsubsec:gammaZero} as follows.
\begin{itemize}\itemsep 4pt
  \item  Find $\rho_h\in V_h^k$ such that
 \begin{equation}\label{eq:rhoh}
  a_h(\rho_h, \psi_h) + (\rho_h,1)_\LT(\psi_h,1)_\LT
   = (\bof, \bcurl \Pi^{1}_{k,h} \psi_h)_\LT \quad \forall \psi_h \in V_h^k,
 \end{equation}
 where the bilinear form $a_h(\cdot,\cdot)$ is defined by
\begin{align}\label{eq:ah}
       &\,a_h(w_h, v_h) \notag\\
     =&\, \sum_{D \in \Th}
     (\bcurl \Pi^{1}_{k,D} w_h, \bcurl \Pi^{1}_{k,D} v_h)_\LTD
      + S^D\big((\Id - \Pi^{1}_{k,D}) w_h, (\Id - \Pi^{1}_{k,D}) v_h\big)\\
      =&\, \sum_{D \in \Th}
       (\grad \Pi^{1}_{k,D} w_h, \grad \Pi^{1}_{k,D} v_h)_\LTD
      + S^D\big((\Id - \Pi^{1}_{k,D}) w_h, (\Id - \Pi^{1}_{k,D}) v_h\big).\notag
\end{align}
 Here $\Id$ is the identity operator and the symmetric positive definite
  bilinear form $S^D(\cdot,\cdot)$ is given by
\begin{equation*}
  S^D(w,v)=\sum_{p\in \cN_{\p D}} w(p)v(p),
\end{equation*}
 where $\cN_{\p D}$ is the set of nodes on $\p D$ associated with the
 dofs of $V_h^k(D)$, i.e.,
 the set of vertices of $D$  if $k=1$ and the set of vertices of $D$
 together with the
 midpoints of the edges in $\ED$ if $k=2$.
 \par\noindent
 Note that \eqref{eq:rhoh} implies
\begin{equation}\label{eq:rhohZeroMean}
 (\rho_h,1)_\LT=0.
\end{equation}
\item Find $\xi_h\in \zV_h^k$ given by
\begin{alignat}{3}
&\xi_h=\xi_{0,h}-[(1,\xi_{0,h})_\LT/(1,\xi_{1,h})_\LT]\xi_{1,h},
\label{eq:xih}
  \intertext{where $\xi_{0,h},\xi_{1,h}\in \zV_h^k$ are determined by}
  &a_h(\xi_{0,h},\eta_h)+\beta(\PiZ\xi_{0,h},\PiZ\eta_h)_\LT
  =(\PiZ\rho_h,\PiZ\eta_h)_\LT&\qquad&\forall\,\eta_h\in \zV_h^k,\label{eq:xi0h}\\
    &a_h(\xi_{1,h},\eta_h)+\beta(\PiZ\xi_{1,h},\PiZ\eta_h)_\LT
    =(1,\PiZ\eta_h)_\LT&\qquad&\forall\,\eta_h\in \zV_h^k.\label{eq:xi1h}
\end{alignat}
\end{itemize}
\par
 Next we find $\phi_h\in V_h^k$ such that
\begin{equation}\label{eq:phih}
 a_h(\phi_h,\psi_h)+(\phi_h,1)_\LT(\psi_h,1)_\LT
  =(\PiZ\xi_h,\PiZ\psi_h)_\LT   \qquad\forall\,\psi_h\in V_h^k.
\end{equation}
\par
 In view of \eqref{eq:SimplifiedHodge}, the approximation of $\bu$ is
 then given by
\begin{equation}\label{eq:uh1}
\bu_h=\bcurl(\PiO\phi_h).
\end{equation}

\subsection{Virtual Element Methods for \eqref{eq:QuadCurl}
 in the Case  $\gamma>0$}
 \label{subsec:VEMpositive}
  First we discretize the boundary value problem in
  Section~\ref{subsubsec:gammaPositive} as follows.
\par
 Find $(\zeta_h,\xi_h)\in V_h^k\times \zV_h^k$ given by
\begin{alignat}{3}
&(\zeta_h,\xi_h)=(\zeta_{0,h},\xi_{0,h})-
  [(1,\xi_{0,h})_\LT/(1,\xi_{1,h})_\LT](\zeta_{1,h},\xi_{1,h}),\label{eq:zetahxih}
\intertext{where $(\zeta_{0,h},\xi_{0,h}), (\zeta_{1,h},\xi_{1,h})\in
V_h^k\times \zV_h^k$ are determined by}
&\cA_h\big((\zeta_{0,h},\xi_{0,h}),(\psi_h,\eta_h)\big)
  +(\zeta_{0,h},1)_\LT ( \psi_{h},1)_\LT=
  \gamma^{-\frac12}(\bof,\bcurl\PiO\psi_h)_\LT,
  \label{eq:zetahxih0}     \\
&\cA_h\big((\zeta_{1,h},\xi_{1,h}),(\psi_h,\eta_h)\big)
  +(\zeta_{1,h},1)_\LT ( \psi_{h},1)_\LT=(1,\PiZ\eta_h)_\LT,
  \label{eq:zetahxih1}
\end{alignat}
 for all $(\psi_h,\eta_h)\in V_h^k\times \zV_h^k$,
 and the bilinear form $\cA_h(\cdot,\cdot)$ is given by
\begin{align}\label{eq:cAh}
 &\cA_h\big((\sigma_h,\mu_h),(\psi_h,\eta_h)\big)
  =a_h(\sigma_h,\psi_h)+\gamma^\frac12(\PiZ\psi_h,\PiZ\mu_h)_\LT \\
  &\hspace{50pt}-\gamma^\frac12(\PiZ\sigma_h,\PiZ\eta_h)_\LT
  +a_h(\mu_h,\eta_h)
 +\beta(\PiZ\mu_h,\PiZ\eta_h)_\LT.\notag
\end{align}
\par
 Next we determine $\phi_h$ by \eqref{eq:phih}.
 If $\O$ is simply connected (i.e., $m=0$), then the approximation
 of $\bu$ is given by  \eqref{eq:uh1}.
\par
 If $\O$ is not simply connected (i.e., $m\geq1$), we compute
 $\varphi_{j,h}\in V_h^k$ $(1\leq j\leq m$)
 such that
\begin{subequations}\label{subeqs:DHarmonicFunctions}
\begin{alignat}{3}
  a_h(\varphi_{j,h},v_h)&=0&\qquad&\forall\,v_h\in \zV_h^k,
  \label{eq:DHarmonic}   \\
  \varphi_{j,h}\Big|_{\G_0}&=0,\label{eq:DOuterBoundary}\\
  \varphi_{j,h}\Big|_{\G_k}&=\begin{cases}
    1 &\qquad\text{if $k=j$}\\[4pt]
    0&\qquad\text{if $k\neq j$}
  \end{cases}
  &\qquad&\text{for}\;1\leq j,k\leq m,
  \label{eq:DInnerBoundary}
\end{alignat}
\end{subequations}
 and we find $c_{1,h}, \ldots,c_{m,h}\in\R$ by
 solving the SPD system
\begin{equation}\label{eq:DCoefficients}
 \sum_{j=1}^m a_h(\varphi_{i,h},\varphi_{j,h})c_{j,h}
  =\gamma^{-1}(\bof,\grad \PiO\varphi_{i,h})_\LT
  \qquad\text{for}\quad 1\leq i\leq m.
\end{equation}
\par
 The approximation of $\bu$ is then given by
\begin{equation}\label{eq:uh2}
  \bu_h=\bcurl(\PiO\phi_h)+\sum_{j=1}^m c_{j,h}\,\grad(\PiO\varphi_{j,h}).
\end{equation}
\subsection{Properties of Virtual Elements}\label{subsec:VEMProperties}
 We recall some known properties of the virtual element methods in this section.
 The derivations of these properties under the shape regularity assumption
 \eqref{subeqs:MA} can be found for example in \cite{BGS:2017:VEM2,BSung:2018:VEM}.
\begin{remark}\label{rem:SobolevSpaces}
  The results in \cite{BGS:2017:VEM2,BSung:2018:VEM} are based on the
  Bramble-Hilbert lemma in \cite{BH:1970:Lemma} for Sobolev spaces of integer orders.
  They can be extended to fractional order Sobolev spaces in a straightforward way
   by using the version of the Bramble-Hilbert lemma in \cite{DS:1980:BH}.
\end{remark}

\subsubsection{Properties of $\Pi_{k,D}^1$}\label{subsubsec:PiO}
 We have (by the definition of $\PiO$)
\begin{alignat}{3}\label{eq:PiOProjection}
 (\grad \zeta,\grad q)_\LTD&=(\bcurl\zeta,\bcurl q)_\LTD\notag\\
             &=(\bcurl\Pi_{k,D}^1\zeta,\bcurl q)_\LTD\\
             &=(\grad\Pi_{k,D}^1\zeta,\grad q)_\LT&\qquad&\forall\,
             \zeta\in H^1(D),\;q\in \Poly_k(D),\notag
\end{alignat}
\begin{alignat}{3}\label{eq:PiOPythagoras}
   \|\grad\zeta\|_\LTD^2&=\|\grad \Pi_{k,D}^1 \zeta\|_\LTD^2+
      \|\grad(\Id-\Pi_{k,D}^1)\zeta)\|_\LTD^2\notag\\
 &=\|\bcurl \Pi_{k,D}^1\zeta\|_\LTD^2+\|\bcurl(\Id-\Pi_{k,D}^1)\zeta)\|_\LTD^2\\
 &=\|\bcurl\zeta\|_\LTD^2 &\qquad&\forall\,\zeta \in H^1(D), \notag
\end{alignat}
 and (cf. \cite[Section~3.5]{BSung:2018:VEM})
\begin{equation}\label{eq:PiOErrorEstimate}
  |\zeta-\Pi_{k,D}^1\zeta|_{H^{1+t}(D)}\leq C_{s,t} h_D^{s-t} |\zeta|_{H^{1+s}(D)}
  \qquad\forall\,\zeta\in H^{1+s}(D),\;0\leq t<s\leq k.
\end{equation}

\subsubsection{Properties of  $S^D(\cdot,\cdot)$}\label{subsubsec:SD}
 We have, by \cite[Lemma~3.11 and Lemma~3.18]{BSung:2018:VEM}
  and also \cite[Lemma~2.11 and Lemma~2.22]{BGS:2017:VEM2},
\begin{equation}\label{eq:SDBound}
 C_\flat|v_h-\Pi_{k,D}^1 v_h|_\HOneD^2\leq  S^D\big((\Id-\Pi_{k,D}^1)
 v_h,(\Id-\Pi_{k,D}^1) v_h\big)
 \leq C_\sharp |v_h-\Pi_{k,D}^1 v_h|_\HOneD^2
\end{equation}
 for all $v_h\in V_h^k(D)$.

\subsubsection{Interpolation Error Estimates}\label{subsubsec:Interpolation}
 Let $\lambda$ belong to $H^{1+s}(D)$ for some $s>0$. The interpolant
 $I_{k,D}\lambda\in V_h^k(D)$
 is defined by the condition that $\lambda$ and $I_{k,D}\lambda$ share
 the same dofs of $V_h^k(D)$.
    We have (cf. \cite[Section~3.8]{BSung:2018:VEM})
\begin{align}
 &|\lambda-I_{k,D}\lambda|_\HOneD+|\lambda-\Pi_{k,D}^1 I_{k,D}\lambda|_\HOneD
    \leq C_s h^{\min(s,k)}|\lambda|_{H^{1+s}(D)},
 \label{eq:InterpolationOne} \\
 &\|\lambda-I_{k,D}\lambda\|_\LTD+\|\lambda- \Pi_{k,D}^1 I_{k,D}\lambda\|_\LTD
 \label{eq:InterpolationZero}\\
     &\hspace{70pt}+\|\lambda-\Pi_{k,D}^0 I_{k,D}\lambda\|_\LTD
     \leq C_s h^{1+\min(s,k)}|\lambda|_{H^{1+s}(D)}.
   \notag
 \end{align}
\subsubsection{A Coercivity Estimate}\label{subsubsec:Coercivity}
 It follows from \eqref{eq:ah}, \eqref{eq:PiOPythagoras}
 and \eqref{eq:SDBound}
 that
\begin{equation} \label{eq:Coercivity}
 \|\grad v_h\|_\LT^2=\|\bcurl v_h\|_\LT^2\leq \Cc\, a_h(v_h,v_h)
 \qquad\forall\,v_h\in V_h^k.
\end{equation}
%
\subsubsection{An Inconsistency Estimate}\label{subsubsec:Inconsistency}
 Given any $v_h,w_h\in V_h^k$, we have an identity
\begin{align}\label{eq:Identity}
  &\hspace{6pt} a_h(v_h,w_h)-(\bcurl v_h,\bcurl w_h)_\LT\\
  =&\,\sum_{D\in\cT_h}\big[-\big(\bcurl v_h,\bcurl(\Id-\Pi_{k,D}^1)w_h\big)_\LTD
  +S^D\big((\Id-\Pi_{k,D}^1)v_h,(\Id-\Pi_{k,D}^1)w_h\big)\big]
  \notag
\end{align}
 that follows from \eqref{eq:ah} and \eqref{eq:PiOProjection}, which together with
  \eqref{eq:PiOPythagoras} and \eqref{eq:SDBound}
 implies
\begin{equation}\label{eq:Inconsistency}
  \big| a_h(v_h,w_h)-(\bcurl v_h,\bcurl w_h)_\LT|\leq
  C_{\rm inc}| v_h|_\HOne|(\Id-\PiO) w_h|_{h,1},
\end{equation}
where $|\cdot|_{h,1}$ is the piecewise $H^1$ norm defined by
\begin{equation*}
 |v|_{h,1}^2=\sum_{D\in\cT_h}|v|_{H^1(D)}^2 \;(=|v|_\HOne^2 \;\text{for $v\in\HOne$}).
\end{equation*}
\begin{remark}\label{rem:Grad}
  The identity \eqref{eq:Identity} and the estimate \eqref{eq:Inconsistency}
  remain valid if $\bcurl$ is replaced by $\grad$.
\end{remark}
\section{Error Analysis of $\xi_h$}\label{sec:xihError}
 The error estimate for $\bu_h$ involves error estimates for $\xi_h$ and $\phi_h$, and
 also error estimates  for $\varphi_{j,h}$ and $c_{j,h}$ if $m\geq 1$.
 We will carry out the error analysis of $\xi_h$ in this section.
 Below the interpolation operator $I_{k,h}:H^{1+s}(\O)\longrightarrow V_h^k$ ($s>0$)
  is the global version of $I_{k,D}$, i.e.,
\begin{equation*}
  (I_{k,h}\lambda)\big|_D=I_{k,D}(\lambda\big|_D) \qquad \forall\,D\in\cT_h.
\end{equation*}
\par
 Our goal is to establish the following result where $\xi=\curl\bu$.
\begin{theorem}\label{thm:xihError}
 Let $\omega$ be the largest interior angle of $\O$.  We have
\begin{subequations}\label{eq:xihError}
\begin{align}
 |\xi-\xi_h|_\HOne+|\xi-\PiO\xi_h|_{h,1}&\leq \CO \hfactorG\|\bof\|_\LT
  \label{eq:xihErrorG}
  \intertext{if $(\pi/\omega)$ is not an integer, and}
   |\xi-\xi_h|_\HOne+|\xi-\PiO\xi_h|_{h,1}&\leq \COE \hfactorE\|\bof\|_\LT
   \label{eq:xihErrorE}
\end{align}
 if $(\pi/\omega)$ is an integer.
\end{subequations}
\end{theorem}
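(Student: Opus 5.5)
We will prove Theorem~\ref{thm:xihError} by comparing each discrete building block with its continuous counterpart, using the representations \eqref{eq:xiRepresentation}/\eqref{eq:xih} when $\gamma=0$ and \eqref{eq:SystemRepresentation}/\eqref{eq:zetahxih} when $\gamma>0$. In both cases, writing $\xi=\xi_0-\kappa\xi_1$ and $\xi_h=\xi_{0,h}-\kappa_h\xi_{1,h}$ with $\kappa=(1,\xi_0)_\LT/(1,\xi_1)_\LT$, we have
\begin{equation*}
 \xi-\xi_h=(\xi_0-\xi_{0,h})-\kappa(\xi_1-\xi_{1,h})-(\kappa-\kappa_h)\xi_{1,h}.
\end{equation*}
It therefore suffices to bound $|\xi_i-\xi_{i,h}|_\HOne+|\xi_i-\PiO\xi_{i,h}|_{h,1}$ by $C\hfactor$ times $\|\bof\|_\LT$ (resp.\ $1$), and to show $|\kappa-\kappa_h|\leq C\hfactor\|\bof\|_\LT$. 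The latter follows from \eqref{eq:PFDirichlet}, from $(1,\xi_1)_\LT>0$, and from the $L^2$ errors of $\xi_{i,h}$ being bounded by their $H^1$ errors. Estimates \eqref{eq:xi0Bdd}, \eqref{eq:xi1Bdd}, \eqref{eq:zeta0xi0Bdd} and \eqref{eq:zeta1xi1Bdd} control $\kappa$ and the discrete norms. The $\PiO$ term follows from the $\HOne$ term, using \eqref{eq:PiOErrorEstimate}, the triangle inequality $|\xi-\PiO\xi_h|_{h,1}\leq|\xi-\PiO\xi|_{h,1}+|\PiO(\xi-\xi_h)|_{h,1}$, and \eqref{eq:PiOPythagoras}.

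\textbf{Regularity.} We first need $\xi_0,\xi_1\in\RegStar$, together with $\zeta_0,\zeta_1$ in the same space when $\gamma>0$, with norms bounded by $C\|\bof\|_\LT$ (resp.\ $C$). For $\xi_i$ this follows from Dirichlet elliptic regularity on polygons \cite{Grisvard:1985:EPN,Dauge:1988:EBV}, since the right-hand sides $\rho$ and $1$ lie in $\LT$. When $\gamma>0$, $\zeta_i$ solves a Neumann problem with data of the form $\gamma^{1/2}\xi_i$, plus the $\bof$ term. Here there is a difficulty: $(\bof,\bcurl\psi)$ gives only $H^1$ regularity for $\zeta_0$. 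We handle it by keeping $\zeta_0$ only in $H^1$ and noting that $\xi_0$ sees $\zeta_0$ only through an $L^2$ term. For $\gamma=0$, the error of $\rho_h$ in $L^2$ is what matters.

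\textbf{Discrete error estimates.} For the Dirichlet problems \eqref{eq:xi0h}--\eqref{eq:xi1h}, we use the standard Strang-type argument. Take $e_h=I_{k,h}\xi_i-\xi_{i,h}$. By \eqref{eq:Coercivity}, $|e_h|_\HOne^2\leq\Cc\,a_h(e_h,e_h)$. We expand $a_h(I_{k,h}\xi_i,e_h)-a_h(\xi_{i,h},e_h)$ using the continuous equation, \eqref{eq:Inconsistency}, the interpolation estimates \eqref{eq:InterpolationOne}--\eqref{eq:InterpolationZero}, and the $\PiZ$-consistency of the load term; this part is Appendix~\ref{append:VEMError}. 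For $\xi_0$ the load is $\PiZ\rho_h$ rather than $\rho$, so we also need $\|\rho-\rho_h\|_\LT\leq C\hfactor\|\bof\|_\LT$. We obtain it from an $H^1$ estimate for the Neumann problem \eqref{eq:rhoh}: $\rho$ is only in $H^1$, but $\|\bcurl\rho\|\le\|\bof\|$. We then apply a duality (Aubin--Nitsche) argument, whose dual solution lies in $\RegStar$, giving $\|\rho-\rho_h\|_\LT\leq Ch^{\min(\ExpoStar,k)}\|\bof\|_\LT$. The argument must avoid needing regularity of $\rho$: the load consistency term $(\bof,\bcurl(\psi-\PiO\psi_h))$, with $\psi$ the dual solution, is handled by \eqref{eq:PiOErrorEstimate}.

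\textbf{Coupled case and main obstacle.} For $\gamma>0$, the analogous argument applies to the coupled system \eqref{eq:zetahxih0}--\eqref{eq:zetahxih1}, using the coercivity of $\cA_h$. The skew terms cancel on the diagonal, so $\cA_h(w,w)\geq\Cc^{-1}(|w_1|^2+|w_2|^2)$ plus the mean term. The error $\zeta_0-\zeta_{0,h}$ again needs duality to be bounded in $L^2$, since only an $H^1$ bound of order $O(1)$ is available a priori. The main obstacle is this low-regularity coupling: obtaining the full rate for $\xi$ even though $\rho$ (or $\zeta_0$) is only $H^1$. We will attempt it by estimating $\xi_0-\xi_{0,h}$ with $\rho-\rho_h$ measured only in $L^2$ via the dual problem, which yields the factor $\hfactor$.
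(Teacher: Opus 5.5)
Your overall architecture is the paper's: you split $\xi-\xi_h$ through $\kappa$ and $\kappa_h$, handle $\xi_{1,h}$ and the $\PiO$ term by standard arguments, and control the data error $\rho-\rho_h$ (resp.\ $\zeta_0-\zeta_{0,h}$) by duality. The gap is that the intermediate estimate you aim for is false in general. You want $\|\rho-\rho_h\|_\LT\leq C\hfactor\|\bof\|_\LT$. Since $\bof$ is only in $\LTT$, $\rho$ is in general only in $\HOne$. For $\gamma=0$, every $\psi\in\HOne\cap\LTz$ arises as $\rho$ with $\bof=\bcurl\psi$ and $\|\bof\|_\LT=|\psi|_\HOne$. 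Approximating such functions in $\LT$ by a virtual element space on a mesh of size $h$ is in general no better than $O(h)$. Hence your bound cannot hold when $\min(\ExpoStar,k)>1$, i.e.\ for $k=2$ on any convex polygon, such as the unit square of Experiments~1 and~2.

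The error sits in your duality step. With data $\chi\in\LT$, the dual Neumann solution lies only in $H^{1+\min(\ExpoStar,1)}(\O)$ with bound $C\|\chi\|_\LT$, not in $\RegStar$. Combined with the mere $O(1)$ bound on $|\rho-\rho_h|_\HOne$, this yields only $h^{\min(\ExpoStar,1)}$. The same objection applies to your $\LT$ bound for $\zeta_0-\zeta_{0,h}$ when $\gamma>0$, since $\zeta_0=\gamma^{-1/2}\rho$. So your argument can deliver the theorem only when $\min(\ExpoStar,k)\leq 1$.

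The missing idea is to measure $\rho-\rho_h$ in the dual norm of $\HOne$ instead of in $\LT$. The $\xi_0$-error sees $\rho-\rho_h$ only through $(\rho-\rho_h,\eta)_\LT$ with $\eta\in\HOnez$. In your argument $\eta$ is $e_h=I_{k,h}\xi_0-\xi_{0,h}$; in the paper it is $\xi_0-\tilde\xi_0$, where $\tilde\xi_0$ solves the Dirichlet problem with load $\rho_h$. So what you need is Lemma~\ref{lem:Duality}: $|(\rho-\rho_h,\chi)_\LT|\leq\CExpo\hfactor\|\bof\|_\LT\|\chi\|_\HOne$ for all $\chi\in\HOne$. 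Posing the dual Neumann problem with data $\chi\in\HOne$ gives $\|\lambda\|_\RegStar\leq\CExpo\|\chi\|_\HOne$. Then exactly the ingredients you list give the full rate $\hfactor$: the interpolant $I_{k,h}\lambda$, the consistency term $(\bof,\bcurl(I_{k,h}\lambda-\PiO I_{k,h}\lambda))_\LT$, \eqref{eq:Inconsistency}, and $|\rho-\rho_h|_\HOne\leq C\|\bof\|_\LT$.

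With this bound your direct Strang argument also closes. The load difference is $(\rho-\rho_h,e_h)_\LT+(\rho_h-\PiZ\rho_h,e_h-\PiZ e_h)_\LT$, and the second term is $O(h^2)\|\bof\|_\LT|e_h|_\HOne$. The paper instead passes through $\tilde\xi_0$ and Appendix~\ref{append:VEMError}, which is equivalent. For $\gamma>0$ you need the coupled analogue with $\HOne$ data, as in Lemma~\ref{lem:SystemDuality}. Relatedly, when $\ExpoStar>1$, getting $\xi_0,\xi_1\in\RegStar$ uses that the loads $\rho$ and $1$ lie in $\HOne$, not merely in $\LT$ as you state.
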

\begin{remark}\label{rem:Consequence}
 Note that \eqref{eq:PFDirichlet}, \eqref{eq:xiBddZero}
 (for the case where $\gamma=0$),
  \eqref{eq:xiBddPositive} (for the case where $\gamma>0$)
  and \eqref{eq:xihError}  imply
\begin{equation} \label{eq:xihBdd}
  \|\xi_h\|_\HOne\leq C\|\bof\|_\LT.
\end{equation}
\end{remark}
\par
 We will prove Theorem~\ref{thm:xihError} for the two cases
 $\gamma=0$ and $\gamma>0$ separately, where
  we adopt the notation of $\ExpoStar$ and $\CExpo$ in
 Remark~\ref{rem:RegularityNotation}.
%
\subsection{The Case Where $\gamma=0$}\label{subsec:xihErrorZero}
 First we establish a duality estimate for $\rho_h\in V_h^k$ defined
 by \eqref{eq:rhoh}.
%
\subsubsection{A Duality Estimate} \label{subsub:DualityZero}
\begin{lemma}\label{lem:Duality}
 We have
\begin{equation}\label{eq:Duality}
 |(\rho-\rho_h,\chi)_\LT|\leq \CExpo \hfactor \|\bof\|_\LT \|\chi\|_\HOne
  \qquad\forall\,\chi\in \HOne.
\end{equation}
\end{lemma}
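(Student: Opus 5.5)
We argue by duality (Aubin--Nitsche) for the Neumann problem \eqref{eq:rhoEquation}. Fix $\chi\in\HOne$. Since $(\rho,1)_\LT=0$ and, by \eqref{eq:rhohZeroMean}, $(\rho_h,1)_\LT=0$, we may replace $\chi$ by $\chi-\bar\chi$, where $\bar\chi$ is the mean of $\chi$. Thus it suffices to treat $\chi\in\HOne\cap\LTz$; then \eqref{eq:PFNeumann} gives $\|\chi\|_\LT\le C|\chi|_\HOne$.

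\emph{Dual problem.} Let $\lambda\in\HOne$ solve
\begin{equation*}
(\bcurl\lambda,\bcurl\psi)_\LT+(\lambda,1)_\LT(\psi,1)_\LT=(\chi,\psi)_\LT \qquad\forall\,\psi\in\HOne.
\end{equation*}
Taking $\psi=1$ gives $(\lambda,1)_\LT^2=(\chi,1)_\LT$, which vanishes, so $(\lambda,1)_\LT=0$. Hence $\lambda$ solves the Neumann problem $-\Delta\lambda=\chi$ with a right-hand side in $\HOne$. Elliptic regularity on polygons (\cite{Grisvard:1985:EPN,Dauge:1988:EBV}) then gives
\begin{equation*}
\|\lambda\|_{H^{1+\ExpoStar}(\O)}\le \CExpo\|\chi\|_\HOne .
\end{equation*}
Here one must check that for the Neumann problem the singular exponents are also $j\pi/\omega$, and that a right-hand side in $\HOne$ does not limit regularity below $1+\ExpoStar$ since $\ExpoStar<2$ whenever $\omega>\pi/2$. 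For convex domains and $k\le 2$ one simply takes regularity up to $\min(\ExpoStar,k)$.

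\emph{Error representation.} Put $\lambda_I=I_{k,h}\lambda$. Then
\begin{equation*}
(\rho-\rho_h,\chi)_\LT=(\bcurl(\rho-\rho_h),\bcurl\lambda)_\LT .
\end{equation*}
We split this as
\begin{equation*}
(\bcurl(\rho-\rho_h),\bcurl(\lambda-\lambda_I))_\LT+(\bcurl\rho,\bcurl\lambda_I)_\LT-(\bcurl\rho_h,\bcurl\lambda_I)_\LT .
\end{equation*}

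\emph{Estimating the terms.} The first term is bounded by $|\rho-\rho_h|_\HOne\,|\lambda-\lambda_I|_\HOne$. For this we need the energy estimate $|\rho-\rho_h|_\HOne\le C\|\bof\|_\LT$. It follows from Strang-type arguments: coercivity \eqref{eq:Coercivity}, \eqref{eq:rhoBdd}, and the stability of $\PiO$ from \eqref{eq:PiOPythagoras}. Combined with \eqref{eq:InterpolationOne}, the first term is $\le C h^{\min(\ExpoStar,k)}\|\bof\|_\LT\|\chi\|_\HOne$.

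For the last two terms we use \eqref{eq:rhoEquation} and \eqref{eq:rhoh}, with all mean terms vanishing. This gives $(\bcurl\rho,\bcurl\lambda_I)=(\bof,\curl\lambda_I)\cdot$ in the form $(\bof,\bcurl\lambda_I)_\LT$, and $a_h(\rho_h,\lambda_I)=(\bof,\bcurl\PiO\lambda_I)_\LT-(\rho_h,1)(\lambda_I,1)=(\bof,\bcurl\PiO\lambda_I)_\LT$. Hence the last two terms equal
\begin{equation*}
(\bof,\bcurl(\Id-\PiO)\lambda_I)_\LT+\big[a_h(\rho_h,\lambda_I)-(\bcurl\rho_h,\bcurl\lambda_I)_\LT\big].
\end{equation*}
The first piece is bounded by $\|\bof\|_\LT|(\Id-\PiO)\lambda_I|_{h,1}$. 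The second is bounded by \eqref{eq:Inconsistency} as $C|\rho_h|_\HOne|(\Id-\PiO)\lambda_I|_{h,1}$, and $|\rho_h|_\HOne\le C\|\bof\|_\LT$ by coercivity. Finally,
\begin{equation*}
|(\Id-\PiO)\lambda_I|_{h,1}\le|\lambda-\lambda_I|_\HOne+|\lambda-\PiO\lambda_I|_{h,1}\le C h^{\min(\ExpoStar,k)}\|\lambda\|_{H^{1+\ExpoStar}(\O)}
\end{equation*}
by \eqref{eq:InterpolationOne}.

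Collecting the three bounds gives \eqref{eq:Duality}. The main obstacle is the regularity step: justifying the $H^{1+\ExpoStar}$ bound for the Neumann dual problem on a possibly multiply connected polygon, uniformly in $\chi\in\HOne$, with the exceptional integer case absorbed into $\ExpoStar$. Everything else is routine virtual element consistency.
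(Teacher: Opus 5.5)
Your proposal is correct and follows the paper's proof step by step. You use the same Neumann dual problem for $\lambda$, the same splitting through $I_{k,h}\lambda$, the same rewriting of $(\bcurl(\rho-\rho_h),\bcurl I_{k,h}\lambda)_\LT$ via \eqref{eq:rhoEquation} and \eqref{eq:rhoh}, and the same bounds from \eqref{eq:Inconsistency}, \eqref{eq:InterpolationOne} and $|\rho_h|_\HOne\le C\|\bof\|_\LT$; your caveat that only $H^{1+\min(\ExpoStar,k)}$ regularity is needed when $\omega<\pi/2$ is a sensible refinement of \eqref{eq:Duality4}.

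Minor points: the reduction to mean-zero $\chi$ is unnecessary, because $(\rho-\rho_h,1)_\LT=0$ already kills $(\rho-\rho_h,1)_\LT(\lambda,1)_\LT$. The bound $|\rho-\rho_h|_\HOne\le C\|\bof\|_\LT$ is just the triangle inequality with \eqref{eq:rhoBdd} and \eqref{eq:Duality2}, not a Strang argument. Testing with $\psi=1$ gives $|\O|\,(\lambda,1)_\LT=(\chi,1)_\LT$, not $(\lambda,1)_\LT^2=(\chi,1)_\LT$, though $(\lambda,1)_\LT=0$ still follows.
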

\begin{proof}
 From  \eqref{eq:rhoh}, \eqref{eq:rhohZeroMean}, \eqref{eq:PiOPythagoras} and
 \eqref{eq:Coercivity} we have
\begin{align*}
  \|\bcurl\rho_h\|_\LT^2&\leq \Cc a_h(\rho_h,\rho_h) \\
 & = \Cc(\bof,\bcurl\PiO\rho_h)_\LT\\
  &\leq \Cc\|\bof\|_\LT\|\bcurl \PiO\rho_h\|_\LT
  \leq \Cc\|\bof\|_\LT\|\bcurl\rho_h\|_\LT, \end{align*}
 which implies the following analog of \eqref{eq:rhoBdd}:
\begin{equation}\label{eq:Duality2}
  \|\bcurl\rho_h\|_\LT\leq \Cc\|\bof\|_\LT.
\end{equation}
\par
 Let $\chi\in\HOne$ be arbitrary and $\lambda\in\HOne$ be defined by
 the Neumann boundary value problem
\begin{equation}\label{eq:Dulaity3}
 (\bcurl\psi,\bcurl\lambda)_\LT+(\psi,1)_\LT(\lambda,1)_\LT
 =(\psi,\chi)_\LT
 \qquad \forall\,\psi\in\HOne.
\end{equation}
 Then we have
\begin{equation}\label{eq:Duality4}
  \|\lambda\|_\RegStar\leq \CExpo \|\chi\|_\HOne
\end{equation}
  by elliptic regularity (cf. \cite[Section~5.1]{Grisvard:1985:EPN} and
 \cite[Section~2.5]{Dauge:1988:EBV}).
\par
 Since $\rho,\rho_h\in \LTz$, we find from \eqref{eq:Dulaity3} that
\begin{align} \label{eq:Duality5}
  &(\rho-\rho_h,\chi)_\LT=\big(\bcurl\lambda,\bcurl(\rho-\rho_h)\big)_\LT\\
&\hspace{40pt}
=\big(\bcurl(\lambda-I_{k,h}\lambda),\bcurl(\rho-\rho_h)\big)_\LT
       +\big(\bcurl I_{k,h}\lambda,\bcurl(\rho-\rho_h)\big)_\LT,\notag
\end{align}
 and we have
\begin{equation}\label{eq:Dulaity6}
  \big|\big(\bcurl(\lambda-I_{k,h}\lambda),\bcurl(\rho-\rho_h)\big)_\LT\big|
  \leq \CExpo  \hfactor \|\bof\|_\LT \|\chi\|_\HOne
\end{equation}
 by \eqref{eq:rhoBdd}, \eqref{eq:InterpolationOne},  \eqref{eq:Duality2}
 and \eqref{eq:Duality4}.
\par
 Next we analyze the second term on the right-hand side of \eqref{eq:Duality5}.
  We obtain from \eqref{eq:rhoEquation}  and \eqref{eq:rhoh}
  that
\begin{align}\label{eq:Duality7}
   &\big(\bcurl I_{k,h}\lambda,\bcurl(\rho-\rho_h)\big)_\LT
 =(\bof,\bcurl I_{k,h}\lambda)_\LT-(\bcurl\rho_h,\bcurl I_{k,h}\lambda)_\LT\\
  &\hspace{30pt}=\big(\bof,\bcurl(I_{k,h}\lambda-\PiO I_{k,h}\lambda)\big)_\LT
   +a_h(\rho_h,I_{k,h}\lambda)-(\bcurl\rho_h,\bcurl I_{k,h}\lambda)_\LT.\notag
\end{align}
\par
 We can use \eqref{eq:InterpolationOne}, \eqref{eq:Inconsistency},
 \eqref{eq:Duality2} and
  \eqref{eq:Duality4} to estimate the terms on the right-hand side of
  \eqref{eq:Duality7}  as follows:
\begin{align}
 &\big|\big(\bof,\curl(I_{k,h}\lambda-\PiO I_{k,h}\lambda)\big)_\LT|\leq \CExpo \hfactor
 \|\bof\|_\LT\|\chi\|_\HOne,\label{eq:Duality8}\\
 \intertext{and}
 &\big|a_h(\rho_h,I_{k,h}\lambda)-(\curl\rho_h,\curl I_{k,h}\lambda)_\LT\big|
 \label{eq:Duality9}\\
 &\hspace{50pt}
 \leq C_{\rm inc}|\rho_h|_\HOne|(\Id-\PiO)I_{k,h}\lambda|_{h,1}
 \leq \CExpo \hfactor\|\bof\|_\LT \|\chi\|_\HOne.
 \notag
\end{align}
\par
 The estimate \eqref{eq:Duality} follows from \eqref{eq:Duality5}--\eqref{eq:Duality9}.
\end{proof}
%
\subsubsection{Estimate for $\xi_h$}\label{subsubsec:xihEstimateZero}
 We first consider $\xi_{0,h}$.
\par
  Let $\tilde\xi_{0}\in\HOnez$ be defined by the Dirichlet boundary value problem
\begin{equation}\label{eq:ZeroError1}
  (\bcurl\tilde\xi_0,\bcurl\eta)_\LT+\beta(\tilde\xi_0,\eta)_\LT=(\rho_h,\eta)_\LT
   \qquad\forall\,\eta\in\HOnez.
\end{equation}
 Then we have
\begin{equation}\label{eq:ZeroError2}
  \|\tilde\xi_0\|_\RegStar\leq \CExpo
  \|\rho_h\|_\HOne\leq \CExpo\|\bof\|_\LT
\end{equation}
 by elliptic regularity and \eqref{eq:Duality2}.
\par
 Comparing \eqref{eq:xi0} and \eqref{eq:ZeroError1}, we see that
\begin{equation*}
  \big(\bcurl(\xi_0-\tilde\xi_0),\bcurl\eta)_\LT
  +\beta(\xi_0-\tilde\xi_0,\eta)_\LT=(\rho-\rho_h,\eta)_\LT \qquad\forall\,
  \eta \in \HOnez,
\end{equation*}
 which together with \eqref{eq:PFDirichlet} and Lemma~\ref{lem:Duality}
  implies
\begin{align*}
  |\xi_0-\tilde\xi_0|_\HOne^2&=
  \big(\bcurl(\xi_0-\tilde\xi_0),\bcurl(\xi_0-\tilde\xi_0)\big)_\LT\notag\\
    &\leq (\rho-\rho_h,\xi_0-\tilde\xi_0)_\LT  \\
    &\leq \CExpo \hfactor \|\bof\|_\LT |\xi_0-\tilde\xi_0|_\HOne,
\end{align*}
 and hence
\begin{equation}\label{eq:ZeroError3}
  |\xi_0-\tilde\xi_0|_\HOne\leq \CExpo\hfactor \|\bof\|_\LT.
\end{equation}
\par
 On the other hand, $\xi_{0,h}$ defined by \eqref{eq:xi0h} is an approximation of
 $\tilde\xi_0$ generated by a virtual element method for the Dirichlet
 boundary value problem \eqref{eq:ZeroError1}
 whose right-hand side belongs to $\HOne$.  Therefore we have,
 in view of \eqref{eq:ZeroError2},
\begin{equation}\label{eq:ZeroError4}
 |\tilde\xi_0-\xi_{0,h}|_\HOne+  |\tilde\xi_0-\PiO\xi_{0,h}|_{h,1}
 \leq \CExpo\hfactor \|\bof\|_\LT
\end{equation}
 (cf. Appendix~\ref{append:VEMError}).
\par
 It follows from \eqref{eq:ZeroError3} and \eqref{eq:ZeroError4} that
\begin{equation}\label{eq:ZeroError5}
 |\xi_0-\xi_{0,h}|_\HOne+ |\xi_0-\PiO \xi_{0,h}|_{h,1}\leq
 \CExpo\hfactor \|\bof\|_\LT.
\end{equation}
\par
 Next we consider $\xi_{1,h}$.  By comparing \eqref{eq:xi1}
   and \eqref{eq:xi1h}, we see that $\xi_{1,h}$ is
 the approximation of $\xi_1$ obtained by a virtual element method for
 the Dirichlet boundary value problem \eqref{eq:xi1} whose right-hand side belongs
 to $\HOne$.
 Moreover \eqref{eq:xi1} and elliptic regularity imply
\begin{equation*}
  \|\xi_1\|_\RegStar\leq \CExpo,
\end{equation*}
 and hence
\begin{equation}\label{eq:ZeroError6}
  |\xi_1-\xi_{1,h}|_\HOne+|\xi_1-\PiO\xi_{1,h}|_{h,1}\leq \CExpo \hfactor
\end{equation}
 (cf. Appendix~\ref{append:VEMError}).
\par
 The estimate \eqref{eq:xihError}  follows from \eqref{eq:xiRepresentation},
 \eqref{eq:xih}, \eqref{eq:ZeroError5}, \eqref{eq:ZeroError6}     and the fact that
   $$|(1,\xi_0)_\LT|\leq C\|\bof\|_\LT$$
 because of \eqref{eq:xi0Bdd}.
\subsection{The Case Where $\gamma>0$}\label{subsec:xihErrorPositive}
 We begin with duality estimates for $\zeta_{0,h}, \zeta_{1,h}\in V_h^k$ defined by
 \eqref{eq:zetahxih0} and \eqref{eq:zetahxih1}.
%
\subsubsection{Duality Estimates}\label{subsubsec:DualityPositive}
\begin{lemma}\label{lem:SystemDuality}
  We have
\begin{alignat}{3}
  |(\zeta_0-\zeta_{0,h},\chi)_\LT|&\leq \CExpo \hfactor \|\bof\|_\LT\|\chi\|_\HOne
  &\qquad&  \forall\,\chi\in\HOne, \label{eq:SystemDualityZero} \\
  |(\zeta_1-\zeta_{1,h},\chi)_\LT|&\leq \CExpo \hfactor\|\chi\|_\HOne&\qquad&\forall\,
  \chi\in\HOne.\label{eq:SystemDualityOne}
\end{alignat}
\end{lemma}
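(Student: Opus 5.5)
I will follow the proof of Lemma~\ref{lem:Duality}, with the Neumann problem replaced by the coupled system and its adjoint. I treat \eqref{eq:SystemDualityZero} in detail; \eqref{eq:SystemDualityOne} is identical with $\gamma^{-\frac12}(\bof,\bcurl\psi)$ replaced by $(1,\eta)$ and $\|\bof\|_\LT$ by $1$.

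\emph{Step 1 (discrete stability).} Taking $(\psi_h,\eta_h)=(\zeta_{0,h},\xi_{0,h})$ in \eqref{eq:zetahxih0}, the two $\gamma^{\frac12}$ terms in \eqref{eq:cAh} cancel. Using \eqref{eq:Coercivity}, \eqref{eq:PiOPythagoras} and \eqref{eq:PFNeumann}, this gives
\begin{equation*}
\|\zeta_{0,h}\|_\HOne+|\xi_{0,h}|_\HOne\leq C\|\bof\|_\LT,
\end{equation*}
which is the analog of \eqref{eq:zeta0xi0Bdd}. The same argument gives $\|\zeta_{1,h}\|_\HOne+|\xi_{1,h}|_\HOne\leq C$.

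\emph{Step 2 (dual problem).} Given $\chi\in\HOne$, let $(\lambda,\mu)\in\HOne\times\HOnez$ solve the adjoint problem
\begin{equation*}
\cA\big((\psi,\eta),(\lambda,\mu)\big)+(\psi,1)_\LT(\lambda,1)_\LT=(\psi,\chi)_\LT
\end{equation*}
for all $(\psi,\eta)\in\HOne\times\HOnez$. This problem is well-posed by Remark~\ref{rem:BoundedAndCoercive} applied to the transposed form. Its strong form consists of a Neumann problem for $\lambda$ with right-hand side $\chi-\gamma^{\frac12}\mu$ (plus a constant) and a Dirichlet problem for $\mu$ with right-hand side $\gamma^{\frac12}\lambda\in\HOne$. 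Elliptic regularity, applied first to $\mu$ and then to $\lambda$, yields
\begin{equation*}
\|\lambda\|_\RegStar+\|\mu\|_\RegStar\leq\CExpo\|\chi\|_\HOne .
\end{equation*}

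\emph{Step 3 (error identity).} Choose $(\psi,\eta)=(\zeta_0-\zeta_{0,h},\,\xi_0-\xi_{0,h})$ in the dual problem. Then
\begin{equation*}
(\zeta_0-\zeta_{0,h},\chi)_\LT=\cA\big((\zeta_0-\zeta_{0,h},\xi_0-\xi_{0,h}),(\lambda,\mu)\big)+(\zeta_0-\zeta_{0,h},1)_\LT(\lambda,1)_\LT .
\end{equation*}
Split $(\lambda,\mu)=(\lambda-I_{k,h}\lambda,\,\mu-I_{k,h}\mu)+(I_{k,h}\lambda,I_{k,h}\mu)$. Note that $I_{k,h}\mu\in\zV_h^k$ because the dofs are nodal on $\p\O$.
\begin{itemize}
\item The interpolation part is $O(\hfactor)\|\bof\|_\LT\|\chi\|_\HOne$ by boundedness of the form, the stability bounds of Step~1 together with \eqref{eq:zeta0xi0Bdd}, and \eqref{eq:InterpolationOne}, \eqref{eq:InterpolationZero}.
\item For the interpolant part, use \eqref{eq:System0} and \eqref{eq:zetahxih0} with $(I_{k,h}\lambda,I_{k,h}\mu)$ as the test function.
\end{itemize}
The interpolant part then becomes a sum of consistency errors:
\begin{itemize}
\item $\gamma^{-\frac12}\big(\bof,\bcurl(\Id-\PiO)I_{k,h}\lambda\big)$;
\item $a_h-(\bcurl\cdot,\bcurl\cdot)$ differences for $\zeta_{0,h}$ and $\xi_{0,h}$, bounded by \eqref{eq:Inconsistency};
\item differences $(\PiZ\sigma_h,\PiZ\eta_h)-(\sigma_h,\eta_h)$ for the $L^2$ terms.
\end{itemize}
The first two are bounded by $C|(\Id-\PiO)I_{k,h}\lambda|_{h,1}$ and $C|(\Id-\PiO)I_{k,h}\mu|_{h,1}$, both $O(\hfactor)\|\chi\|_\HOne$ by \eqref{eq:InterpolationOne}. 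For the $L^2$ terms I write
\begin{equation*}
(\PiZ\sigma_h,\PiZ\eta_h)-(\sigma_h,\eta_h)=-\big((\Id-\PiZ)\sigma_h,(\Id-\PiZ)\eta_h\big).
\end{equation*}
Then $\|(\Id-\PiZ)v\|_\LT\leq Ch|v|_{h,1}$ for both factors gives $O(h^2)$, which is more than enough.

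\emph{Main obstacle.} The delicate point is that every consistency term must be paired against the smooth dual side $I_{k,h}\lambda$ or $I_{k,h}\mu$, never against the rough discrete solution alone. This requires writing each $\PiZ$ term so that its error falls on a factor that is either discrete-stable (for an $O(h)$ gain) or the interpolant of a regular function. The mean-value term $(\zeta_0-\zeta_{0,h},1)(\lambda,1)$ cancels against the same term in the equations. Collecting all contributions gives \eqref{eq:SystemDualityZero}.
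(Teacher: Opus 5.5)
Your proof is correct and follows the paper's argument essentially step for step. The shared steps are:
\begin{enumerate}
\item discrete stability of $(\zeta_{0,h},\xi_{0,h})$;
\item the adjoint problem $\cA\big((\psi,\eta),(\lambda,\mu)\big)+(\psi,1)_\LT(\lambda,1)_\LT=(\psi,\chi)_\LT$ together with its $H^{1+\ExpoStar}$ regularity;
\item the split of $(\lambda,\mu)$ into the interpolation error plus $(I_{k,h}\lambda,I_{k,h}\mu)$;
\item reduction of the interpolant part to consistency terms controlled by \eqref{eq:Inconsistency} and \eqref{eq:InterpolationOne}.
\end{enumerate}
The only real difference is in the $\PiZ$ terms. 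You use the symmetric identity $-\big((\Id-\PiZ)\sigma_h,(\Id-\PiZ)\eta_h\big)_\LT$ and get $O(h^2)$. The paper writes these terms as $(\PiZ\psi_h-\psi_h,\mu_h)_\LT$, putting the projection error on the interpolant, and gets $O\big(h^{1+\min(\ExpoStar,k)}\big)$ from \eqref{eq:InterpolationZero}. Both bounds suffice because $k\leq 2$. The sign slips in your strong form of the adjoint system have no effect on the regularity argument.
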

\begin{proof} We will focus on \eqref{eq:SystemDualityZero} since the derivation of
\eqref{eq:SystemDualityOne} is very similar.
\par
   From \eqref{eq:zetahxih0}, \eqref{eq:cAh}, \eqref{eq:PiOPythagoras}
    and \eqref{eq:Coercivity},     we have
\begin{align*}
  &(\zeta_{0,h},1)_\LT^2+\|\bcurl\zeta_{0,h}\|_\LT^2+\|\bcurl\xi_{0,h}\|_\LT^2\\
  &\hspace{50pt}\leq
    C   \big[a_h(\zeta_{0,h},\zeta_{0,h})+a_h(\xi_{0,h},\xi_{0,h})
    +(\zeta_{0,h},1)_\LT^2\big] \\
&\hspace{50pt}\leq C\big[\cA\big((\zeta_{0,h},\xi_{0,h}),(\zeta_{0,h},\xi_{0,h})\big)
  +(\zeta_{0,h},1)_\LT(\zeta_{0,h},1)_\LT\big]\\
  &\hspace{50pt}
  \leq C\|\bof\|_\LT\|\bcurl\zeta_{0,h}\|_\LT,
\end{align*}
 which together with \eqref{eq:PFNeumann} and
 \eqref{eq:PFDirichlet} implies the following analog of \eqref{eq:zeta0xi0Bdd}:
\begin{equation}\label{eq:SystemDuality1}
  \|\zeta_{0,h}\|_\HOne+\|\xi_{0,h}\|_\HOne\leq C\|\bof\|_\LT.
\end{equation}
\par
 Let $\chi\in\HOne$ be arbitrary and
 $(\lambda,\mu)\in \HOne\times \HOnez$ be defined by
\begin{equation}\label{eq:SystemDuality2}
  \cA\big((\psi,\eta),(\lambda,\mu)\big)+(\psi,1)_\LT(\lambda,1)_\LT
  =(\psi,\chi)_\LT
  \qquad\forall\,(\psi,\eta)\in\HOne\times\HOnez,
\end{equation}
 or equivalently, in view of \eqref{eq:cA},
\begin{align}
(\bcurl\psi,\bcurl\lambda)_\LT +(\psi,1)_\LT(\lambda,1)_\LT
 &=\gamma^\frac12(\psi,\mu)_\LT+(\psi,\chi)_\LT\label{eq:SystemDuality3}
 \intertext{for all  $\psi\in\HOne$,}
 (\bcurl\eta,\bcurl\mu)_\LT+\beta(\eta,\mu)_\LT&=-\gamma^{-\frac12}(\lambda,\eta)_\LT
 \label{eq:SystemDuality4}
 \end{align}
 for all $\eta\in\HOnez$.
\par
 Note that \eqref{eq:PFNeumann}, \eqref{eq:PFDirichlet},
 \eqref{eq:cA} and \eqref{eq:SystemDuality2} imply
\begin{equation*}
  \|\lambda\|_\HOne+\|\mu\|_\HOne\leq C\|\chi\|_\LT, \end{equation*}
 and hence we have
\begin{equation}\label{eq:SystemDuality5}
  \|\lambda\|_\RegStar+
   \|\mu\|_\RegStar \leq \CExpo \|\chi\|_\HOne
\end{equation}
 by \eqref{eq:SystemDuality3}, \eqref{eq:SystemDuality4}
 and  elliptic regularity.
\par
 The function $\zeta_0-\zeta_{0,h}\in\HOne$ satisfies, because of
 \eqref{eq:SystemDuality2},
\begin{align}\label{eq:SystemDuality6} &\,(\zeta_0-\zeta_{0,h},\chi)_\LT\notag\\
  =&\,\cA\big((\zeta_0-\zeta_{0,h},\xi_0-\xi_{0,h}),
  (\lambda,\mu)\big)+(\zeta_0-\zeta_{0,h},1)_\LT(\lambda,1)_\LT\notag\\
=&\,\cA\big((\zeta_0-\zeta_{0,h},\xi_0-\xi_{0,h}),(\lambda-I_{k,h}\lambda,\mu-I_{k,h}\mu)\big)
     \notag\\
     &\hspace{20pt}+(\zeta_0-\zeta_{0,h},1)_\LT(\lambda-I_{k,h}\lambda,1)_\LT   \\
    &\hspace{40pt}+\cA\big((\zeta_0-\zeta_{0,h},\xi_0-\xi_{0,h}),(I_{k,h}\lambda,I_{k,h}\mu)\big)
    +(\zeta_0-\zeta_{0,h},1)_\LT(I_{k,h}\lambda,1)_\LT, \notag
\end{align}
 and we have
\begin{align}\label{eq:SystemDuality7}
  &\big|\cA\big((\zeta_0-\zeta_{0,h},\xi_0-\xi_{0,h}),(\lambda-I_{k,h}\lambda,\mu-I_{k,h}\mu)\big)
     +(\zeta_0-\zeta_{0,h},1)_\LT(\lambda-I_{k,h}\lambda,1)_\LT\big|\\
     &\hspace{40pt}\leq \CExpo \hfactor\|\bof\|_\LT \|\chi\|_\LT\notag
\end{align}
 by \eqref{eq:cA}, \eqref{eq:zeta0xi0Bdd},
  \eqref{eq:InterpolationOne}, \eqref{eq:InterpolationZero},
  \eqref{eq:SystemDuality1} and \eqref{eq:SystemDuality5}.
\par
 Next we use \eqref{eq:System0} and \eqref{eq:zetahxih0} to  write
\begin{align}\label{eq:SystemDuality8}
&\,\cA\big((\zeta_0-\zeta_{0,h},\xi_0-\xi_{0,h}),(I_{k,h}\lambda,I_{k,h}\mu)\big)
     +(\zeta_0-\zeta_{0,h},1)_\LT(I_{k,h}\lambda,1)_\LT\notag\\
    =&\,\gamma^{-\frac12}(\bof,\bcurl I_{k,h}\lambda)_\LT
    -\cA\big((\zeta_{0,h},\xi_{0,h}),(I_{k,h}\lambda,I_{k,h}\mu)\big)
    -(\zeta_{0,h},1)_\LT(I_{k,h}\lambda,1)_\LT\notag \\
 =&\,\gamma^{-\frac12}\big(\bof,\bcurl (I_{k,h}\lambda-\PiO I_{k,h}\lambda)\big)_\LT+
    \cA_h\big((\zeta_{0,h},\xi_{0,h}),(I_{k,h}\lambda,I_{k,h}\mu)\big)\\
    &\hspace{40pt}
    -\cA\big((\zeta_{0,h},\xi_{0,h}),(I_{k,h}\lambda,I_{k,h}\mu)\big),  \notag
\end{align}
 and observe that
\begin{equation}\label{eq:SystemDuality9}
  \big|\gamma^{-\frac12}\big(\bof,\curl (I_{k,h}\lambda-\PiO I_{k,h}\lambda)\big)_\LT\big|
  \leq \CExpo \hfactor \|\bof\|_\LT\|\chi\|_\HOne
\end{equation}
 by \eqref{eq:InterpolationOne} and \eqref{eq:SystemDuality5}.
\par
 Using  \eqref{eq:cA} and \eqref{eq:cAh}, we can write
\begin{align}\label{eq:SystemDulaity10}
  &\,\cA_h\big((\zeta_{0,h},\xi_{0,h}),(I_{k,h}\lambda,I_{k,h}\mu)\big)
    -\cA\big((\zeta_{0,h},\xi_{0,h}),(I_{k,h}\lambda,I_{k,h}\mu)\big)\notag\\
    =&\,\big[a_h(\zeta_{0,h},I_{k,h}\lambda)-(\bcurl\zeta_{0,h},\bcurl I_{k,h}\lambda)_\LT\big]
    \notag\\
    &\hspace{30pt}+
    \big[a_h(\xi_{0,h},I_{k,h}\mu)-(\bcurl\xi_{0,h},\bcurl I_{k,h}\mu)_\LT\big]\\
    &\hspace{60pt}+
    \gamma^\frac12(\PiZ I_{k,h}\lambda-I_{k,h}\lambda,\xi_{0,h})_\LT
    -\gamma^\frac12(\zeta_{0,h},\PiZ I_{k,h}\mu-I_{k,h}\mu)_\LT\notag\\
    &\hspace{90pt}
    +\beta(\xi_{0,h},\PiZ I_{k,h}\mu-I_{k,h}\mu)_\LT.\notag
\end{align}
\par
 It follows from \eqref{eq:InterpolationOne},
 \eqref{eq:Inconsistency}, \eqref{eq:SystemDuality1} and \eqref{eq:SystemDuality5}  that
\begin{align}\label{eq:SystemDuality11}
  &\big|a_h(\xi_{0,h},I_{k,h}\mu)-(\bcurl\xi_{0,h},\bcurl I_{k,h}\mu)_\LT\big|\\
   &\hspace{50pt}
   \leq C_{\rm inc}|\xi_{0,h}|_\HOne|(\Id-\PiO)I_{k,h}\mu|_{h,1}
  \leq \CExpo \hfactor \|\bof\|_\LT\|\chi\|_\HOne.\notag
\end{align}
\par
 Similarly, we obtain
\begin{equation}\label{eq:SystemDuality12}
  \big|a_h(\xi_{0,h},I_{k,h}\mu)-(\bcurl\xi_{0,h},\bcurl I_{k,h}\mu)_\LT\big|
  \leq  \CExpo \hfactor \|\bof\|_\LT\|\chi\|_\HOne.
\end{equation}
\par
 Finally, we have
\begin{align}\label{eq:SystemDuality13}
 & |(\PiZ I_{k,h}\lambda-I_{k,h}\lambda,\xi_{0,h})_\LT|
 +|(\zeta_{0,h},\PiZ I_{k,h}\mu-I_{k,h}\mu)_\LT|\\
 &\hspace{30pt}
 |(\xi_{0,h},\PiZ I_{k,h}\mu-I_{k,h}\mu)_\LT|\leq
 \CExpo h^{1+\min(\ExpoStar,k)}
  \|\bof\|_\LT\|\chi\|_\HOne\notag
\end{align}
 by \eqref{eq:InterpolationZero}, \eqref{eq:SystemDuality1}
 and \eqref{eq:SystemDuality5}.
 \par
  The estimate \eqref{eq:SystemDualityZero} follows from
  \eqref{eq:SystemDuality6}--\eqref{eq:SystemDuality13}.

\par
 The derivation of \eqref{eq:SystemDualityOne} is very similar.
  First we observe that \eqref{eq:zetahxih1}, \eqref{eq:cAh} and
  \eqref{eq:Coercivity} imply
\begin{equation}\label{eq:SystemDuality15}
\|\zeta_{1,h}\|_\HOne+\|\xi_{1,h}\|_\HOne\leq C.
\end{equation}
 Then we   define
 $(\lambda,\mu)\in\HOne\times\HOnez$ by %
\begin{equation*}
    \cA\big((\psi,\eta),(\lambda,\mu)\big)+(\psi,1)_\LT(\lambda,1)_\LT
    =(\eta,\chi)_\LT
    \qquad\forall\,(\psi,\eta)\in\HOne\times\HOnez.
\end{equation*}
 The rest of the derivation is essentially identical to the derivation of
 \eqref{eq:SystemDualityZero}.
\end{proof}
%
\subsubsection{Estimate for $\xi_h$}\label{subsubsec:xihEstimatePositive} %
 We begin with $\xi_{0,h}$. Observe that the equation
\begin{equation}\label{eq:xihPositive1}
  (\bcurl \xi_0,\bcurl\eta)_\LT+  \beta(\xi_0,1)_\LT(\eta,1)_\LT=\gamma^\frac12
  (\zeta_0,\eta)_\LT     \qquad \forall\,\eta\in \HOnez
\end{equation}
 is part of the system \eqref{eq:System0}.
\par
 Let $\tilde\xi_0\in\HOnez$ be defined by the Dirichlet boundary value problem
\begin{equation}\label{eq:xihPositive2}
  (\bcurl\tilde\xi_0,\bcurl\eta)_\LT+ \beta(\tilde\xi_0,1)_\LT(\eta,1)_\LT
  =\gamma^\frac12(\zeta_{0,h},\eta)_\LT \qquad\forall\,\eta\in\HOnez.
\end{equation}
 Note that
\begin{equation}\label{eq:xihPositive3}
  \|\tilde\xi_0\|_\RegStar\leq \CExpo \|\bof\|_\LT
\end{equation}
 by elliptic regularity and \eqref{eq:SystemDuality1}.
\par
 It follows from \eqref{eq:SystemDualityZero}, \eqref{eq:xihPositive1} and
 \eqref{eq:xihPositive2}
 that
\begin{align*}
  &\big(\bcurl(\xi_0-\tilde\xi_0), \bcurl(\xi_0-\tilde\xi_0)\big)_\LT
  +\beta(\xi_0-\tilde\xi_0,1)_\LT (\xi_0-\tilde\xi_0,1)_\LT\\
  &\hspace{30pt}
  =\gamma^\frac12(\zeta_0-\zeta_{0,h},\xi_0-\tilde\xi_0)_\LT
  \leq \CExpo \hfactor\|\bof\|_\LT \|\xi_0-\tilde\xi_0\|_\HOne,
\end{align*}
 and hence we have, by \eqref{eq:PFDirichlet},
\begin{equation}\label{eq:xihPositive4}
  |\xi_0-\tilde\xi_0|_\HOne \leq \CExpo \hfactor\|\bof\|_\LT.
\end{equation}
\par
 On the other hand, we have the equation
\begin{equation}\label{eq:xihPositive5}
  a_h(\xi_{0,h},\eta_h)+\beta(\PiZ \xi_{0,h},\PiZ\eta_h)_\LT=
  \gamma^\frac12(\zeta_{0,h},\PiZ \eta_h)_\LT
  \qquad\forall\,\eta_h\in\zV_h^k
\end{equation}
 that is part of the system \eqref{eq:zetahxih0}.  Comparing \eqref{eq:xihPositive2}
 and \eqref{eq:xihPositive5},
 we see that $\xi_{0,h}\in\zV_h^k$ is the approximation of $\tilde\xi_0$ obtained by a
 virtual element method for the Dirichlet boundary value problem \eqref{eq:xihPositive2}
 whose right-hand
 side belongs to
 $\HOne$, and hence, because of \eqref{eq:xihPositive3},
\begin{equation}\label{eq:xihPositive6}
 |\tilde\xi_0-\xi_{0,h}|_\HOne+ |\tilde\xi_0-\PiO\xi_{0,h}|_{h,1}
 \leq \CExpo\hfactor \|\bof\|_\LT
\end{equation}
(cf. Appendix~\ref{append:VEMError}).
\par
 Combining \eqref{eq:xihPositive4}     and \eqref{eq:xihPositive6}, we find
\begin{equation}\label{eq:xihPositive7}
|\xi_0-\xi_{0,h}|_\HOne+  |\xi_0-\PiO\xi_{0,h}|_{h,1}\leq \CExpo\hfactor \|\bof\|_\LT.
\end{equation}
\par
 Next we consider $\xi_{1,h}$.  Observe that the  equation
\begin{equation}\label{eq:xihPositive8}
  (\bcurl\xi_1,\bcurl\eta)_\LT+\beta(\xi_1,\eta)_\LT=
  \gamma^\frac12(\zeta_1,\eta)_\LT+(1,\eta)_\LT
   \qquad\forall\,\eta\in \HOnez
\end{equation}
 is part of the system \eqref{eq:System1}.
\par
 Let $\tilde\xi_1\in\HOnez$ be defined by the Dirichlet boundary value problem
\begin{equation}\label{eq:xihPositive9}
  (\bcurl\tilde\xi_1,\bcurl\eta)_\LT+\beta(\tilde\xi_1,\eta)_\LT
  =\gamma^\frac12(\zeta_{1,h},\eta)_\LT+(1,\eta)_\LT
   \qquad\forall\,\eta\in \HOnez.
\end{equation}
  We have
\begin{equation}\label{eq:xihPositive10}
  \|\tilde\xi_1\|_\RegStar \leq \CExpo
\end{equation}
 by elliptic regularity and  \eqref{eq:SystemDuality15}.
\par
 The following analog of \eqref{eq:xihPositive4} is a consequence of
 \eqref{eq:SystemDualityOne}, \eqref{eq:xihPositive8} and \eqref{eq:xihPositive9}:
\begin{equation}\label{eq:xihPositive11} |\xi_1-\tilde\xi_1|_\HOne\leq \CExpo.
\end{equation}
\par
  Moreover, we also have the equation
\begin{equation}\label{eq:xihPositive12}
  a_h(\xi_{1,h},\eta_h)_\LT+\beta(\PiZ \xi_{1,h},\PiZ \eta_h)_\LT=
  \gamma^\frac12(\zeta_{1,h},\PiZ \eta_h)_\LT+(1,\eta_h)_\LT
\end{equation}
 for all $\eta_h\in\zV_h^k$
 that is part of the system \eqref{eq:zetahxih1}.
\par
 Comparing \eqref{eq:xihPositive9}  and \eqref{eq:xihPositive12}, we see that
 $\xi_{1,h}\in\zV_h^k$ is an approximation of $\tilde\xi_1$ obtained
 by a virtual element
 method for the Dirichlet boundary value problem \eqref{eq:xihPositive9} whose right-hand side
 belongs to $\HOne$.
   Therefore  we have, by \eqref{eq:xihPositive10},
\begin{equation}\label{eq:xihPositive13}
  |\tilde\xi_1-\xi_{1,h}|_\HOne+|\tilde\xi_1-\PiO\xi_{1,h}|_{h,1}
  \leq \CExpo\hfactor
\end{equation}
 (cf. Appendix~\ref{append:VEMError}).
\par
 Putting \eqref{eq:xihPositive11} and  \eqref{eq:xihPositive13} together, we obtain
\begin{equation}\label{eq:xihPositive14}
 |\xi_1-\xi_{1,h}|_\HOne+ |\xi_1-\PiO\xi_{1,h}|_{h,1}\leq \CExpo\hfactor.
\end{equation}
\par
 The estimate \eqref{eq:xihError}  follows from  \eqref{eq:SystemRepresentation},
 \eqref{eq:zetahxih}, \eqref{eq:xihPositive7}, \eqref{eq:xihPositive14}
 and the fact that
 $$|(1,\xi_{0})_\LT|\leq C\|\bof\|_\LT$$
 because of \eqref{eq:zeta0xi0Bdd}.
\section{Error Analysis of $\bu_h$} \label{sec:uhError}
 According to  \eqref{eq:uh1} and \eqref{eq:uh2}, we need to find error estimate
 for $\phi_h$, and in the case where $m\geq 1$, also error estimates for
 $\varphi_{j,h}$ and $c_j$ for $1\leq j\leq m$.
%
\subsection{Error Estimate for $\phi_h$}\label{subsec:phihError}
 We follow the strategy in Section~\ref{subsubsec:xihEstimateZero} and
 Section~\ref{subsubsec:xihEstimatePositive}.
\begin{lemma}\label{lem:phihError}
  Let $\phi_h$ be the approximation of $\phi$ defined by \eqref{eq:phih}.
  We have
\begin{equation} \label{eq:phihError}
  |\phi-\phi_h|_\HOne+|\phi-\PiO\phi_h|_{h,1}\leq \CExpo\hfactor\|\bof\|_\LT.
\end{equation}
\end{lemma}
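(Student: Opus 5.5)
We follow the strategy of Section~\ref{subsubsec:xihEstimateZero}: introduce an intermediate continuous problem whose data is the discrete $\xi_h$, so that the error splits into a continuous perturbation term and a standard virtual element error. Let $\tilde\phi\in\HOne$ be defined by the Neumann boundary value problem
\begin{equation*}
 (\bcurl\tilde\phi,\bcurl\psi)_\LT+(\tilde\phi,1)_\LT(\psi,1)_\LT=(\xi_h,\psi)_\LT
 \qquad\forall\,\psi\in\HOne .
\end{equation*}

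First, we bound $\phi-\tilde\phi$. Subtracting this equation from \eqref{eq:phiEq2} gives
\begin{equation*}
 (\bcurl(\phi-\tilde\phi),\bcurl\psi)_\LT+(\phi-\tilde\phi,1)_\LT(\psi,1)_\LT=(\xi-\xi_h,\psi)_\LT
\end{equation*}
for all $\psi\in\HOne$. Take $\psi=\phi-\tilde\phi$. The left-hand side controls $\|\phi-\tilde\phi\|_\HOne^2$ by \eqref{eq:PFNeumann}. The right-hand side is bounded by $\|\xi-\xi_h\|_\LT\|\phi-\tilde\phi\|_\LT$. Since $\xi-\xi_h\in\HOnez$, \eqref{eq:PFDirichlet} and Theorem~\ref{thm:xihError} give $\|\xi-\xi_h\|_\LT\leq C|\xi-\xi_h|_\HOne\leq\CExpo\hfactor\|\bof\|_\LT$. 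Hence
\begin{equation*}
 \|\phi-\tilde\phi\|_\HOne\leq \CExpo\hfactor\|\bof\|_\LT .
\end{equation*}
This step needs no duality argument, unlike the $\xi$ analysis, because Theorem~\ref{thm:xihError} already gives an energy estimate for $\xi-\xi_h$.

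Second, we bound $\tilde\phi-\phi_h$. Comparing with \eqref{eq:phih}, $\phi_h\in V_h^k$ is exactly the virtual element approximation of $\tilde\phi$: it uses the stabilized form $a_h$, the mean-value term, and the right-hand side $(\PiZ\xi_h,\PiZ\psi_h)_\LT$, where the datum $\xi_h$ belongs to $\HOne$. By \eqref{eq:xihBdd} and elliptic regularity for the Neumann problem on the polygon \cite{Grisvard:1985:EPN,Dauge:1988:EBV}, $\|\tilde\phi\|_\RegStar\leq\CExpo\|\xi_h\|_\LT\leq\CExpo\|\bof\|_\LT$. The Neumann-problem VEM estimate of Appendix~\ref{append:VEMError} then yields
\begin{equation*}
 |\tilde\phi-\phi_h|_\HOne+|\tilde\phi-\PiO\phi_h|_{h,1}\leq \CExpo\hfactor\|\bof\|_\LT .
\end{equation*}
That appendix argument is a Strang-type estimate. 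Its ingredients are the coercivity \eqref{eq:Coercivity}, the inconsistency bound \eqref{eq:Inconsistency}, and the interpolation estimates \eqref{eq:InterpolationOne}--\eqref{eq:InterpolationZero}. It also needs a bound on the load inconsistency $(\xi_h,\psi_h)-(\PiZ\xi_h,\PiZ\psi_h)=((\Id-\PiZ)\xi_h,(\Id-\PiZ)\psi_h)$. That term is $O(h^2)\|\xi_h\|_\HOne|\psi_h|_\HOne$ on each element, so it is harmless.

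Third, we combine the two bounds with the triangle inequality. For the $\PiO$ term, $|\phi-\PiO\phi_h|_{h,1}\leq|\phi-\tilde\phi|_\HOne+|\tilde\phi-\PiO\phi_h|_{h,1}$, and both pieces are already controlled. This gives \eqref{eq:phihError}.

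The main obstacle is the regularity step. The right-hand side $\xi_h$ is only known to be in $\HOne$ with the uniform bound \eqref{eq:xihBdd}, so we must check that this suffices for the $\RegStar$ regularity and the constant $\CExpo$. Since $\ExpoStar<2$ would require only $L^2$ data and $\xi_h\in\HOne$ is more than enough, we expect this to go through as in \eqref{eq:ZeroError2}. A minor subtlety is that the mean-value term makes the problem solvable without a compatibility condition, so the regularity is applied to $\tilde\phi-(\tilde\phi,1)_\LT/|\O|$, which solves a standard Neumann problem with compatible data.
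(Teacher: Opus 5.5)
Your proposal is correct and follows the paper's proof essentially step for step. Like the paper, you introduce the intermediate Neumann solution $\tilde\phi$ with data $\xi_h$, bound $\phi-\tilde\phi$ by the energy argument using \eqref{eq:PFDirichlet}, \eqref{eq:PFNeumann} and Theorem~\ref{thm:xihError}, and bound $\tilde\phi-\phi_h$ with the Strang-type estimate of Appendix~\ref{append:VEMError}.

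One small correction concerns the regularity bound. It should read $\|\tilde\phi\|_\RegStar\leq\CExpo\|\xi_h\|_\HOne$, not $\CExpo\|\xi_h\|_\LT$. When $\ExpoStar>1$ (e.g.\ convex domains, which is where the $h^2$ rate for $k=2$ comes from), $L^2$ data yields only $H^2$ regularity. Your remark that ``$\ExpoStar<2$ would require only $L^2$ data'' is therefore inaccurate. The $\HOne$ bound on $\xi_h$ supplied by \eqref{eq:xihBdd} is what actually makes this step work.
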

\begin{proof}

 Let $\tilde\phi\in\HOne$ be defined by the Neumann boundary value problem
\begin{equation}\label{eq:phih1}
  (\bcurl\tilde\phi,\bcurl\psi)_\LT+(\tilde\phi,1)_\LT(\psi,1)_\LT
  =(\xi_h,\psi)_\LT\qquad
  \forall\,\psi\in \HOne.
\end{equation}
 Note that
\begin{equation}\label{eq:phih2}
\|\tilde\phi\|_\RegStar
  \leq \CExpo \|\bof\|_\LT
\end{equation}
 by elliptic regularity and \eqref{eq:xihBdd}.
\par
 It follows from \eqref{eq:PFDirichlet}, \eqref{eq:phiEq2},
 \eqref{eq:xihError} and \eqref{eq:phih1} that
\begin{align*}
  &\big(\bcurl(\phi-\tilde\phi),\bcurl(\phi-\tilde\phi)\big)_\LT
  +(\phi-\tilde\phi,1)_\LT(\phi-\tilde\phi,1)_\LT\\
  &\hspace{50pt}= (\xi-\xi_h,\phi-\tilde\phi)_\LT
  \leq \CExpo \hfactor\|\bof\|_\LT \|\phi-\tilde\phi\|_\LT,
  \notag
\end{align*}
 and hence we have, by \eqref{eq:PFNeumann},
\begin{equation}\label{eq:phih4}
  \|\phi-\tilde\phi\|_\HOne\leq \CExpo \hfactor \|\bof\|_\LT.
\end{equation}
\par
 Comparing \eqref{eq:phih} and \eqref{eq:phih1}, we see that
 $\phi_h\in V_h^k$ is the approximation
 of $\tilde\phi$ generated by a  virtual element method for the Neumann
 boundary value problem \eqref{eq:phih1} whose right-hand side belongs to
  $\HOne$, and hence we have,
  by \eqref{eq:phih2},
\begin{equation}\label{eq:phih5}
  |\tilde\phi-\phi_h|_\HOne+|\tilde\phi-\PiO\phi_h|_{h,1}\leq
  \CExpo \hfactor \|\bof\|_\LT
\end{equation}
 (cf. Appendix~\ref{append:VEMError}).
\par
 The estimate \eqref{eq:phihError} follows from \eqref{eq:phih4}
  and \eqref{eq:phih5}.
\end{proof}
\begin{corollary} \label{cor:phihBoundary}
  We have
\begin{equation} \label{eq:phihBoundary}
  \|\bn\times \bcurl(\PiO\phi_h)\|_{L^2(\p\O)}\leq \CExpo
  \hfactor h^{-\frac12}\|\bof\|_\LT.
\end{equation}
\end{corollary}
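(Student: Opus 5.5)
The plan is to exploit the fact that the exact potential $\phi$ has vanishing tangential $\bcurl$ on the boundary. By Remark~\ref{rem:phiBoundary}, $\bn\times\bcurl\phi=0$ on $\p\O$, so
\begin{equation*}
\bn\times\bcurl(\PiO\phi_h)=\bn\times\bcurl(\PiO\phi_h-\phi)\quad\text{on }\p\O .
\end{equation*}
It therefore suffices to bound $\|\bcurl(\PiO\phi_h-\phi)\|_{L^2(\p\O)}$. I will split this quantity into edge contributions from the polygons $D\in\cT_h$ that have an edge on $\p\O$, and then apply a trace inequality elementwise.

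\textbf{Step 1: interpolant splitting.} To get a function that is piecewise polynomial on each $D$, I insert the interpolant:
\begin{equation*}
\PiO\phi_h-\phi=\PiO(\phi_h-I_{k,h}\phi)+(\PiO I_{k,h}\phi-\phi).
\end{equation*}
The regularity of $\phi$ is available: $\phi$ solves the Neumann problem \eqref{eq:phiEq2} with right-hand side $\xi\in\HOne$, so elliptic regularity together with \eqref{eq:xiBddZero} or \eqref{eq:xiBddPositive} gives $\|\phi\|_\RegStar\le\CExpo\|\bof\|_\LT$.

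\textbf{Step 2: the polynomial part.} The first piece, $\PiO(\phi_h-I_{k,h}\phi)$, is a polynomial of degree at most $k$ on each $D$. For polynomials on shape-regular polygons satisfying \eqref{subeqs:MA}, the discrete trace inequality
\begin{equation*}
\|\bcurl q\|_{L^2(e)}\le Ch_D^{-\frac12}\|\bcurl q\|_\LTD,\qquad e\in\ED,
\end{equation*}
holds. Summing over $D$ bounds the contribution by $Ch^{-\frac12}|\PiO(\phi_h-I_{k,h}\phi)|_{h,1}$. This is in turn at most
\begin{equation*}
Ch^{-\frac12}\big(|\phi-\PiO\phi_h|_{h,1}+|\phi-\PiO I_{k,h}\phi|_{h,1}\big)\le\CExpo\hfactor h^{-\frac12}\|\bof\|_\LT ,
\end{equation*}
using Lemma~\ref{lem:phihError} for the first term and \eqref{eq:InterpolationOne} for the second.

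\textbf{Step 3: the non-polynomial part (main obstacle).} The second piece, $\PiO I_{k,h}\phi-\phi$, requires a continuous trace inequality for $H^{1+s}(D)$-functions with $s=\ExpoStar>0$. I would use the scaled inequality
\begin{equation*}
\|v\|_{L^2(\p D)}^2\le C\big(h_D^{-1}\|v\|_\LTD^2+h_D^{2t}|v|_{H^t(D)}^2\big),\qquad \tfrac12<t\le1,
\end{equation*}
applied to $v=\bcurl(\PiO I_{k,h}\phi-\phi)$. For $v$ of regularity $H^s$ with $s>1/2$ this simply gives $h^{\min(s,k)-1/2}$. If instead $s\le1/2$, which can occur only when $\omega>\pi$, I need a weaker bound, since $\bcurl\phi$ may then have no $L^2$ trace. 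In that case I would argue directly through the identity $\bn\times\bcurl\phi=0$: there, $\bcurl\phi$ is only in $H^s$, but its tangential trace vanishes in the weak sense.

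The simplest route for this case is to use instead the already-polynomial comparison function $\PiO I_{k,h}\phi$. Its tangential derivative on boundary edges equals the tangential derivative of the piecewise polynomial boundary interpolant of $\phi$. Since $\phi$ is constant on each component of $\p\O$ (this is what $\bn\times\bcurl\phi=0$ means), the interpolant's tangential derivative vanishes there for $k=1$, and essentially there for $k=2$ after using the edge dofs. Hence $\bn\times\bcurl\PiO I_{k,h}\phi$ is controlled directly, and only Step 2 is needed.

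Combining Steps 2 and 3 yields \eqref{eq:phihBoundary}.
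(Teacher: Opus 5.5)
Your main line is the paper's argument. Remark~\ref{rem:phiBoundary} turns $\bn\times\bcurl(\PiO\phi_h)$ into $\bn\times\bcurl(\PiO\phi_h-\phi)$. The piecewise polynomial part is handled by a discrete trace/inverse estimate, and the rest by a scaled fractional trace inequality of order $t\in(\frac12,\min(\ExpoStar,k))$. That branch is all you need: every interior angle of a polygon is less than $2\pi$, so $\ExpoStar>\frac12$ always, which is why the paper's $\delta=\ExpoStar/2-\frac14$ is positive.

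Your case $s\le\frac12$ therefore never occurs. That matters, because the argument you give for it is wrong. In two dimensions $\bn\times\bcurl\phi=-\p\phi/\p n$, so Remark~\ref{rem:phiBoundary} is the natural Neumann condition of \eqref{eq:phiEq2}. It does not say that $\phi$ is constant on the components of $\p\O$; that would be $\bn\cdot\bcurl\phi=0$. Consequently $\bn\times\bcurl(\PiO I_{k,h}\phi)$ is a normal derivative of a polynomial and is not controlled by the boundary values of $\phi$. Besides, $\Pi^1_{k,D}I_{k,D}\phi$ does not agree with $I_{k,D}\phi$ on the edges of $D$. Delete that branch rather than try to repair it.

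Two smaller corrections. First, the scaled trace inequality should read $\|v\|_{L^2(\p D)}^2\le C\big(h_D^{-1}\|v\|_\LTD^2+h_D^{2t-1}|v|_{H^t(D)}^2\big)$; the rate $h^{\min(\ExpoStar,k)-\frac12}$ is unaffected. Second, you take for granted a bound on $|\PiO I_{k,h}\phi-\phi|_{H^{1+t}(D)}$, but it is not among the listed estimates, since \eqref{eq:InterpolationOne} is only an $H^1$ estimate. You would have to split $\PiO I_{k,h}\phi-\phi=(\PiO\phi-\phi)+\PiO(I_{k,h}\phi-\phi)$ and use \eqref{eq:PiOErrorEstimate}, an inverse estimate, \eqref{eq:PiOPythagoras} and \eqref{eq:InterpolationOne}.

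This shows that inserting $I_{k,h}$ is a detour. The paper splits directly as $\PiO(\phi_h-\phi)+(\PiO\phi-\phi)$. Then \eqref{eq:PiOErrorEstimate} handles the nonpolynomial part, and \eqref{eq:PiOPythagoras} together with Lemma~\ref{lem:phihError} handles the polynomial part.
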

\begin{proof} Let $e$ be an edge of $D\in\cT_h$ such that $e\subset \p\O$ and
 $\delta>0$ be defined by
\begin{equation*}
  \delta=\frac{\ExpoStar}{2}-\frac14.
\end{equation*}
\par
    We have,
\begin{align}\label{eq:phihBoundary1}
& h_D^{-1}\|\bn\times\bcurl(\PiO\phi_h)\|_{L^2(e)}^2
    =h_D^{-1}\|\bn\times \bcurl(\PiO\phi_h-\phi)\|_{L^2(e)}^2\notag\\
    &\hspace{40pt}\leq h_D^{-1}\|\bcurl(\PiO\phi_h-\phi)\|_{L^2(e)}^2  \\
    &\hspace{40pt}\leq C_\delta\big[h_D^{-2}|\PiO\phi_h-\phi|_{H^1(D)}^2+
    h_D^{2\delta-1}|\PiO\phi_h-\phi|_{H^{\frac32+\delta}(D)}^2\big]  \notag
\end{align}
 by Remark~\ref{rem:phiBoundary} and a trace inequality with scaling
 (cf. \cite[Section~2.6]{BSung:2018:VEM}, where the arguments  can be
 extended to fractional order Sobolev spaces).
\par
 It follows from an inverse estimate (cf. \cite[Section~2.8]{BSung:2018:VEM},
 where the arguments   can also be extended to fractional
 order Sobolev spaces) that
\begin{align}\label{eq:phihBoundary2}
   |\PiO\phi_h-\phi|_{H^{\frac32+\delta}(D)}^2
   &\leq  2\big(|\PiO(\phi_h-\phi)|_{H^{\frac32+\delta}(D)}^2
   +|\PiO\phi-\phi|_{H^{\frac32+\delta}(D)}^2\big)  \\
   &\leq C_\delta \big(h_D^{-1-2\delta}|\Pi_{k,h}^1(\phi_h-\phi)|_{H^1(D)}^2
   + |\PiO\phi-\phi|_{H^{\frac32+\delta}(D)}^2\big).
\notag
\end{align}
\par
 We obtain the estimate \eqref{eq:phihBoundary} by combining \eqref{eq:PiOPythagoras},
 \eqref{eq:PiOErrorEstimate},
 \eqref{eq:phihError},  \eqref{eq:phihBoundary1}, \eqref{eq:phihBoundary2}
 and the elliptic regularity estimate
  $$\|\phi\|_\RegStar\leq \CExpo \|\bof\|_\LT$$
 that follows from \eqref{eq:phiEq2} and \eqref{eq:xiBddPositive}.
\end{proof}

\subsection{Error Estimate for $\varphi_{j,h}$ and $c_{j,h}$
for a Multiply Connected Domain}
\label{subsec:harmonicError}
 Let  $\O$ be a domain with Betti number $m\geq1$.
\subsubsection{Error Analysis of $\varphi_{i,h}$} \label{subsubsec:HamonicError}
Let the discrete harmonic functions
 $\varphi_{1,h},\ldots,\varphi_{m,h}\in V_h$ be defined by
 \eqref{subeqs:DHarmonicFunctions}.
\begin{lemma}\label{lem:varphi}
 We have
\begin{alignat}{3}
  |\varphi_j-\varphi_{j,h}|_\HOne+|\varphi_j-\PiO\varphi_{j,h}|_{h,1}&\leq
   \CExpo\hfactor &\qquad&\text{for}\quad 1\leq j\leq m,\label{eq:HarmonicErrors} \\
   |\varphi_{j,h}-\PiO\varphi_{j,h}|_{h,1}&\leq
   \CExpo\hfactor &\qquad&\text{for}\quad 1\leq j\leq m.\label{eq:DHarmonicErrors}
\end{alignat}
\end{lemma}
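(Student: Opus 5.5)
The plan is to treat $\varphi_{j,h}$ as the virtual element approximation of the inhomogeneous Dirichlet problem \eqref{subeqs:HarmonicFunctions}. Because $\varphi_j$ is constant on each component $\G_0,\ldots,\G_m$ of $\p\O$, the boundary data are reproduced exactly by the virtual element space. Concretely, let $g_j\in V_h^k$ be the function whose dofs equal those of $\varphi_j$ at boundary nodes and vanish at all interior dofs. Then $g_j|_{\p\O}=\varphi_j|_{\p\O}$, and $\varphi_{j,h}-g_j\in\zV_h^k$. Likewise $I_{k,h}\varphi_j$ is well defined by \eqref{eq:Generic}/\eqref{eq:Exceprional}, and $I_{k,h}\varphi_j-\varphi_{j,h}\in\zV_h^k$, since both functions equal $0$ or $1$ at the boundary nodes and are piecewise polynomial on $\p\O$.

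Set $e_h=I_{k,h}\varphi_j-\varphi_{j,h}\in\zV_h^k$. We use \eqref{eq:Coercivity}, then \eqref{eq:DHarmonic}, and finally \eqref{eq:Harmonic} with $v=e_h\in\HOnez$:
\begin{align*}
\Cc^{-1}|e_h|_\HOne^2\le a_h(e_h,e_h)&=a_h(I_{k,h}\varphi_j,e_h)\\
&=\big[a_h(I_{k,h}\varphi_j,e_h)-(\grad I_{k,h}\varphi_j,\grad e_h)_\LT\big]
+\big(\grad(I_{k,h}\varphi_j-\varphi_j),\grad e_h\big)_\LT .
\end{align*}
The second term is bounded by $\CExpo\hfactor|e_h|_\HOne$, using \eqref{eq:InterpolationOne} and Remark~\ref{rem:RegularityNotation}. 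For the first term, apply the identity \eqref{eq:Identity} with $\grad$ (Remark~\ref{rem:Grad}), with the roles arranged so that the projection error falls on the interpolant rather than on $e_h$. Since \eqref{eq:Identity} is symmetric in its structure, the same argument that gives \eqref{eq:Inconsistency} yields a bound $C|(\Id-\PiO)I_{k,h}\varphi_j|_{h,1}|e_h|_\HOne$. By \eqref{eq:InterpolationOne} and the triangle inequality, $|(\Id-\PiO)I_{k,h}\varphi_j|_{h,1}\le|I_{k,h}\varphi_j-\varphi_j|_\HOne+|\varphi_j-\PiO I_{k,h}\varphi_j|_{h,1}\le\CExpo\hfactor$. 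Hence $|e_h|_\HOne\le\CExpo\hfactor$, and the triangle inequality gives the bound on $|\varphi_j-\varphi_{j,h}|_\HOne$.

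For the projected term, use the stability $|\PiO v|_{h,1}\le|v|_{h,1}$ from \eqref{eq:PiOPythagoras}:
\begin{equation*}
|\varphi_j-\PiO\varphi_{j,h}|_{h,1}\le|\varphi_j-\PiO I_{k,h}\varphi_j|_{h,1}+|\PiO e_h|_{h,1}\le\CExpo\hfactor+|e_h|_\HOne .
\end{equation*}
This proves \eqref{eq:HarmonicErrors}. The estimate \eqref{eq:DHarmonicErrors} then follows from $|\varphi_{j,h}-\PiO\varphi_{j,h}|_{h,1}\le|\varphi_{j,h}-\varphi_j|_\HOne+|\varphi_j-\PiO\varphi_{j,h}|_{h,1}$.

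The main obstacle is a technical one. We must justify that the boundary data are captured exactly, so that $e_h\in\zV_h^k$ and no boundary-approximation error term arises. We must also check that the inconsistency bound can be taken with $(\Id-\PiO)$ acting on the interpolant rather than on the test function $e_h$. The latter follows from \eqref{eq:Identity} written with $\bcurl$ replaced by $\grad$: we subtract $(\grad\PiO I_{k,h}\varphi_j,\grad e_h)$, use \eqref{eq:PiOProjection} locally, and bound the stabilization term by Cauchy–Schwarz and \eqref{eq:SDBound}.
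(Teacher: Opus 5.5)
Your proof is correct and is essentially the paper's argument. The paper obtains \eqref{eq:HarmonicErrors} by appealing to the Strang-type analysis of Appendix~\ref{append:VEMError} (interpolant $I_{k,h}\varphi_j$, coercivity \eqref{eq:Coercivity}, and the inconsistency estimate with the projection error on the interpolant), which you carry out explicitly, including the point the paper leaves implicit: the piecewise constant Dirichlet data are reproduced exactly, so $I_{k,h}\varphi_j-\varphi_{j,h}\in\zV_h^k$. Your derivation of \eqref{eq:DHarmonicErrors} by a direct triangle inequality through $\varphi_j$ is a slightly shorter version of the paper's, which additionally passes through $|\varphi_j-\PiO\varphi_j|_{h,1}$ and the best-approximation property of $\PiO$.
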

\begin{proof}
 Since
 $\varphi_{j,h}$  is an approximation of
  the harmonic function $\varphi_j$
 obtained by a virtual element method for the Dirichlet boundary value problem
 \eqref{subeqs:HarmonicFunctions} whose right-hand side
 belongs to $\HOne$,
  the estimate \eqref{eq:HarmonicErrors} follows from
  \eqref{subeqs:HarmonicRegularity} (cf. Appendix~\ref{append:VEMError}).
\par
 We then deduce from \eqref{eq:PiOPythagoras}, \eqref{eq:PiOErrorEstimate}
 and \eqref{eq:HarmonicErrors}
 that
\begin{align*}
  |\varphi_{j,h}-\PiO\varphi_{j,h}|_{h,1}
  &\leq |(\varphi_{j,h}-\varphi_j)-\PiO(\varphi_{j,h}-\varphi_j)|_{h,1}
  +|\varphi_j-\PiO\varphi_j|_{h,1}    \notag\\
  &\leq |\varphi_{j,h}-\varphi_j|_{h,1}+|\varphi_j-\PiO\varphi_{j,h}|_{h,1} \\
  &\leq \CExpo \hfactor.
\end{align*}
\end{proof}
%
\subsubsection{Error Analysis of $c_{j,h}$}\label{subsubsec:CoeffiicentError}
 We begin by comparing the
 right-hand sides of the  systems
 \eqref{eq:Coefficients} and \eqref{eq:DCoefficients}: %
\begin{equation}\label{eq:RHS}
  |\gamma^{-1}(\bof,\grad\varphi_{i,h})_\LT-\gamma^{-1}(\bof,\grad\PiO\varphi_{i,h})_\LT|
  \leq \CExpo\hfactor\|\bof\|_\LT,
\end{equation}
  which  follows from \eqref{eq:DHarmonicErrors}.
 \par
 Now we compare the components of the  matrices in the systems
  \eqref{eq:Coefficients} and \eqref{eq:DCoefficients}.
\par
 Since $\varphi_i-\varphi_{i,h}$ belongs to $\HOnez$, it follows from \eqref{eq:Harmonic}
 that
\begin{align*}
  &(\grad\varphi_i,\grad\varphi_j)_\LT-(\grad\varphi_{i,h},\grad\varphi_{j,h})_\LT \\
      &\hspace{40pt}=
      \big(\grad(\varphi_i-\varphi_{i,h}),\grad(\varphi_{j,h}-\varphi_j)\big)_\LT
      \qquad\text{for}\quad 1\leq i,j\leq m,
      \notag
\end{align*}
 and hence   we have
\begin{equation}\label{eq:Component1}
  \big|(\grad\varphi_i,\grad\varphi_j)_\LT
  -(\grad\varphi_{i,h},\grad\varphi_{j,h})_\LT\big|
  \leq \CExpo h^{2\min(\ExpoStar,k)} \end{equation}
 by \eqref{eq:HarmonicErrors}.
\par
 Next we obtain from \eqref{eq:Identity} and Remark~\ref{rem:Grad} that
\begin{align*}
  a_h(\varphi_{i,h},\varphi_{j,h})&
  =(\grad\varphi_{i,h},\grad\varphi_{j,h})_\LT
-\sum_{D\in\cT_h} \big(\grad\varphi_{i,h},\grad(\Id-\Pi_{k,D}^1)\varphi_{j,h}\big)_\LTD\\
  &\hspace{30pt}
  +\sum_{D\in\cT_h}S^D\big((\Id-\Pi_{k,D}^1)\varphi_{i,h},(\Id-\Pi_{k,D}^1)\varphi_{j,h}\big)
\end{align*}
 which implies, through \eqref{eq:SDBound}, \eqref{eq:HarmonicErrors}
  and \eqref{eq:DHarmonicErrors},
\begin{equation} \label{eq:Component2}
  \big|(\grad\varphi_{i,h},\grad\varphi_{j,h})_\LT-a_h(\varphi_{i,h},\varphi_{j,h})|
  \leq \CExpo \hfactor \qquad\text{for}\quad 1\leq i,j\leq m.
\end{equation}
 \par
 Putting \eqref{eq:Component1} and \eqref{eq:Component2} together, we find
\begin{equation}\label{eq:ComponentError}
  \big|(\grad\varphi_i,\grad\varphi_j)_\LT-a_h(\varphi_{i,h},\varphi_{j,h})\big|
  \leq \CExpo \hfactor \qquad\text{for}\quad 1\leq i,j\leq m.
\end{equation}
\begin{lemma} \label{lem:CoefficientError}
  We have
\begin{equation}\label{eq:CoefficientError}
  |c_j-c_{j,h}|\leq \CExpo\hfactor\|\bof\|_\LT.
\end{equation}
\end{lemma}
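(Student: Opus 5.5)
This is a perturbation argument for a symmetric positive definite linear system. Write $M=[(\grad\varphi_i,\grad\varphi_j)_\LT]$ and $M_h=[a_h(\varphi_{i,h},\varphi_{j,h})]$ for the $m\times m$ matrices in \eqref{eq:Coefficients} and \eqref{eq:DCoefficients}. Write $\bb=[\gamma^{-1}(\bof,\grad\varphi_i)_\LT]$ and $\bb_h=[\gamma^{-1}(\bof,\grad\PiO\varphi_{i,h})_\LT]$ for the right-hand sides, and $\bc=[c_j]$, $\bc_h=[c_{j,h}]$ for the solutions, so that $M\bc=\bb$ and $M_h\bc_h=\bb_h$. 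Then
\begin{equation*}
 M_h(\bc-\bc_h)=(M_h-M)\bc+(\bb-\bb_h).
\end{equation*}
The lemma will follow once we bound the two terms on the right and show that $M_h^{-1}$ is uniformly bounded.

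\textbf{Step 1: the matrix term.} By \eqref{eq:ComponentError}, every entry of $M-M_h$ is bounded by $\CExpo\hfactor$. Combined with \eqref{eq:CoefficientBdd}, this gives $|(M_h-M)\bc|\leq \CExpo\hfactor\|\bof\|_\LT$.

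\textbf{Step 2: the right-hand side term.} Split
\begin{equation*}
 \bb-\bb_h=\gamma^{-1}\big[(\bof,\grad(\varphi_i-\varphi_{i,h}))_\LT\big]+\gamma^{-1}\big[(\bof,\grad(\varphi_{i,h}-\PiO\varphi_{i,h}))_\LT\big].
\end{equation*}
The first piece is bounded by \eqref{eq:HarmonicErrors}. The second piece is exactly \eqref{eq:RHS}, which is bounded by \eqref{eq:DHarmonicErrors}. Together they give $|\bb-\bb_h|\leq\CExpo\hfactor\|\bof\|_\LT$.

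\textbf{Step 3: uniform invertibility of $M_h$.} This is the main point. For $\bm{x}\in\R^m$, \eqref{eq:Coercivity} gives
\begin{equation*}
 \bm{x}^tM_h\bm{x}=a_h\Big(\sum_j x_j\varphi_{j,h},\sum_j x_j\varphi_{j,h}\Big)\geq \Cc^{-1}\Big|\sum_j x_j\varphi_{j,h}\Big|_\HOne^2.
\end{equation*}
It remains to bound $|\sum_j x_j\varphi_{j,h}|_\HOne$ from below by $c|\bm{x}|$ with $c$ independent of $h$. The function $w=\sum_j x_j\varphi_{j,h}$ vanishes on $\G_0$ and equals $x_j$ on $\G_j$. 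By a trace inequality and \eqref{eq:PFDirichlet}-type arguments for functions vanishing on $\G_0$,
\begin{equation*}
 |x_j|\,|\G_j|^{1/2}=\|w\|_{L^2(\G_j)}\leq C|w|_\HOne .
\end{equation*}
Hence the smallest eigenvalue of $M_h$ is bounded below by a positive constant independent of $h$, and $|M_h^{-1}|\leq C$.

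If that argument were troublesome, an alternative is available: \eqref{eq:ComponentError} shows $M_h\to M$ with $M$ SPD. So for $h$ small $M_h^{-1}$ is uniformly bounded, and for the finitely many remaining $h$ the estimate holds trivially after enlarging the constant. The first argument is cleaner, and I would use it.

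\textbf{Conclusion.} Combining Steps 1–3 gives
\begin{equation*}
 |\bc-\bc_h|\leq |M_h^{-1}|\big(|(M_h-M)\bc|+|\bb-\bb_h|\big)\leq \CExpo\hfactor\|\bof\|_\LT,
\end{equation*}
which is \eqref{eq:CoefficientError}.
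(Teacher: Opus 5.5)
Your proof is correct and is essentially the paper's perturbation argument for the SPD system. The only algebraic difference is that the paper uses the mirrored identity $\bc-\bc_h=\bA^{-1}(\bb-\bb_h)+\bA^{-1}(\bA_h-\bA)\bA_h^{-1}\bb_h$ with \eqref{eq:bbBdd}--\eqref{eq:bAMinusBAh}, whereas you invert $M_h$ and use the bound \eqref{eq:CoefficientBdd} on $\bc$.

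Your write-up also makes explicit two points the paper leaves tacit. First, you use \eqref{eq:HarmonicErrors} to pass from \eqref{eq:RHS} to a bound on $\bb-\bb_h$. Second, you prove $h$-uniform invertibility of the discrete matrix in Step~3, via \eqref{eq:Coercivity} and a Friedrichs/trace bound for functions vanishing on $\G_0$. The paper's identity needs that same fact to control $\bc_h=\bA_h^{-1}\bb_h$.
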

\begin{proof}
 We can write \eqref{eq:Coefficients} in the form of
\begin{equation*}
  \bA \bc=\bb,
\end{equation*}
 where $\bA\in\R^{m\times m}$ is the matrix with components
 $\bA(i,j)=(\grad\varphi_i,\grad\varphi_j)_\LT$,
 $\bc\in\R^m$ is the vector with components $\bc(i)=c_i$, and
 $\bb\in\R^m$ is the vector with
 components $\bb(i)=\gamma^{-1}(\bof,\grad\varphi_i)_\LT$.
\par
 Note that the $\HOne$ norms of the harmonic functions $\varphi_1,\ldots,\varphi_m$
  defined by \eqref{subeqs:HarmonicFunctions} are bounded,
 and hence
\begin{equation}\label{eq:bbBdd}
  \|\bb\|_\infty\leq C\|\bof\|_\LT.
\end{equation}
\par
 Similarly we can write \eqref{eq:DCoefficients} as
\begin{equation*}
  \bA_h\bc_h=\bb_h,
\end{equation*}
 where $\bA_h\in\R^{m\times m}$ is the matrix with components
 $\bA_h(i,j)=a_h(\varphi_{i,h},\varphi_{j,h})$,
 $\bc_h\in\R^m$ is the vector with components $\bc_h(i)=c_{i,h}$,
 and $\bb_h\in\R^m$ is the vector with
 components $\bb_h(i)=\gamma^{-1}(\bof,\grad\PiO\varphi_{i,h})_\LT$.
\par
 The estimates \eqref{eq:RHS} and \eqref{eq:ComponentError} are translated into
 the estimates
\begin{align}
  \|\bb-\bb_h\|_\infty&\leq \CExpo \hfactor\|\bof\|_\LT\label{eq:bbMinusbbh}
  \intertext{and}
  \|\bA-\bA_h\|_\infty&\leq \CExpo \hfactor.\label{eq:bAMinusBAh}
\end{align}
\par
 The estimate \eqref{eq:CoefficientError} then follows from
 \eqref{eq:bbBdd}--\eqref{eq:bAMinusBAh}
 and the identity
\begin{equation*}
  \bc-\bc_h=\bA^{-1}(\bb-\bb_h)+\bA^{-1}(\bA_h-\bA)\bA_h^{-1}\bb_h.
 \end{equation*}
\end{proof}

\subsection{Error Estimates for $\bu_h$}\label{subsec:uhError}
 We can put the results in Section~\ref{subsec:phihError}
 and Section~\ref{subsec:harmonicError}
 together to obtain error estimates for $\bu_h$.
\begin{theorem}\label{thm:buhError}
  Let $\omega$ be the maximum interior angle of $\O$ and
  $\epsilon$ be an arbitrary positive number.
    We have
\begin{subequations}\label{subeqs:buhError}
\begin{align}
  h^\frac12 \|\bn\times\bu_h\|_{L^2(\p\O)}+\|\bu-\bu_h\|_\LT&\leq
   \CO \hfactorG\|\bof\|_\LT\label{eq:buhErrorG}
   \intertext{if $(\pi/\omega)$ is not an integer, and}
  h^\frac12 \|\bn\times\bu_h\|_{L^2(\p\O)}+\|\bu-\bu_h\|_\LT&\leq
   \COE \hfactorE\|\bof\|_\LT\label{eq:buhErrorE}
\end{align}
 if $(\pi/\omega)$ is an integer.
\end{subequations}
\end{theorem}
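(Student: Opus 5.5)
We split $\bu-\bu_h$ according to the Hodge decomposition \eqref{eq:HodgeDecomposition} and the definitions \eqref{eq:uh1}, \eqref{eq:uh2}, and treat the $L^2$ error and the boundary term separately. We use the notation $\ExpoStar$ and $\CExpo$ of Remark~\ref{rem:RegularityNotation}, so the two cases of the theorem are handled simultaneously.

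\textbf{Simply connected case.} If $m=0$, then $\bu-\bu_h=\bcurl(\phi-\PiO\phi_h)$ piecewise. Since $\|\bcurl v\|_\LTD=|v|_\HOneD$, we get
\begin{equation*}
\|\bu-\bu_h\|_\LT=|\phi-\PiO\phi_h|_{h,1}\leq \CExpo\hfactor\|\bof\|_\LT
\end{equation*}
directly from Lemma~\ref{lem:phihError}.

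\textbf{Multiply connected case.} If $m\geq1$, we write
\begin{equation*}
\bu-\bu_h=\bcurl(\phi-\PiO\phi_h)+\sum_{j=1}^m\big[(c_j-c_{j,h})\grad\varphi_j+c_{j,h}\grad(\varphi_j-\PiO\varphi_{j,h})\big].
\end{equation*}
The first term is bounded as in the simply connected case. For the first part of each summand, Lemma~\ref{lem:CoefficientError} together with the boundedness $\|\grad\varphi_j\|_\LT\le C$ gives a bound of $\CExpo\hfactor\|\bof\|_\LT$. For the second part, \eqref{eq:CoefficientBdd} and Lemma~\ref{lem:CoefficientError} show that $|c_{j,h}|\le C\|\bof\|_\LT$, and \eqref{eq:HarmonicErrors} controls $|\varphi_j-\PiO\varphi_{j,h}|_{h,1}$. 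Combining these yields the $L^2$ part of \eqref{subeqs:buhError}.

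\textbf{Boundary term.} We bound $h^{\frac12}\|\bn\times\bu_h\|_{L^2(\p\O)}$ term by term.
\begin{itemize}
\item The $\bcurl(\PiO\phi_h)$ contribution is exactly Corollary~\ref{cor:phihBoundary}: multiplying \eqref{eq:phihBoundary} by $h^{\frac12}$ gives $\CExpo\hfactor\|\bof\|_\LT$.
\item For the harmonic contributions, \eqref{eq:OuterBoundary} and \eqref{eq:InnerBoundary} make $\varphi_j$ constant on each boundary component, so $\bn\times\grad\varphi_j=0$ on $\p\O$. Hence
\begin{equation*}
\bn\times\grad(\PiO\varphi_{j,h})=\bn\times\grad(\PiO\varphi_{j,h}-\varphi_j)\quad\text{on }\p\O.
\end{equation*}
\item We repeat the proof of Corollary~\ref{cor:phihBoundary} verbatim with $\phi,\phi_h$ replaced by $\varphi_j,\varphi_{j,h}$. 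This uses the trace inequality with scaling, the inverse estimate for $\PiO(\varphi_{j,h}-\varphi_j)$ in $H^{\frac32+\delta}(D)$, the estimate \eqref{eq:PiOErrorEstimate} for $\varphi_j-\PiO\varphi_j$ with $\|\varphi_j\|_\RegStar\le\CExpo$, and \eqref{eq:HarmonicErrors}. This gives
\begin{equation*}
\|\bn\times\grad(\PiO\varphi_{j,h})\|_{L^2(\p\O)}\le\CExpo\hfactor h^{-\frac12}.
\end{equation*}
\item Multiplying by $|c_{j,h}|\le C\|\bof\|_\LT$ and by $h^{\frac12}$, then summing over $j$, finishes the boundary estimate.
\end{itemize}

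\textbf{Main obstacle.} The main difficulty is this boundary estimate for the harmonic part. It requires the trace/inverse argument with fractional index $\frac32+\delta$, where $\delta=\ExpoStar/2-\frac14$. One must check $\delta>0$, which holds since $\ExpoStar>\frac12$, and that $\frac32+\delta\le 1+\ExpoStar$, so that $|\varphi_j-\PiO\varphi_j|_{H^{\frac32+\delta}(D)}$ is controlled with the power $h^{\ExpoStar-\frac12-\delta}$. The sum of the squared local bounds over boundary edges must then give exactly $h^{2\min(\ExpoStar,k)-1}$. When $k=1$ and $\ExpoStar>1$, one should take $\delta$ smaller, e.g.\ $\delta$ with $\frac12+\delta\le k$, and this case needs separate care.
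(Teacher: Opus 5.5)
Your proposal is correct and follows the same route as the paper. You split $\bu-\bu_h$ via the Hodge decomposition and combine Lemma~\ref{lem:phihError}, Lemma~\ref{lem:varphi}, Lemma~\ref{lem:CoefficientError}, \eqref{eq:CoefficientBdd} and Corollary~\ref{cor:phihBoundary}, exactly as the paper does. You also supply two details the paper's terse proof leaves implicit: the bound on the harmonic boundary terms $c_{j,h}\,\bn\times\grad(\PiO\varphi_{j,h})$, obtained by repeating the trace/inverse argument of Corollary~\ref{cor:phihBoundary} for $\varphi_j$, and the observation that $\delta$ should satisfy $\frac12+\delta<\min(\ExpoStar,k)$, which is harmless since the powers of $h_D$ cancel for any such $\delta$.
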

\begin{proof}
  In the case where $\gamma=0$, the estimates \eqref{subeqs:buhError}
  follow from \eqref{eq:HodgeDecomposition}, \eqref{eq:uh1} ,
  Lemma~\ref{lem:phihError} and
  Corollary~\ref{cor:phihBoundary}.
\par
 In the case where $m\geq 1$, we use \eqref{eq:CoefficientBdd},
 \eqref{eq:uh2} and also Lemma~\ref{lem:varphi}
 and Lemma~\ref{lem:CoefficientError}.
\end{proof}
%
\section{Numerical Results}\label{sec:Numerics}
 We use both structured and unstructured Voronoi meshes for the
 first experiment on the unit square where the exact solution is available.
 The structured meshes are dual meshes of structured simplicial meshes.
  The unstructured meshes are generated by
 \texttt{PolyMesher} (cf. \cite{TPPM:2012:PolyMesher}) with random seed points.
 Examples of these meshes are provided in Figure~\ref{fig:Meshes}.

\begin{figure}[H]
\centering
    \includegraphics[width=1.5in]{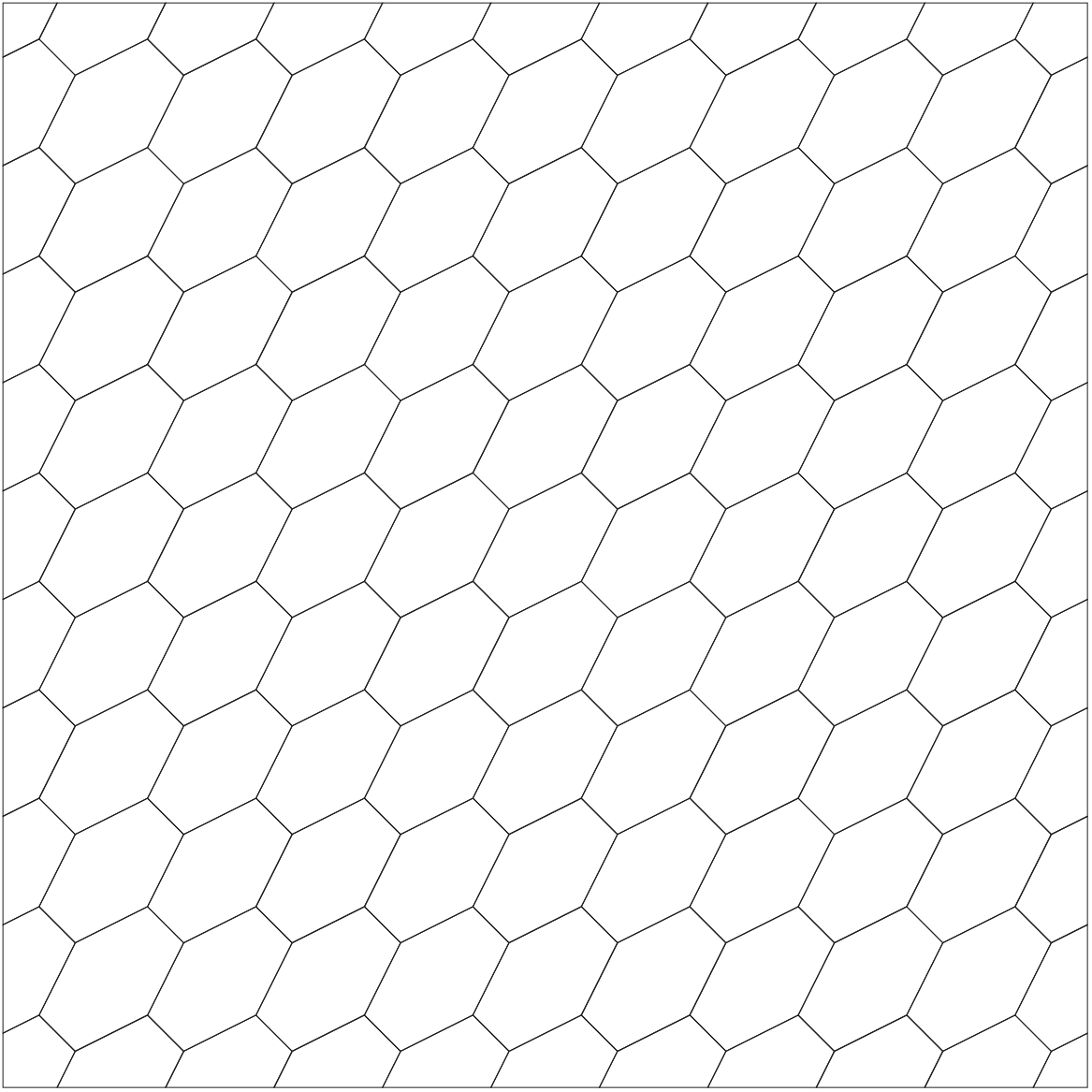}
\qquad
    \includegraphics[width=1.5in]{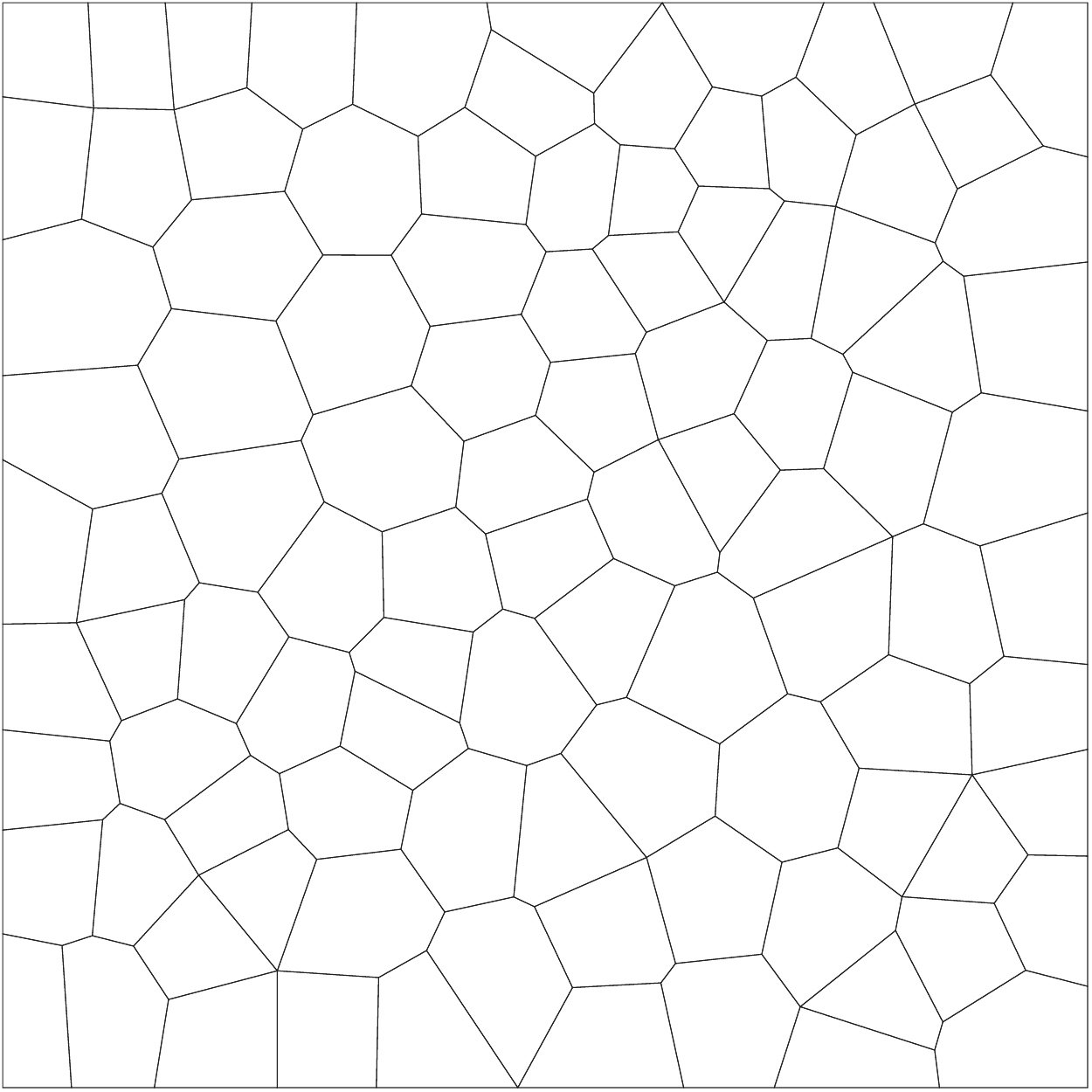}
\caption{Structured Voronoi mesh (left) and unstructured Voronoi mesh (right)}
\label{fig:Meshes}
\end{figure}
\par
 For the other experiments where the exact solutions are not available,
 we use nested polytopal meshes to facilitate the computation of the differences
 between the solutions on consecutive
  levels.  These nested meshes are
 obtained by the following procedure. (i) We create a random initial Voronoi mesh
 on the domain $\O$. (ii) We connect the barycenter of
 any polygon in the initial mesh with the midpoints of its edges
 (cf. Figure~\ref{fig:Refinements}, left)
 to form a quadrilateral mesh of $\O$.  (iii) We uniformly refine the
 quadrilateral mesh by connecting the intersection of the two diagonals
 to the midpoints of its edges
  (cf. Figure~\ref{fig:Refinements}, right).
 It was shown in \cite[Proposition~5.2]{DOS:1994:Hierachical} that
 the meshes generated by this procedure satisfy
  the shape regularity assumptions in \eqref{subeqs:MA}.
\begin{figure}[H]
\centering
\includegraphics[height=1in]{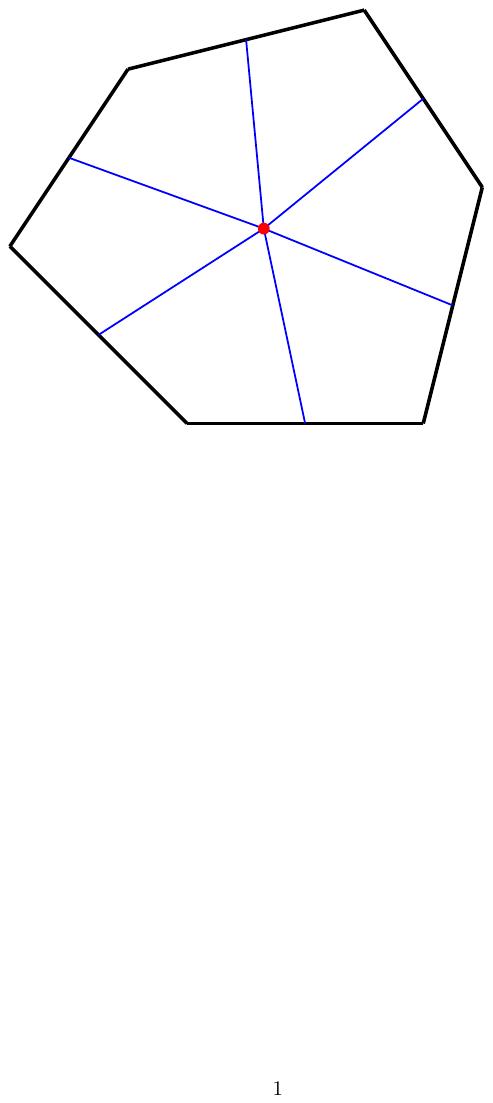}\qquad\qquad
\includegraphics[height=1in]{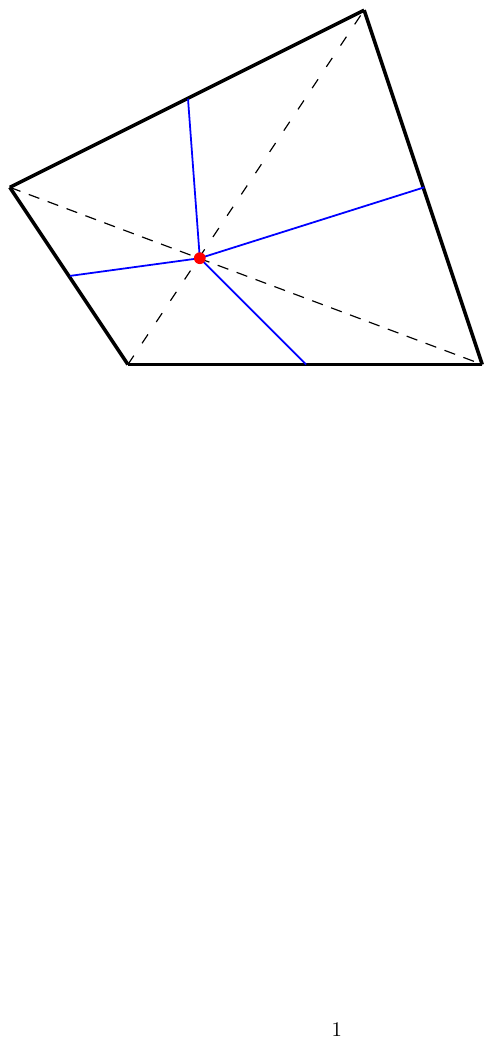}
\caption{Refinement procedure: refining a polygon into quadrilaterals (left) and
uniform refinement of a quadrilateral (right).}
\label{fig:Refinements}
\end{figure}
%
\par\noindent
{\bf Experiment 1} 
\quad
 We solve \eqref{eq:QuadCurl} on the unit square $(0,1)^2$ with $\beta=\gamma=0$, and we use
 both structured and nonstrutured Voronoi meshes.
 The manufactured solution is given by
  $\bu = \bcurl{\phi}$, where $\phi(x) = \sin^3(\pi x_1) \sin^3(\pi x_2)$. The numerical results
   for the virtual element methods are presented in Table~\ref{table:SmoothSolutionStructured}
 and Table~\ref{table:SmoothSolutionUnstructured}, where
  \begin{align*}
        e_{\bu_h}= \| \bu - \bu_h\|_\LT, \;
        e_{\xi_h} = |\xi - \PiO\xi_h|_{h,1} \quad (\xi=\curl\bu)
        \quad\text{and}\quad \eBdry=\|\bn\times\bu_h\|_{L^2(\p\O)}.
    \end{align*}
\par
 It is observed that, for both types of meshes,
  $\|\bu-\bu_h\|_\LT$ and $|\curl\bu-\PiO\xi_h|_{h,1}$ are $O(h)$ for the
 virtual element method with $k=1$ and $O(h^2)$ if $k=2$.  They agree with the results in
 Theorem~\ref{thm:xihError} and Theorem~\ref{thm:buhError} where $\omega=\pi/2$.
\par
 On the other hand, the $O(h^2)$ behavior of $\|\bn\times\bu_h\|_{L^2(\p\O)}$ for
 both $k=1$ and $k=2$ is better than the estimates in Theorem~\ref{thm:buhError}.
 In order to understand this behavior,
 we checked the maximum norm error $\max_{p\in\cN_{\p\O}}|(\bn\times\bu_h)(p)|$, where $\cN_{\p\O}$ is the
 set of the nodes of the virtual element space $V_h^k$
 along $\p\O$, and found that they
 also  decrease to $0$ at the rate of $O(h^2)$.  Therefore a possible explanation is that for problems with
 sufficiently smooth solutions the virtual element methods may actually converge in the maximum norm and
 may even enjoy some super-convergence when $k=1$, but such  analyses
 are beyond the scope of this paper.
\begin{longtable}[htbp]{@{}|c|c|c|c|c|c|c|c|@{}}
  \caption{Errors and convergence rates for Experiment~1
   on non-nested structured Voronoi meshes.}
  \label{table:SmoothSolutionStructured}\\
  \hline
  $h$ & $\#{\texttt{dofs}}$ & $e_{\bu_h}$ & $\texttt{rate}$
  & $e_{\xi_h}$ & $\;\; \texttt{rate}$ & $\eBdry$ & $\texttt{rate}$ \\ \hline
  \multicolumn{8}{|c|}{$k = 1$} \\ \hline
  2.9814e-01 & 90    & 1.1647e+00 & -      & 9.9246e+01 & -      & 2.6320e-01 & -      \\
  1.4907e-01 & 280   & 5.4917e-01 & 1.0847 & 5.1436e+01 & 0.9482 & 8.9737e-02 & 1.5524 \\
  7.4536e-02 & 960   & 2.4671e-01 & 1.1544 & 2.5104e+01 & 1.0349 & 2.3093e-02 & 1.9583 \\
  3.7268e-02 & 3520  & 1.1807e-01 & 1.0632 & 1.2425e+01 & 1.0146 & 5.6958e-03 & 2.0195 \\
  1.8634e-02 & 13440 & 5.8297e-02 & 1.0182 & 6.1970e+00 & 1.0036 & 1.4068e-03 & 2.0175 \\
  9.3169e-03 & 52480 & 2.9054e-02 & 1.0047 & 3.0971e+00 & 1.0007 & 3.4917e-04 & 2.0104 \\ \hline

  \multicolumn{8}{|c|}{$k = 2$} \\ \hline
  2.9814e-01 & 251    & 2.7456e-01 & -      & 3.0861e+01 & -      & 1.8833e-01 & -      \\
  1.4907e-01 & 801    & 7.1283e-02 & 1.9455 & 8.6428e+00 & 1.8362 & 4.9871e-02 & 1.9170 \\
  7.4536e-02 & 2801   & 1.8367e-02 & 1.9565 & 2.2548e+00 & 1.9385 & 1.2614e-02 & 1.9832 \\
  3.7268e-02 & 10401  & 4.6543e-03 & 1.9805 & 5.7301e-01 & 1.9764 & 3.1624e-03 & 1.9959 \\
  1.8634e-02 & 40001  & 1.1703e-03 & 1.9916 & 1.4422e-01 & 1.9903 & 7.9115e-04 & 1.9990 \\
  9.3169e-03 & 156801 & 2.9336e-04 & 1.9962 & 3.6163e-02 & 1.9957 & 1.9782e-04 & 1.9997 \\ \hline
\end{longtable}
\begin{longtable}{@{}|c|c|c|c|c|c|c|c|@{}}
  \caption{Errors and  convergence rates for Experiment~1
   on non-nested unstructured  Voronoi meshes.}
  \label{table:SmoothSolutionUnstructured}\\
  \hline
  $h$ & $\#{\texttt{dofs}}$ & $e_{\bu_h}$ & $\texttt{rate}$ & $e_{\xi_h}$ &
  $\texttt{rate}$ & $\eBdry$ & $\texttt{rate}$ \\ \hline
  \multicolumn{8}{|c|}{$k = 1$} \\ \hline
  3.7637e-01 & 48     & 1.2066e+00 & -      & 1.0267e+02 & -      & 5.2892e-01 & -      \\
  1.8360e-01 & 184    & 5.3903e-01 & 1.1225 & 5.1563e+01 & 0.9594 & 2.9837e-01 & 0.7976 \\
  8.4283e-02 & 1234   & 1.8092e-01 & 1.4022 & 1.8905e+01 & 1.2887 & 6.0540e-02 & 2.0486 \\
  4.1405e-02 & 5947   & 8.1750e-02 & 1.1177 & 8.6618e+00 & 1.0981 & 1.1904e-02 & 2.2883 \\
  2.0988e-02 & 26477  & 3.8903e-02 & 1.0929 & 4.1494e+00 & 1.0831 & 2.6990e-03 & 2.1840 \\
  1.0757e-02 & 96734  & 2.0202e-02 & 0.9805 & 2.1549e+00 & 0.9803 & 7.5059e-04 & 1.9148 \\ \hline

  \multicolumn{8}{|c|}{$k = 2$} \\ \hline
  3.7637e-01 & 145     & 3.0547e-01 & -      & 3.4296e+01 & -      & 4.4138e-01 & -      \\
  1.8360e-01 & 567     & 7.2817e-02 & 1.9975 & 8.8916e+00 & 1.8806 & 1.7322e-01 & 1.3030 \\
  8.4283e-02 & 3867    & 1.1101e-02 & 2.4158 & 1.3485e+00 & 2.4225 & 3.1320e-02 & 2.1967 \\
  4.1405e-02 & 18693   & 2.3647e-03 & 2.1757 & 2.8657e-01 & 2.1790 & 6.2542e-03 & 2.2666 \\
  2.0988e-02 & 82953   & 5.2424e-04 & 2.2171 & 6.3400e-02 & 2.2201 & 1.4410e-03 & 2.1604 \\
  1.0757e-02 & 303467  & 1.4262e-04 & 1.9477 & 1.7275e-02 & 1.9453 & 4.0201e-04 & 1.9101 \\\hline
\end{longtable}
\par\noindent

%
\par\noindent
{\bf Experiment~2} \quad We solve \eqref{eq:QuadCurl} on the unit square $(0,1)^2$ with $\beta=\gamma=0$,
 a piecewise constant right-hand side
  \begin{equation}\label{eq:PCRHS}
  \bof=\begin{cases}
    [1/4, 5/4]^t &\qquad\text{if $|x|<2^{-\frac12}$},\\[2pt]
    [1/2,3/2]^t&\qquad\text{if $2^{-\frac12}\leq|x|<1$},\\[2pt]
    [1,2]^t&\qquad\text{if $|x|\geq 1$},
  \end{cases}
  \end{equation}
 and we use nested meshes obtained by refining a randomly generated initial Voronoi mesh
 (cf. Figure~\ref{fig:Square}).
\begin{figure}[htbp]
 \centering
 \includegraphics[width=1.2in]{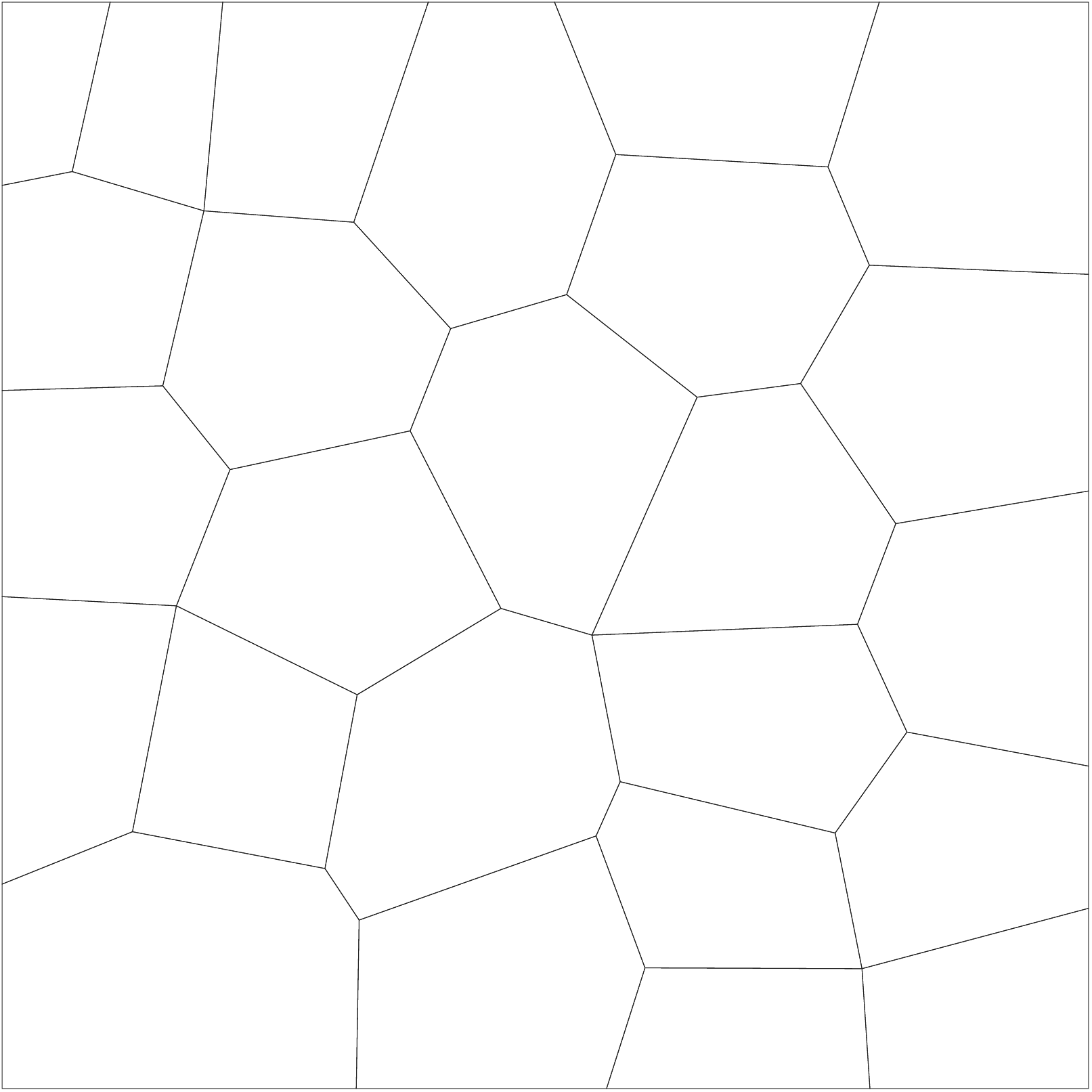}\qquad
 \includegraphics[width=1.2in]{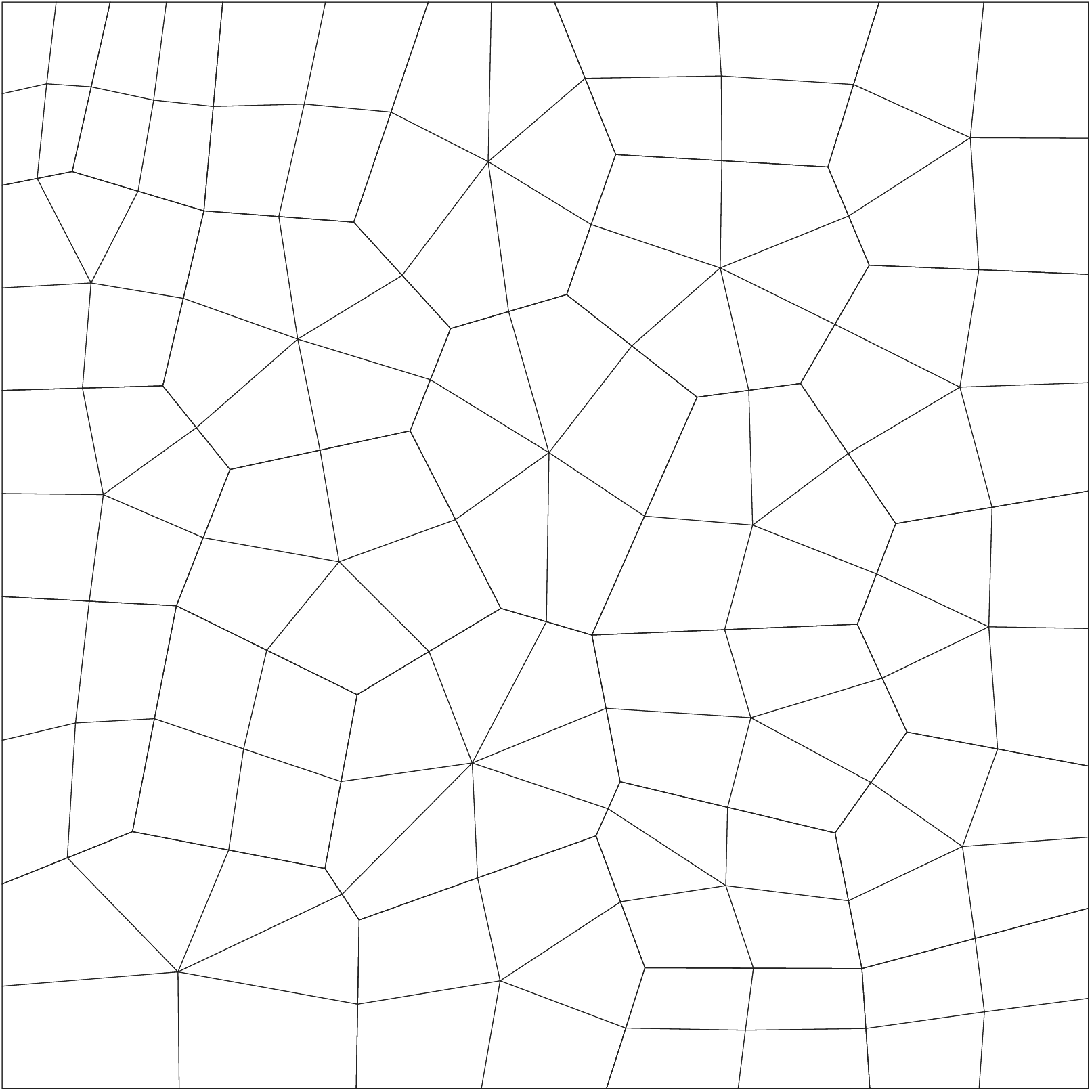}\qquad
 \includegraphics[width=1.2in]{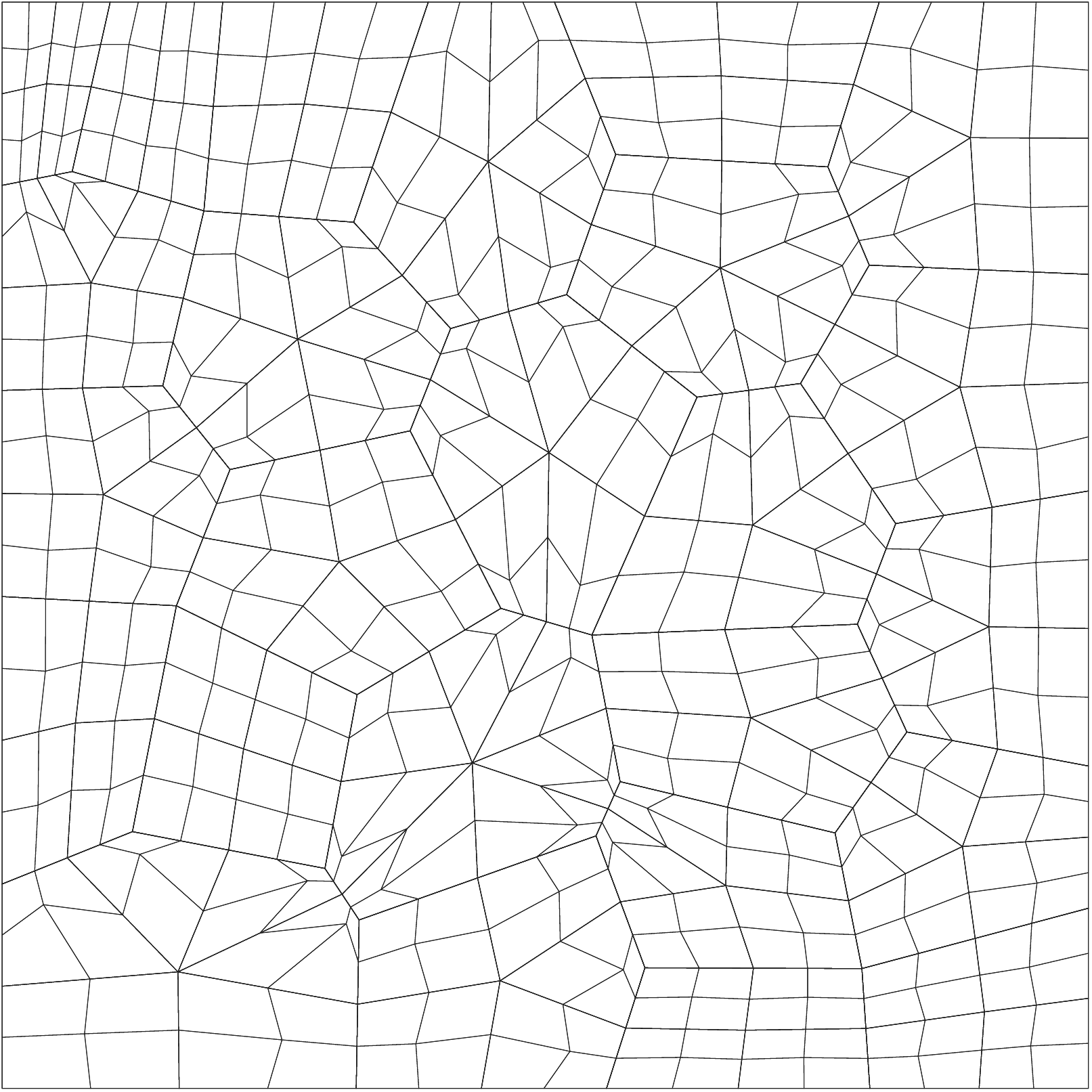}
\caption{Nested meshes on $\O=(0,1)^2$.}
\label{fig:Square}
\end{figure}
\par
 Since the exact solution is not available, we displayed in
 Table~\ref{table:PiecewiseSmoothRHSUnitSquare}
  the relative errors computed by
\begin{align*}
        \texttt{rel} \; e_{\bu_h}^i =
        \frac{\|\bu_{h}^i - \bu_{h}^{i+1}\|_\LT}{\|\bu_{h}^{i+1}\|_\LT}
        \quad \mbox{and} \quad \texttt{rel} \; e_{\xi_h}^i = \frac{|\Pi^{1,i}_{k,h} \xi_h^{i} - \Pi^{1,i+1}_{k,h} \xi_h^{i+1}|_{h_{i+1},1}}{|\Pi^{1,i+1}_{k,h} \xi_h^{i+1}|_{h_{i+1},1}},
\end{align*}
 where $i$ denotes the mesh level.
\begin{longtable}{@{}|c|c|c|c|c|c|c|c|@{}}
  \caption{Errors and convergence rates for Experiment~2
   on nested meshes.}
  \label{table:PiecewiseSmoothRHSUnitSquare}\\
  \hline
  $h$ & $\#{\texttt{dofs}}$ & $\texttt{rel}\;e_{\bu_h}$ & $\texttt{rate}$ & $\texttt{rel}\;e_{\xi_h}$
   & $\texttt{rate}$ & $\eBdry$ & $\texttt{rate}$ \\ \hline
  \multicolumn{8}{|c|}{$k = 1$} \\ \hline
  3.7637e-01 & 48     & -          & -      & -          & -      & - & -      \\
  1.9603e-01 & 145    & 2.3899e-01 & -      & 4.4218e-01 & -      & 3.1270e-04 & -      \\
  1.6110e-01 & 537    & 9.4817e-02 & 4.7112 & 2.1058e-01 & 3.7805 & 1.6109e-04 & 3.3801 \\
  8.0550e-02 & 2065   & 5.0310e-02 & 0.9143 & 1.1293e-01 & 0.8990 & 8.2304e-05 & 0.9688 \\
  4.0275e-02 & 8097   & 2.5513e-02 & 0.9796 & 5.7736e-02 & 0.9679 & 4.1834e-05 & 0.9763 \\
  2.0137e-02 & 32065  & 1.2801e-02 & 0.9950 & 2.9098e-02 & 0.9885 & 2.1120e-05 & 0.9861 \\
  1.0069e-02 & 127617 & 6.4186e-03 & 0.9959 & 1.4598e-02 & 0.9952 & 1.0616e-05 & 0.9923 \\
  5.0344e-03 & 509185 & 3.2129e-03 & 0.9984 & 7.3098e-03 & 0.9979 & 5.3227e-06 & 0.9961 \\ \hline

  \multicolumn{8}{|c|}{$k = 2$} \\ \hline
  3.7637e-01 & 145     & -          & -      & -          & -      & - & -      \\
  1.9603e-01 & 537     & 3.7070e-02 & -      & 1.1343e-01 & -      & 9.2194e-05 & -      \\
  1.6110e-01 & 2065    & 9.0838e-03 & 7.1667 & 3.1296e-02 & 6.5621 & 2.6338e-05 & 6.3848 \\
  8.0550e-02 & 8097    & 2.7673e-03 & 1.7148 & 9.6154e-03 & 1.7025 & 7.0165e-06 & 1.9083 \\
  4.0275e-02 & 32065   & 7.4310e-04 & 1.8969 & 2.7227e-03 & 1.8203 & 1.8103e-06 & 1.9545 \\
  2.0137e-02 & 127617  & 1.9973e-04 & 1.8955 & 7.4779e-04 & 1.8644 & 4.5982e-07 & 1.9771 \\
  1.0069e-02 & 509185  & 5.2869e-05 & 1.9176 & 2.0225e-04 & 1.8865 & 1.1588e-07 & 1.9885 \\
  5.0344e-03 & 2034177 & 1.2940e-05 & 2.0306 & 5.3856e-05 & 1.9089 & 2.9086e-08 & 1.9942 \\ \hline
\end{longtable}
\par
 Again the observed $O(h)$ ($k=1$) and $O(h^2)$ ($k=2$) behaviors
 for the estimated errors $\|\bu-\bu_h\|_\LT$
 and $|\curl \bu-\PiO\xi_h|_{h,1}$ agree with Theorem~\ref{thm:xihError}
  and Theorem~\ref{thm:buhError}.
\par
 Since the solution $\bu$ is no longer $C^\infty$, there is a difference in the behavior of
 $\|\bn\times\bu_h\|_{L^2(\p\O)}$ for $k=1$ ($O(h)$) and $k=2$ ($O(h^2)$).
 Both are better than the estimate in Theorem~\ref{thm:buhError}. Again the maximum norm error
 $\max_{p\in\cN_{\p\O}}|(\bn\times\bu_h)(p)|$ exhibits the same behaviors.
\par\smallskip\noindent
{\bf Experiment 3} \quad We solve \eqref{eq:QuadCurl} on the $\Gamma$-shaped domain
$\O=(-1,1)^2\setminus\big([0,1]\times[-1,0]\big)$ with $\beta=\gamma=0$,
 the piecewise constant right-hand side $\bof$
in \eqref{eq:PCRHS}, and we use
 nested meshes obtained by refining a randomly generated initial Voronoi mesh
(cf. Figure~\ref{fig:Gamma}).
\begin{figure}[htbp]
 \centering
 \includegraphics[width=1.2in]{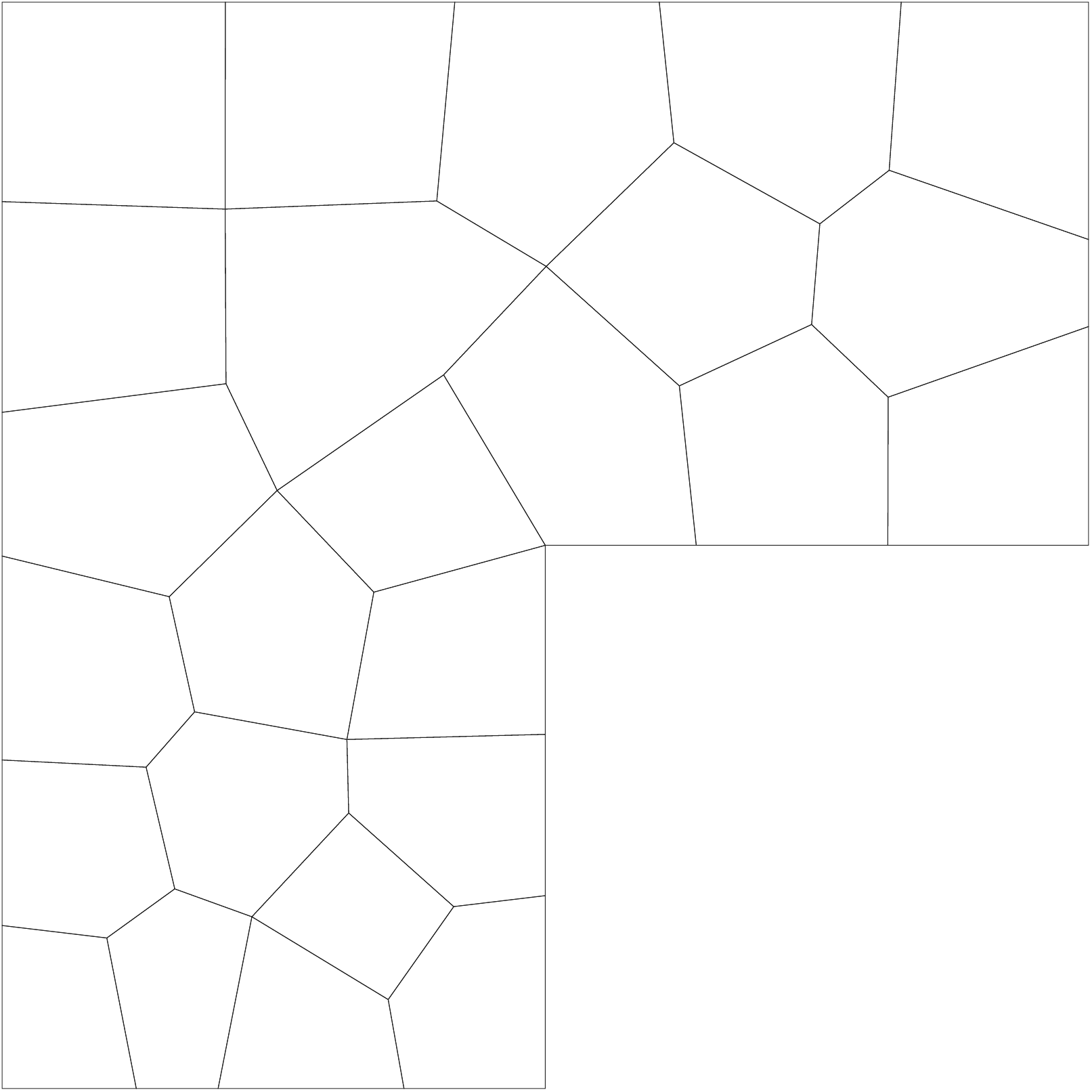}\qquad
 \includegraphics[width=1.2in]{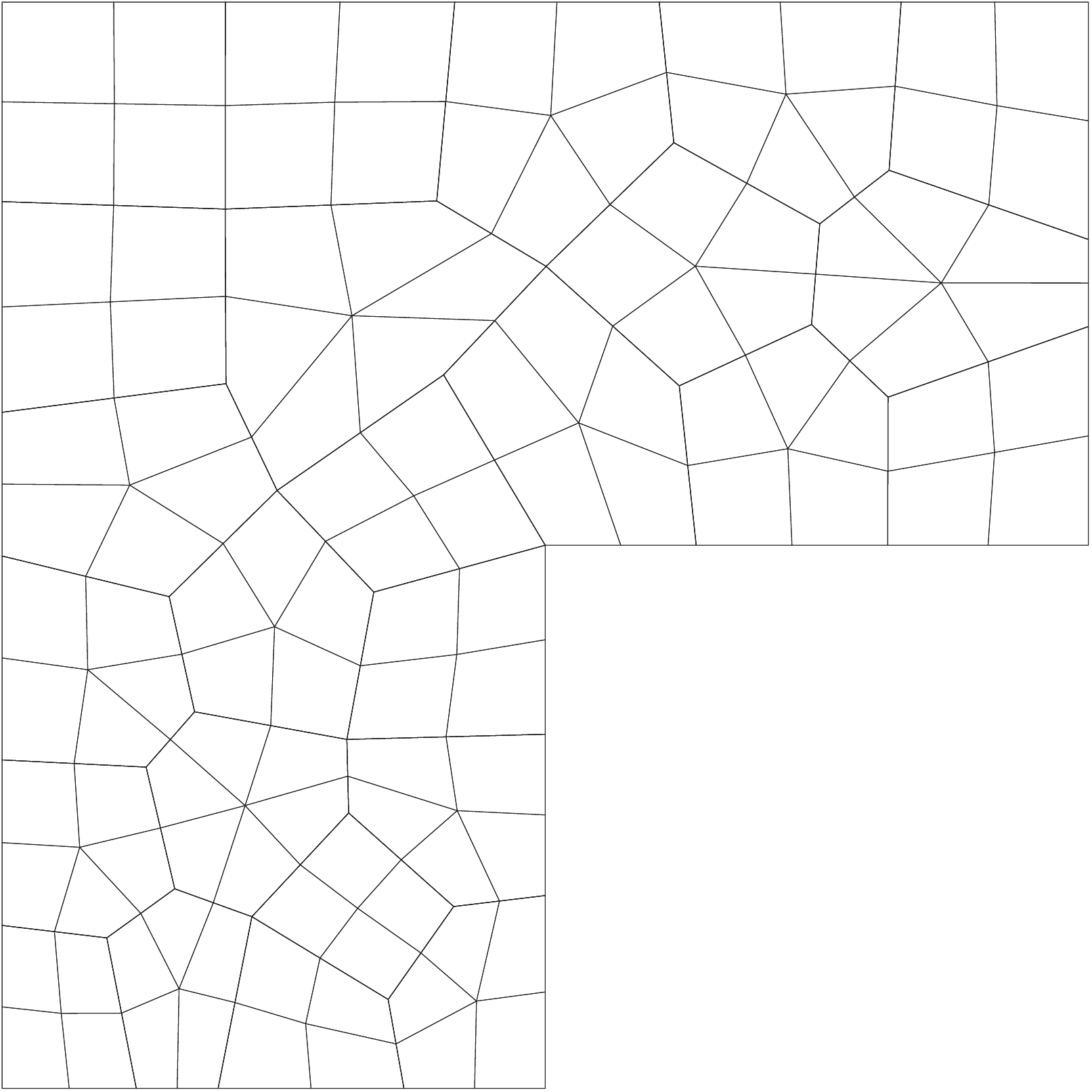}\qquad
 \includegraphics[width=1.2in]{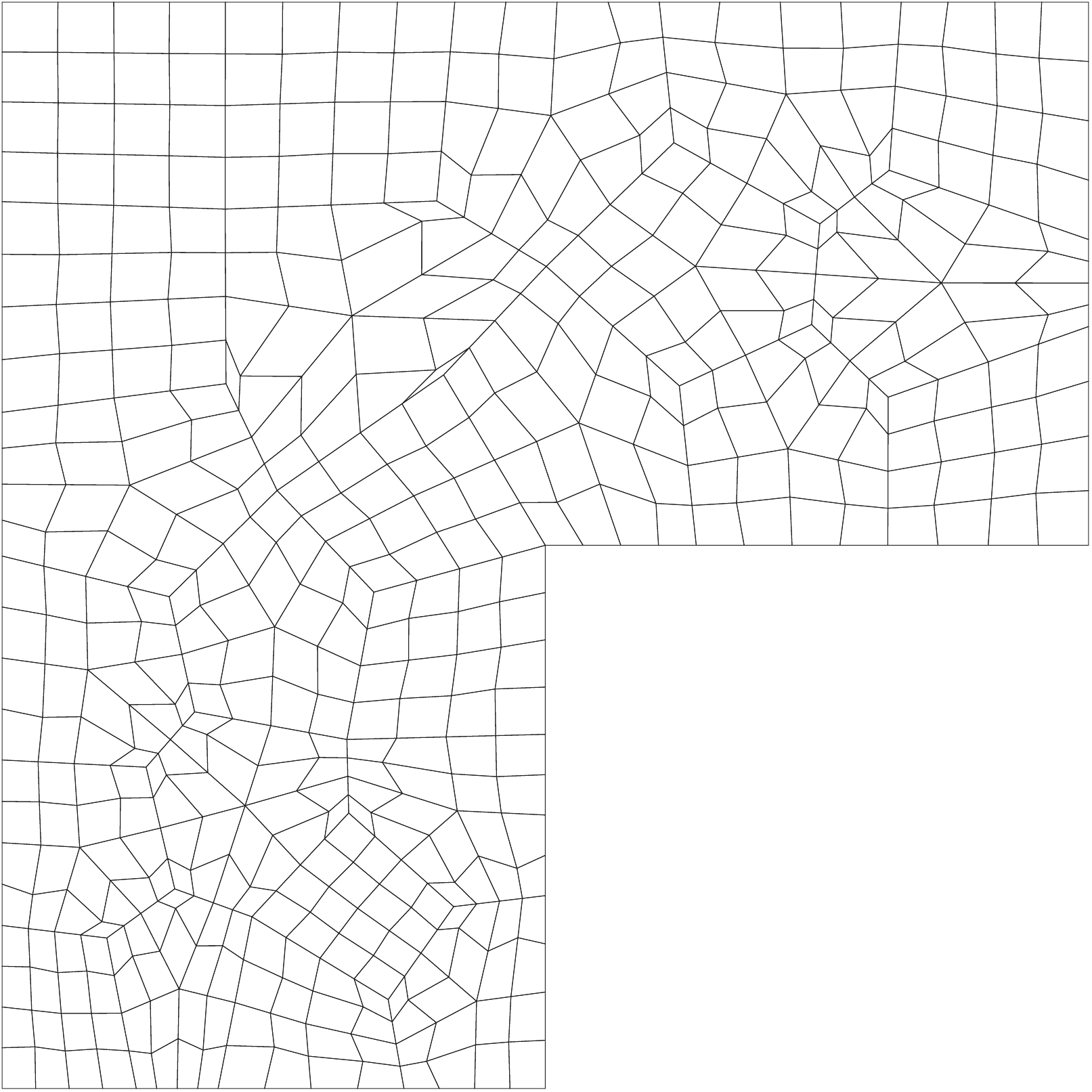}
\caption{Nested meshes on $\O=(-1,1)^2\setminus\big([0,1]\times[-1,0]\big)$.}
\label{fig:Gamma}
\end{figure}
\par
 The numerical results  for the virtual element methods are presented in
 Table~\ref{table:Gamma}, where the $O(h^\frac23)$ errors predicted by
 Theorem~\ref{thm:xihError} and Theorem~\ref{thm:buhError} (with $\omega=3\pi/2$)
 for $\|\bu-\bu_h\|_\LT$
 and $|\curl\bu-\PiO\xi_h|_{h,1}$ are observed.
 Note that the
 errors for the quadratic virtual element method are consistently smaller than the
 errors for the linear virtual element method with the same number of degrees of freedom.
\par
 The $O(h^\frac16)$ estimate for $\|\bn\times\bu_h\|_{L^2(\p\O)}$ predicted by
 Theorem~\ref{thm:buhError} is also observed.

\begin{longtable}{@{}|l|l|l|l|l|l|l|l|@{}}
  \caption{Errors and convergence rates for Experiment~3
   on nested meshes.}
  \label{table:Gamma}\\
  \hline
  $\qquad h$ & $\hspace{6pt}\#{\texttt{dofs}}$ & $\quad \texttt{rel}\;e_{\bu_h}$ & $\;\; \texttt{rate}$ & $\quad \texttt{rel}\;e_{\xi_h}$ & $\;\; \texttt{rate}$ & $\qquad \eBdry$ & $\;\; \texttt{rate}$ \\ \hline
  \multicolumn{8}{|c|}{$k = 1$} \\ \hline
  6.4474e-01 & 47     & -          & -       & -          & -       & - & -       \\
  3.6894e-01 & 143    & 2.8572e-01 & -       & 5.2525e-01 & -       & 8.9678e-03 & -       \\
  2.6844e-01 & 521    & 1.2388e-01 & 2.6282  & 2.5994e-01 & 2.2121  & 8.2326e-03 & 0.2690  \\
  1.3422e-01 & 1985   & 6.9851e-02 & 0.8266  & 1.3922e-01 & 0.9008  & 7.2007e-03 & 0.1932  \\
  6.7111e-02 & 7745   & 4.0430e-02 & 0.7888  & 7.0518e-02 & 0.9813  & 6.3781e-03 & 0.1750  \\
  3.3555e-02 & 30593  & 2.4014e-02 & 0.7515  & 3.5416e-02 & 0.9936  & 5.6825e-03 & 0.1666  \\
  1.6778e-02 & 121601 & 1.4529e-02 & 0.7249  & 1.7748e-02 & 0.9967  & 5.0699e-03 & 0.1646  \\
  8.3888e-03 & 484865 & 8.9082e-03 & 0.7058  & 8.8922e-03 & 0.9971  & 4.5234e-03 & 0.1645  \\ \hline

  \multicolumn{8}{|c|}{$k = 2$} \\ \hline
  6.4474e-01 & 143     & -          & -       & -          & -       & - & -       \\
  3.6894e-01 & 521     & 7.6210e-02 & -       & 1.5155e-01 & -       & 5.6603e-03 & -       \\
  2.6844e-01 & 1985    & 4.2800e-02 & 1.8144  & 4.7223e-02 & 3.6669  & 5.1152e-03 & 0.3184  \\
  1.3422e-01 & 7745    & 2.5322e-02 & 0.7572  & 1.4891e-02 & 1.6650  & 4.5512e-03 & 0.1685  \\
  6.7111e-02 & 30593   & 1.5784e-02 & 0.6820  & 4.5048e-03 & 1.7249  & 4.0639e-03 & 0.1634  \\
  3.3555e-02 & 121601  & 9.9197e-03 & 0.6701  & 1.5165e-03 & 1.5708  & 3.6267e-03 & 0.1642  \\
  1.6778e-02 & 484865  & 6.2467e-03 & 0.6672  & 6.7392e-04 & 1.1701  & 3.2347e-03 & 0.1650  \\
  8.3888e-03 & 1936385 & 3.9352e-03 & 0.6667  & 3.7998e-04 & 0.8267  & 2.8839e-03 & 0.1656  \\ \hline
\end{longtable}
%
\par\noindent
{\bf Experiment~4} \qquad We solve \eqref{eq:QuadCurl} on the domain
$\O=(0,1)^2\setminus [1/4,3/4]^2$ with $\beta=\gamma=1$,
 a smooth right-hand side
\begin{equation}\label{eq:SmoothRHS}
  \bof=\begin{bmatrix}
    (x_1^2+1)\sin x_1+x_1x_2^3+2\\
    (x_2^2+1)\cos x_1+x_1^3x_2^2-1
  \end{bmatrix},
\end{equation}
  and we use
  nested meshes obtained by refining a randomly generated initial Voronoi mesh
(cf. Figure~\ref{fig:Betti1}).
\begin{figure}[H]
 \centering
 \includegraphics[width=1.2in]{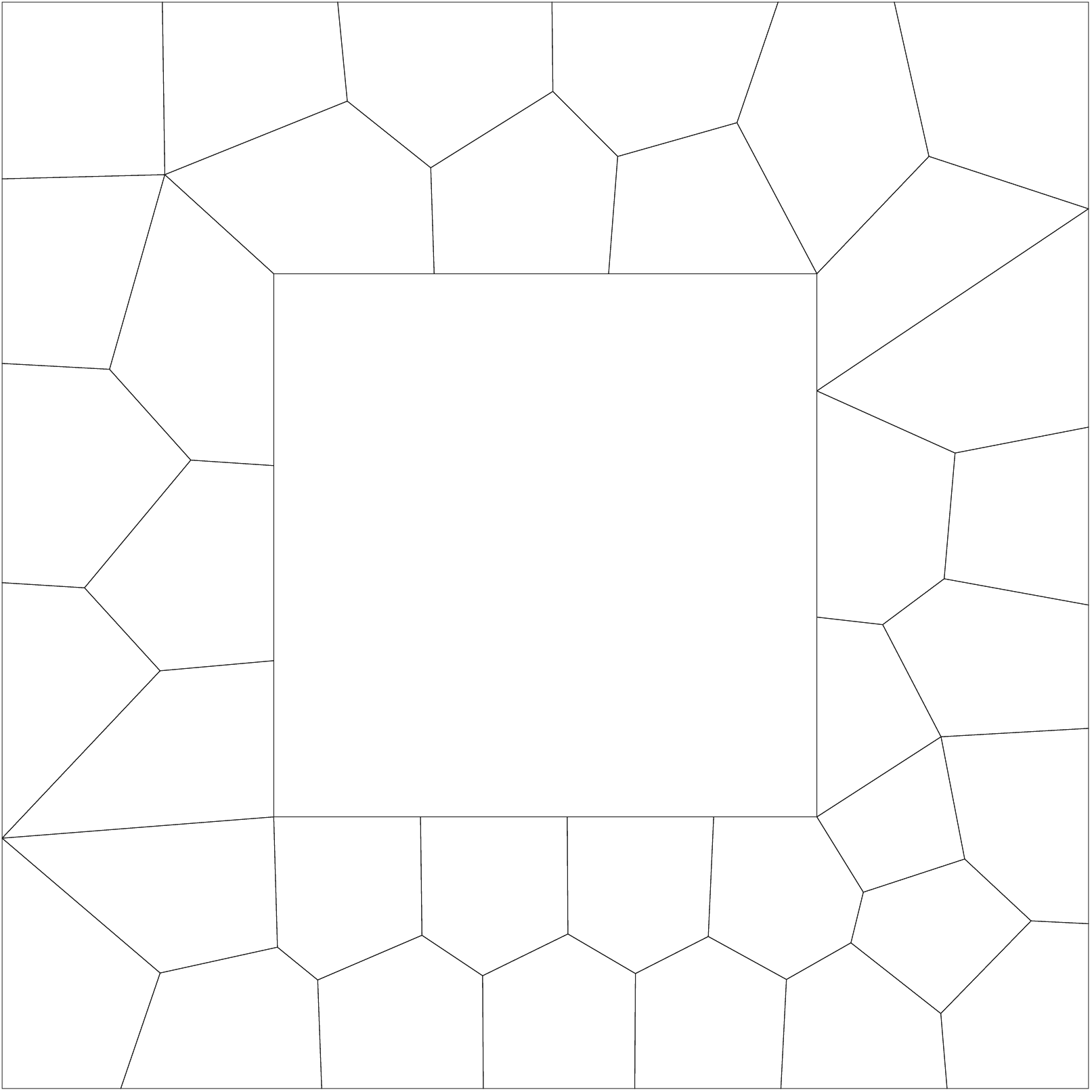}\qquad
 \includegraphics[width=1.2in]{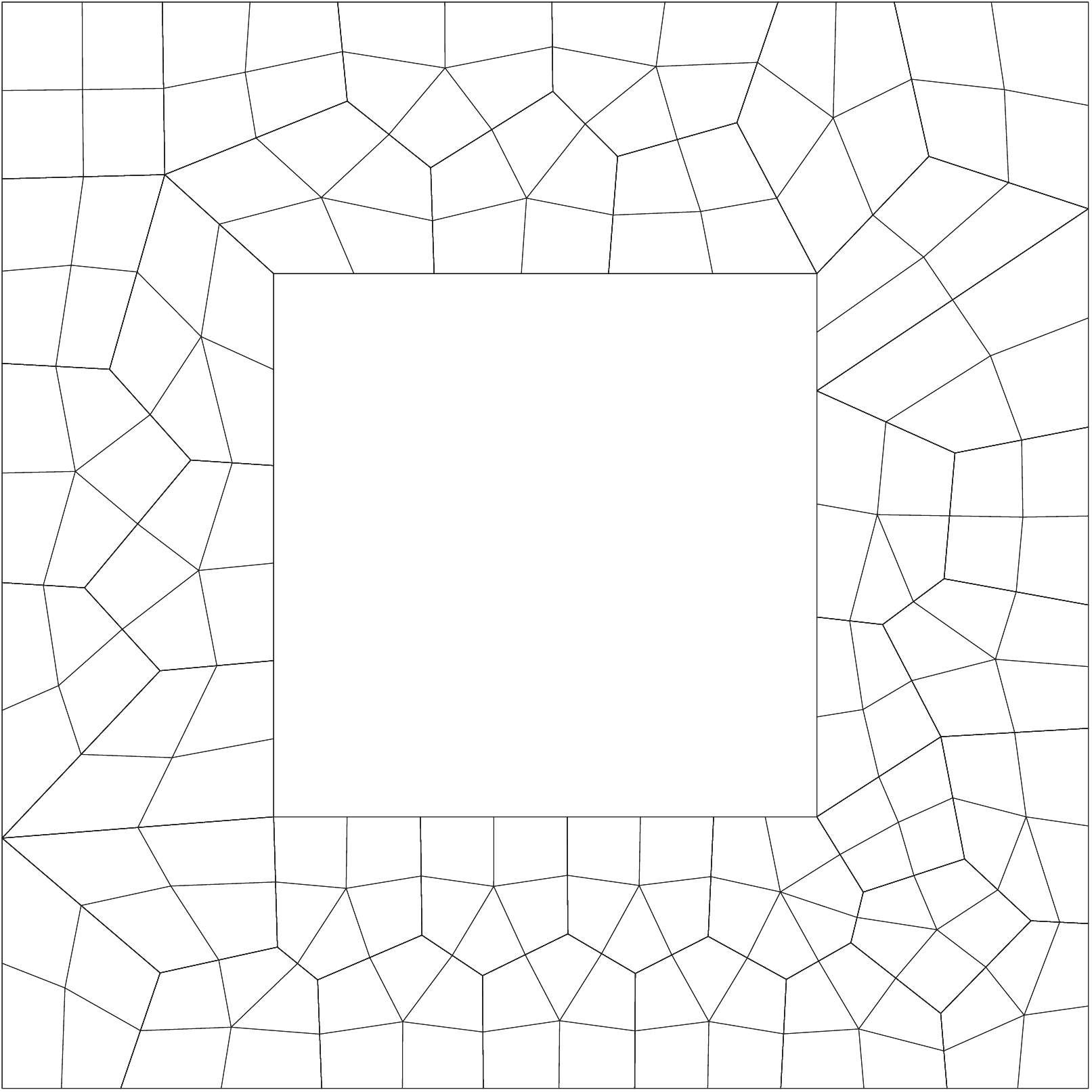}\qquad
 \includegraphics[width=1.2in]{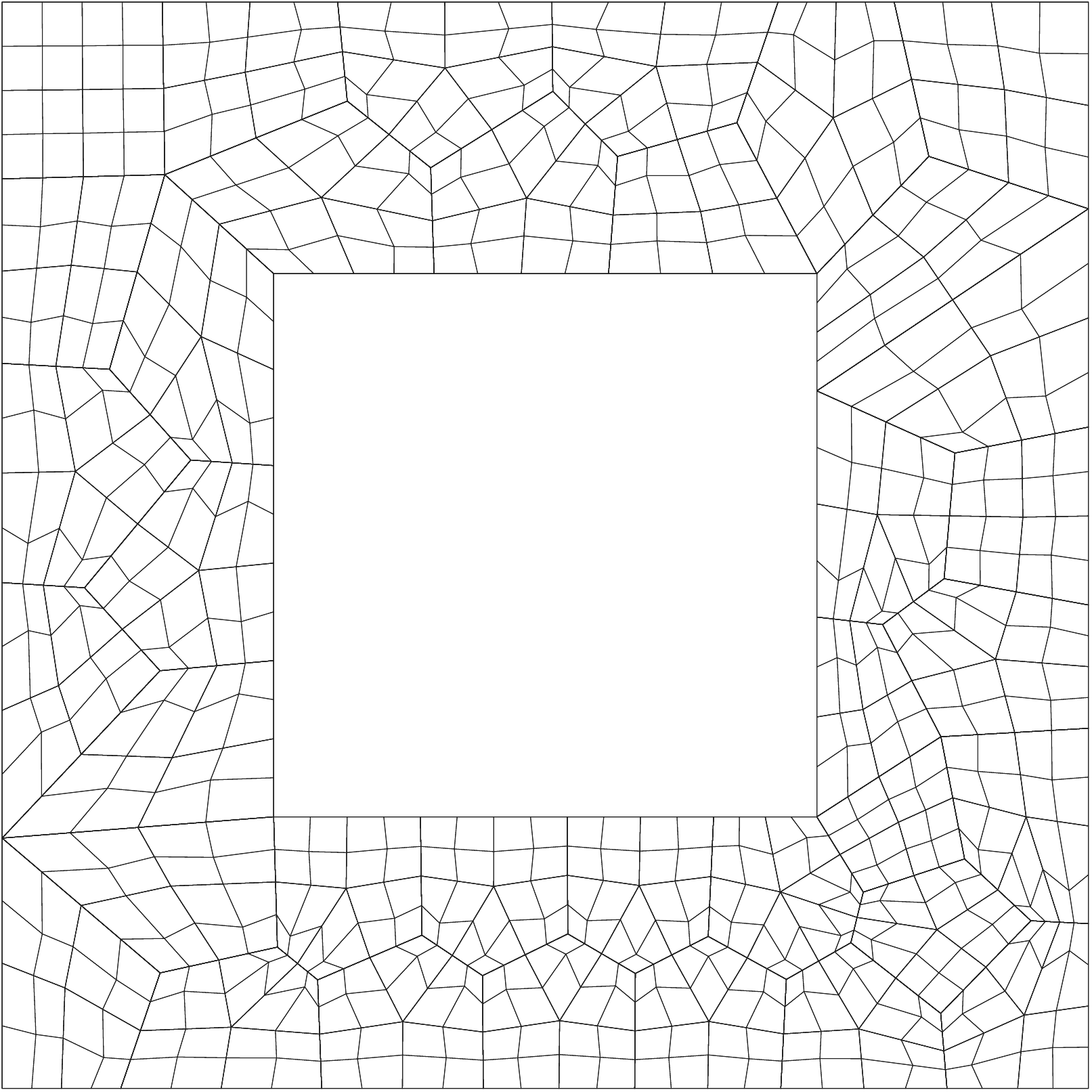}
\caption{Nested meshes on $\O=(0,1)^2\setminus [1/4,3/4]^2$.}
\label{fig:Betti1}
\end{figure}
\par
 The numerical results for the virtual element methods are displayed in
 Table~\ref{table:Betti1} where we also include the relative errors for the coefficient
 $c_1$ in the Hodge decomposition \eqref{eq:HodgeDecomposition} computed by
$$\texttt{rel} \; e_{c_1}^i = \frac{|c_{1,h_i} - c_{1,h_{i+1}}|}{|c_{1,h_{i+1}}|}.$$
\begin{table}[htbp]
  \caption{Errors and convergence rates for Experiment~4
   on nested meshes.}
  \label{table:Betti1}
    \resizebox{1 \textwidth}{!}{
\begin{tabular}{|c|c|c|c|c|c|c|c|c|c|c|}
  \hline
  $h$ & $\#{\texttt{dofs}}$ &
  $\texttt{rel}\;e_{\bu_h}$ & $\texttt{rate}$ &
  $\texttt{rel}\;e_{\xi_h}$ & $\texttt{rate}$ &
  $\eBdry$ & $\texttt{rate}$ &
  $c_1$ &
  $\texttt{rel}\;e_{c_1}$ & $\texttt{rate}$ \\ \hline

  \multicolumn{11}{|c|}{$k = 1$} \\ \hline
  3.0e-1 & 66      & -       & -     & -       & -     & -      & -      & -0.14937 & -       & -     \\
  1.7e-1 & 204     & 1.81e-1 & -     & 5.52e-1 & -     & 6.12e-4 & -      & -0.14961 & 1.58e-3 & -     \\
  1.1e-1 & 742     & 1.12e-1 & 1.08  & 2.60e-1 & 1.69  & 6.18e-4 & -0.022 & -0.14999 & 2.58e-3 & -1.10 \\
  5.5e-2 & 2820    & 7.02e-2 & 0.670 & 1.42e-1 & 0.875 & 5.55e-4 & 0.156  & -0.15019 & 1.33e-3 & 0.956 \\
  2.8e-2 & 10984   & 4.39e-2 & 0.676 & 7.67e-2 & 0.889 & 4.97e-4 & 0.159  & -0.15028 & 5.83e-4 & 1.19  \\
  1.4e-2 & 43344   & 2.75e-2 & 0.677 & 4.20e-2 & 0.869 & 4.45e-4 & 0.161  & -0.15032 & 2.42e-4 & 1.27  \\
  6.9e-3 & 1.72e5  & 1.72e-2 & 0.675 & 2.35e-2 & 0.836 & 3.97e-4 & 0.163  & -0.15033 & 9.85e-5 & 1.30  \\
  3.5e-3 & 6.86e5  & 1.08e-2 & 0.672 & 1.35e-2 & 0.800 & 3.54e-4 & 0.165  & -0.15034 & 3.96e-5 & 1.31  \\ \hline

  \multicolumn{11}{|c|}{$k = 2$} \\ \hline
  3.0e-1 & 204     & -       & -     & -       & -     & 0.0e+0  & -      & -0.14997 & -       & -     \\
  1.7e-1 & 742     & 8.72e-2 & -     & 1.45e-1 & -     & 4.25e-4 & -      & -0.15020  & 1.56e-3 & -     \\
  1.1e-1 & 2820    & 5.35e-2 & 1.10  & 6.61e-2 & 1.76  & 3.92e-4 & 0.179  & -0.15029 & 5.57e-4 & 2.31  \\
  5.5e-2 & 10984   & 3.24e-2 & 0.721 & 3.51e-2 & 0.911 & 3.51e-4 & 0.161  & -0.15032 & 2.27e-4 & 1.29  \\
  2.8e-2 & 43344   & 2.03e-2 & 0.676 & 2.11e-2 & 0.736 & 3.13e-4 & 0.165  & -0.15033 & 9.04e-5 & 1.33  \\
  1.4e-2 & 1.72e5  & 1.28e-2 & 0.670 & 1.31e-2 & 0.684 & 2.79e-4 & 0.165  & -0.15034 & 3.58e-5 & 1.34  \\
  6.9e-3 & 6.86e5  & 8.04e-3 & 0.667 & 8.25e-3 & 0.671 & 2.49e-4 & 0.166  & -0.15034 & 1.42e-5 & 1.34  \\
  3.5e-3 & 2.74e6  & 5.06e-3 & 0.667 & 5.19e-3 & 0.668 & 2.22e-4 & 0.166  & -0.15034 & 5.61e-6 & 1.33  \\ \hline
\end{tabular}
}
\end{table}
\par
 The $O(h^\frac23)$ errors for $\|\bu-\bu_h\|_\LT$ and $|\curl\bu-\PiO\xi_h|_{h,1}$
 predicted by Theorem~\ref{thm:xihError} and Theorem~\ref{thm:buhError} (with $\omega=3\pi/2$)
 are observed, and the quadratic virtual element method outperforms the linear
  virtual element method with the same number of degrees of freedom.
  The $O(h^\frac16)$ error for $\|\bn\times\bu_h\|_{L^2(\p\O)}$ also agrees with
 Theorem~\ref{thm:buhError}.
\par
 The error estimates for $|c_1-c_{1,h}|$ are better than the $O(h^\frac23)$ error predicted by
 Lemma~\ref{lem:CoefficientError}.  This is due to the fact that $\bof$ is a smooth vector
 field instead of just a vector field in $\LTT$ (cf. \cite[Remark~4.8 and Table~4.3]{BCNS:2012:Hodge}).
%
\begin{figure}[H]
 \centering
 \includegraphics[width=1.2in]{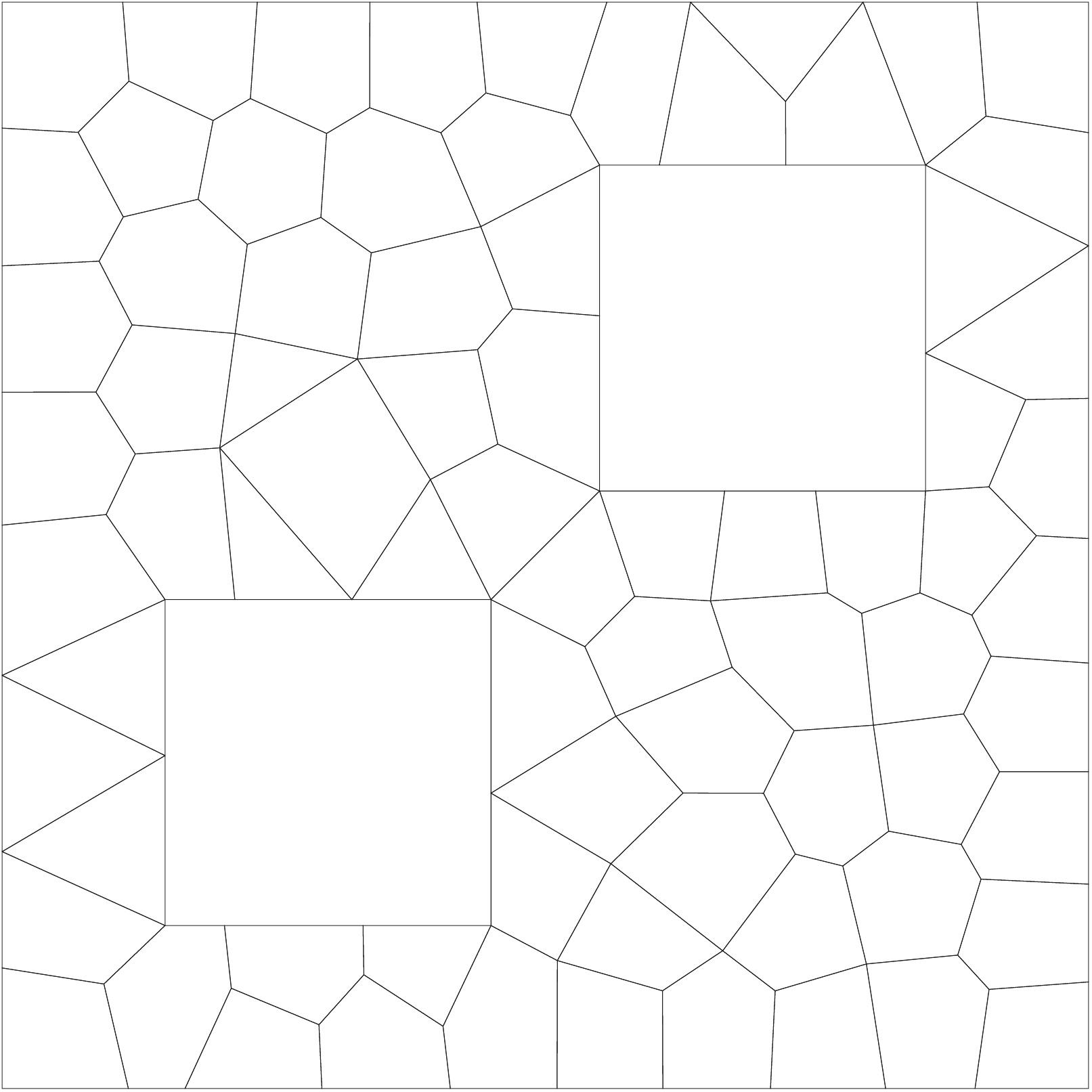}\qquad
 \includegraphics[width=1.2in]{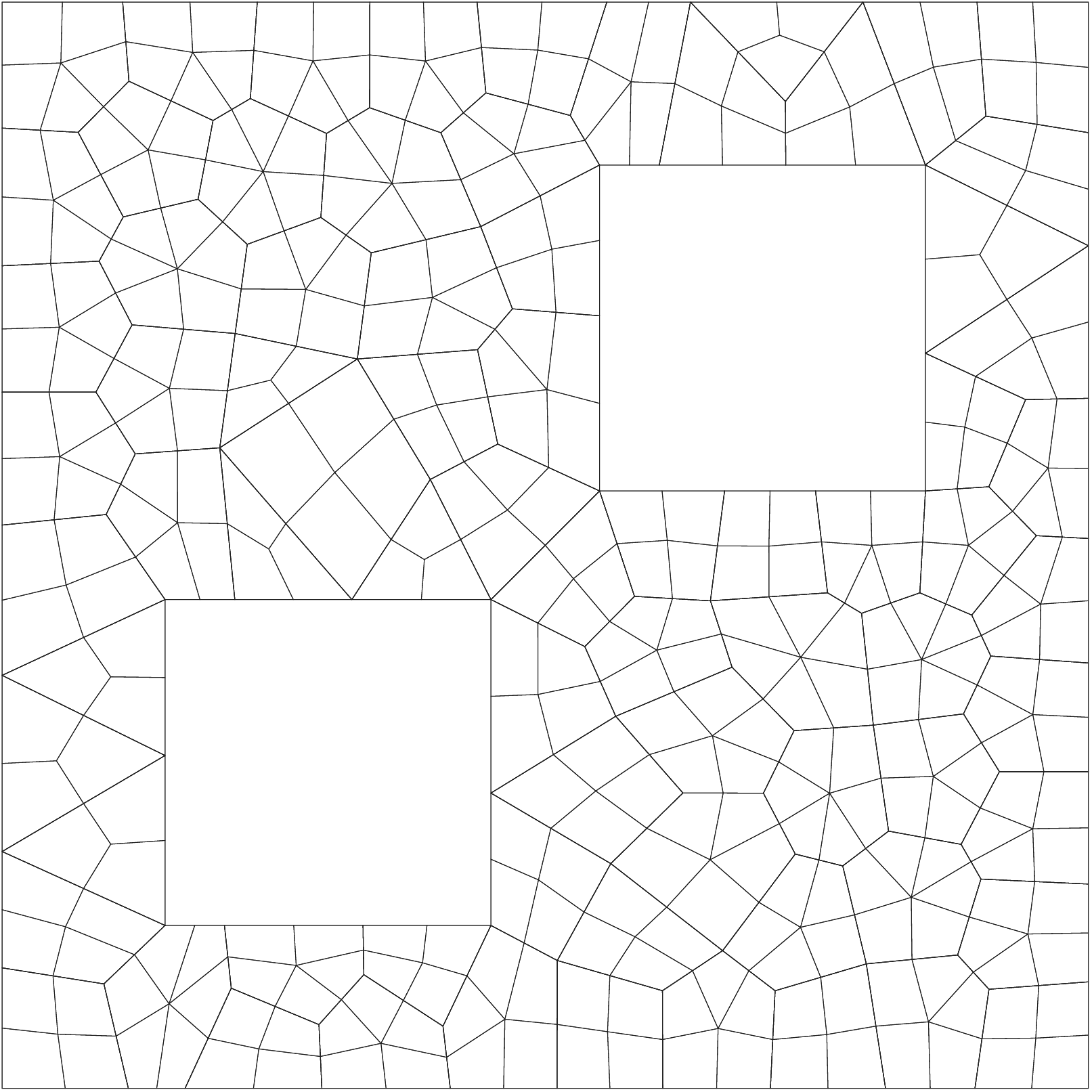}\qquad
 \includegraphics[width=1.2in]{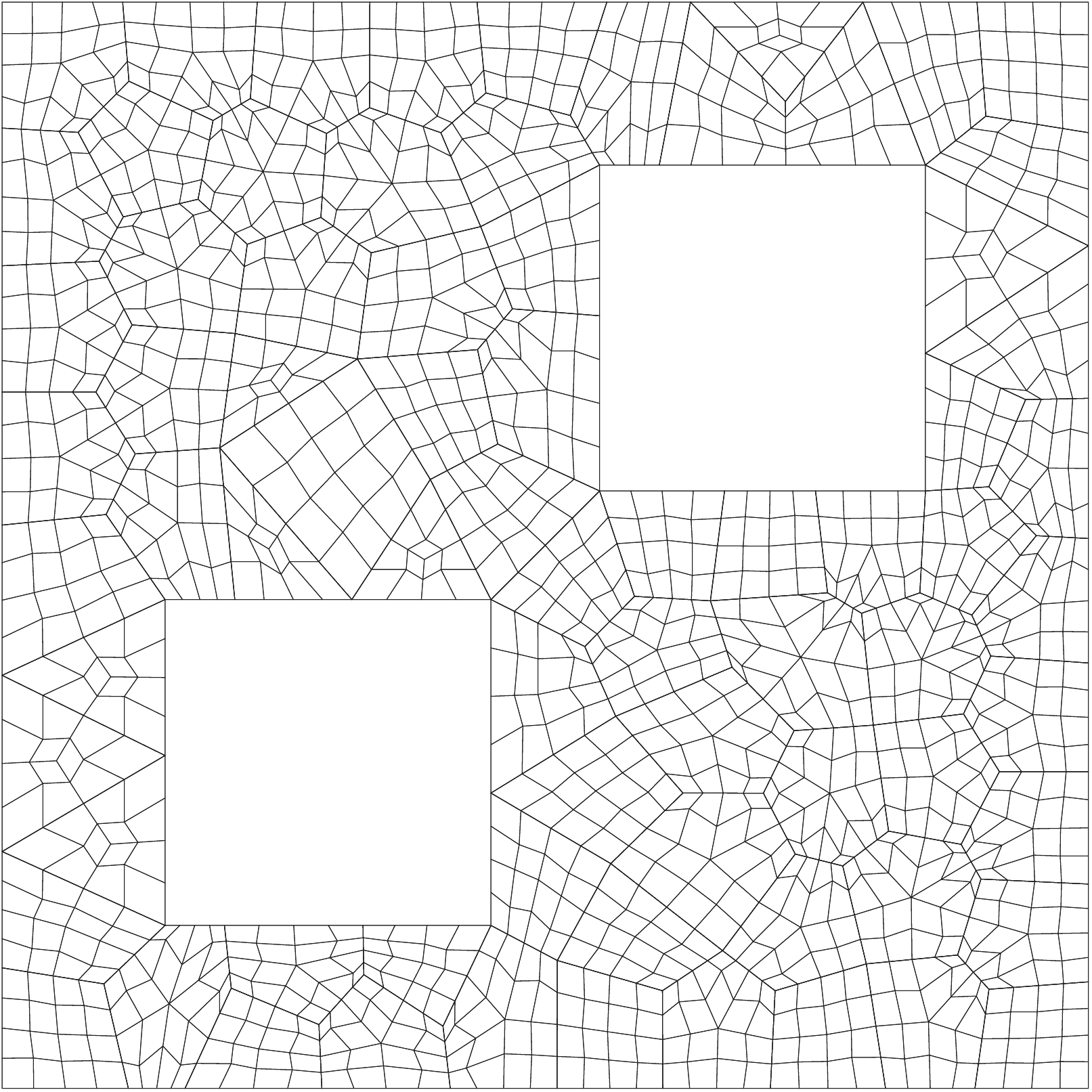}
\caption{Nested meshes on $\O=(0,1)^2\setminus\big([0.15,0.45]^2\times[0.55,0.85]^2\big)$.}
\label{fig:Betti2}
\end{figure}
\par\noindent
{\bf Experiment~5}\quad We solve \eqref{eq:QuadCurl} on the domain
$\O=(0,1)^2\setminus\big([0.15,0.45]^2\times[0.55,0.85]^2\big)$ with
$\beta=\gamma=1$, the smooth right-hand side $\bof$ in \eqref{eq:SmoothRHS},
and we use nested meshes obtained by refining a randomly generated initial Voronoi mesh
(cf. Figure~\ref{fig:Betti2}).

\par
 The numerical results for the virtual element methods are
 presented in Table~\ref{table:Betti2} together with the relative errors of the
 coefficients $c_1$ and $c_2$ in \eqref{eq:HodgeDecomposition}.
 They are similar to the results for Experiment~4.
{\small
\begin{table}[htbp]
\caption{Errors and convergence rates for Experiment~5 on nested meshes.}
  \label{table:Betti2}
  \resizebox{1 \textwidth}{!}{
\begin{tabular}{|l|l|l|l|l|l|l|l|l|l|l|l|l|l|}
  \hline
  $\quad h$ & $\#{\texttt{dofs}}$ &
  $\texttt{rel}\;e_{\bu_h}$ & $\texttt{rate}$ &
  $\texttt{rel}\;e_{\xi_h}$ & $\texttt{rate}$ &
  $\hspace{8pt}\eBdry$ & $\texttt{rate}$ &
  $\hspace{14pt} c_1$ & $\texttt{rel}\;e_{c_1}$ & $\texttt{rate}$ &
  $\quad c_2$ & $\texttt{rel}\;e_{c_2}$ & $\texttt{rate}$ \\ \hline
  \multicolumn{14}{|c|}{$k = 1$} \\ \hline
  2.2e-1 & 122    & -      & -      & -      & -      & - & -      & -0.08538 & -      & -      & -0.1559 & -      & -      \\
  1.2e-1 & 395    & 1.81e-1 & -     & 3.71e-1 & -     & 1.24e-3 & -     & -0.08523 & 1.7e-3 & -      & -0.1565 & 3.8e-3 & -      \\
  7.8e-2 & 1471   & 1.13e-1 & 1.12  & 1.75e-1 & 1.80  & 1.03e-3 & 0.45 & -0.08533 & 1.1e-3 & 1.01   & -0.1571 & 3.9e-3 & -0.05 \\
  3.9e-2 & 5663   & 7.22e-2 & 0.64 & 9.50e-2 & 0.88 & 8.73e-4 & 0.23 & -0.08540 & 8.8e-4 & 0.38  & -0.1574 & 2.0e-3 & 0.94  \\
  1.9e-2 & 22207  & 4.57e-2 & 0.66 & 5.09e-2 & 0.90 & 7.62e-4 & 0.19 & -0.08544 & 4.1e-4 & 1.10   & -0.1575 & 8.9e-4 & 1.18   \\
  9.7e-3 & 87935  & 2.88e-2 & 0.66 & 2.76e-2 & 0.88 & 6.73e-4 & 0.18 & -0.08545 & 1.7e-4 & 1.21   & -0.1576 & 3.7e-4 & 1.26   \\
  4.9e-3 & 3.5e5 & 1.81e-2 & 0.66 & 1.53e-2 & 0.85 & 5.97e-4 & 0.17 & -0.08546 & 7.3e-5 & 1.27   & -0.1576 & 1.5e-4 & 1.29   \\ \hline
  \multicolumn{14}{|c|}{$k = 2$} \\ \hline
  2.2e-1 & 395    & -      & -      & -      & -      & -& -      & -0.08540 & -      & -       & -0.1569 & -      & -      \\
  1.2e-1 & 1471   & 8.45e-2 & -     & 7.52e-2 & -     & 7.84e-4 & -     & -0.08542 & 2.6e-4 & -       & -0.1574 & 2.8e-3 & -      \\
  7.8e-2 & 5663   & 5.24e-2 & 1.14  & 3.67e-2 & 1.72  & 6.61e-4 & 0.40 & -0.08545 & 2.6e-4 & -3.3e-3 & -0.1575 & 1.0e-3 & 2.50   \\
  3.9e-2 & 22207  & 3.30e-2 & 0.67 & 2.04e-2 & 0.84 & 5.87e-4 & 0.17 & -0.08546 & 1.2e-4 & 1.06    & -0.1576 & 4.0e-4 & 1.33   \\
  1.9e-2 & 87935  & 2.06e-2 & 0.67 & 1.25e-2 & 0.71 & 5.23e-4 & 0.16 & -0.08546 & 4.9e-5 & 1.34    & -0.1576 & 1.5e-4 & 1.36   \\
  9.7e-3 & 3.5e5 & 1.30e-2 & 0.67 & 7.80e-3 & 0.67 & 4.66e-4 & 0.16 & -0.08546 & 1.9e-5 & 1.34    & -0.1576 & 6.1e-5 & 1.35   \\
  4.9e-3 & 1.4e6 & 8.16e-3 & 0.66 & 4.91e-3 & 0.66 & 4.15e-4 & 0.16 & -0.08546 & 7.7e-6 & 1.33    & -0.1576 & 2.4e-5 & 1.34   \\ \hline
\end{tabular}
}
\end{table}
}
\goodbreak
\section{Concluding Remarks}\label{sec:Conclusions}
 We have designed two virtual element methods with order
 $k=1$ and $k=2$ for the quad-curl problem \eqref{eq:QuadCurl} based on the
 Hodge decomposition
 approach.  They
 enjoy the optimal convergence allowed by the regularity of the solution.
 The method
 with order $k=1$ is the simplest polytopal method to date for the quad-curl
 problem in two dimensions.
\par
 We note that finite element methods for the quad-curl problem in three
 dimensions based on the Hodge decomposition approach have been developed in
 \cite{BCS:2024:3DQuadCurl}.  Therefore it is possible to design
 polytopal methods in three dimensions
 based on the $H(\div)$ and $H(\bcurl)$ conforming virtual element methods in \cite{BBMR:2016:VEMCurlDiv}.
\par
 The Hodge decomposition approach has also been applied to other
 electromagnetics problems in
 \cite{AM:2010:Hodge,BCNS:2012:Hodge,BGS:2017:Impedance,BCS:2025:3DHodgeMaxwell}.
 One can therefore
 apply the ideas in this paper to develop
 virtual element methods for these problems.
\par
 Finally, other polytopal methods for the quad-curl problem can also be developed
 by combining the Hodge decomposition approach with methodologies such as weak Galerkin
 methods (cf. \cite{MWY:2015:PolytopalWG}), discontinuous Galerkin methods
 (cf. \cite{CDGH:2017:DG})
 and hybrid high-order methods (cf. \cite{DD:2020:HHOBook}).

\appendix

\section{Error Analysis of Virtual Element Methods}\label{append:VEMError}
 Let $V$ be either $\HOne$ or $\HOnez$,  and $B(\cdot,\cdot)$ be a
 symmetric  bilinear form
 on $V$ that is bounded and coercive.
\begin{remark}\label{rem:ContinuousProblems}
  For the Neumann boundary value problem in Section~\ref{subsec:phi},
  $V=\HOne$ and the bilinear form
  $B(\cdot,\cdot)$ is given by
  $$B(v,w)=(\bcurl v,\bcurl w)_\LT+(v,1)_\LT(w,1)_\LT.$$
 For the Dirichlet boundary value problem defined by \eqref{subeqs:HarmonicFunctions},
 $V=\HOnez$ and
 $$B(v,w)=(\grad v,\grad w)_\LT.$$
 For the Dirichlet boundary value problems in Section~\ref{subsubsec:xihEstimateZero}
 and Section~\ref{subsubsec:xihEstimatePositive},
 $V=\HOnez$ and
  $$B(v,w)=(\bcurl v,\bcurl w)_\LT+\beta(v,w)_\LT.$$
\end{remark}
 The continuous problem is to find $u\in V$ such that
\begin{equation}\label{eq:ContinuousProblem}
  B(u,v)=(g,v)_\LT\qquad\forall\,v\in V,
\end{equation}
 where $g$ belongs to $\HOne$.
\par
 Let $\cQ_h^k\subset V$ be a virtual element space of order
 $k$ associated with a polytopal
 mesh $\cT_h$ of $\O$ (cf. Section~\ref{subsec:VEM}).
 The discrete problem is to find $u_h\in \cQ_h^k$ such that
\begin{equation}\label{eq:DiscreteProblem}
  B_h(u_h,v_h)=(g,\PiZ v_h)_\LT\qquad\forall\,v_h\in \cQ_h^k.
\end{equation}
\par
 Here $B_h(\cdot,\cdot)$ is a symmetric bilinear form defined on $\cQ_h^k$.
 We assume that $B_h(\cdot,\cdot)$ has the following properties:
\begin{equation}\label{eq:BhTwo}
  B_h(v_h,v_h)\geq C_\heartsuit \|v_h\|_\HOne^2 \qquad
  \forall\,v_h\in\cQ_h^k,
\end{equation}
 and
\begin{align}\label{eq:BhThree}
  |B_h(v_h,w_h)-B(v_h,w_h)|
  &\leq C_\diamondsuit \big(|v_h-\PiO v_h|_{h,1}+\|v_h-\PiZ v_h\|_\LT\big)\|w_h\|_\HOne
\end{align}
 for all $v_h,w_h\in\cQ_h^k$, where the operator
 $\PiO:V\longrightarrow \Poly_{k}(\O;\cT_h)$ is induced by
 a local projection $\Pi_{k,D}^1:\HOneD\longrightarrow\Poly_{k}(D)$
 that satisfies the properties in Section~\ref{subsubsec:PiO}.
\begin{remark}\label{rem:DiscreteProblems}
  For the discrete problems in Section~\ref{subsubsec:xihEstimateZero}
  and Section~\ref{subsubsec:xihEstimatePositive},
  $\cQ_h^k=\zV_h^k\subset\HOnez$ and
  the bilinear form $B_h(\cdot,\cdot)$ is given by
 $$B_h(v_h,w_h)=a_h(v_h,w_h)+\beta(\PiZ v_h,\PiZ w_h)_\LT,$$
 where $a_h(\cdot,\cdot)$ is defined by \eqref{eq:ah}.
  For the discrete problem \eqref{eq:phih},
  $\cQ_h^k=V_h^k\subset\HOne$ and we have
 $$B_h(v_h,w_h)=a_h(v_h,w_h)+(v_h,1)_\LT(w_h,1)_\LT.$$
 For the discrete problems in \eqref{subeqs:DHarmonicFunctions},
 $\cQ_h^k=\zV_h^k\subset\HOnez$
  and $B_h(\cdot,\cdot)$ is given by
  $$B_h(v_h,w_h)=a_h(v_h,w_h).$$
 One can use the properties of virtual elements in Section~\ref{subsubsec:PiO},
 Section~\ref{subsubsec:SD} and Section~\ref{subsubsec:Inconsistency}
 to verify
 \eqref{eq:BhTwo} and \eqref{eq:BhThree} in all these cases.
\end{remark}
\par
 We also assume that there is an interpolation operator
 $I_{k,h}:H^{1+s}(\O)\longrightarrow \cQ_h^k$
 defined for $s>0$ whose restriction to a polygon $D\in\cT_h$ satisfies the estimates in
 Section~\ref{subsubsec:Interpolation}.
\par
 We have, from \eqref{eq:BhTwo},
\begin{align}\label{eq:AbstractError}
  \|u-u_h\|_\HOne&\leq \|u-I_{k,h}u\|_\HOne+\|I_{k,h}u-u_h\|_\HOne\\
  &\leq \|u-I_{k,h}u\|_\HOne+C_\heartsuit^{-1}\max_{w_h\in \cQ_h^k}
  \frac{B_h(I_{k,h}u-u_h,w_h)}{\|w\|_\HOne}\notag
\end{align}
 and, in view of \eqref{eq:ContinuousProblem} and
 \eqref{eq:DiscreteProblem}, we can write
\begin{align}\label{eq:Numerator}
  B_h(I_{k,h}u-u_h,w_h)
 &=B_h(I_{k,h}u,w_h)-B(I_{k,h}u,w_h)+B(I_{k,h}u,w_h)-B(u,w_h)\\
 &\hspace{40pt}+(g,w_h-\PiZ w_h)_\LT.\notag
\end{align}
\par
 It follows from \eqref{eq:BhThree} that
\begin{align}\label{eq:DifferenceBdd}
  &|B_h(I_{k,h}u,w_h)-B(I_{k,h}u,w_h)|\\
  &\hspace{50pt}\leq
   C_\diamondsuit \big(|I_{k,h}u-\PiO I_{k,h}u|_{h,1}+
   \|I_{k,h}u-\PiZ I_{k,h}u\|_\LT\big)\|w_h\|_\HOne,
   \notag
\end{align}
 and, by the boundedness of $B(\cdot,\cdot)$,
\begin{equation}
  |B(I_{k,h}u,w_h)-B(u,w_h)|\leq C\|I_{k,h}u-u\|_\HOne\|w_h\|_\HOne.
\end{equation}
 Finally, by a standard error estimate for $\PiZ$, we have
\begin{equation}\label{eq:gError}
  (g,w_h-\PiZ w_h)_\LT=(g-\PiZ g,w_h-\PiZ w_h)_\LT\leq C h^2\|g\|_\HOne\|w_h\|_\HOne.
\end{equation}
\par
 Putting \eqref{eq:AbstractError}--\eqref{eq:gError} together, we arrive at
 the estimate
\begin{align*}
  \|u-u_h\|_\HOne&\leq C\Big[\|u-I_{k,h}u\|_\HOne+ |I_{k,h}u-\PiO I_{k,h}u|_{h,1}\\
  &\hspace{60pt}+
  \|I_{k,h}u-\PiZ I_{k,h}u\|_\LT+h^2\|g\|_\HOne\Big],
  \notag
\end{align*}
 which together with \eqref{eq:InterpolationOne} and \eqref{eq:InterpolationZero}
 implies
\begin{equation}\label{eq:ConcreteError}
  \|u-u_h\|_\HOne\leq \CExpo \hfactor \|g\|_\HOne,
\end{equation}
 provided that $k=1,2$ and $u$ satisfies
\begin{equation}\label{eq:EllipticRegularity}
  \|u\|_\RegStar\leq \CExpo \|g\|_\HOne.
\end{equation}
\par
 Finally  \eqref{eq:PiOPythagoras}, \eqref{eq:PiOErrorEstimate},
 \eqref{eq:ConcreteError} and \eqref{eq:EllipticRegularity} imply
\begin{align*}
  |u-\PiO u_h|_{h,1}&=|(u-\PiO u)+\PiO (u-u_h)|_{h,1}\notag\\
                    &\leq |u-\PiO u|_{h,1}+|u-u_h|_\HOne\\
                    &\leq \CExpo \hfactor \|g\|_\HOne.\notag
\end{align*}
\begin{remark}\label{rem:Limit}
  The $h^2$  that appears in \eqref{eq:gError} is the reason
  that we only consider $k=1$ and $k=2$.
\end{remark}

\end{document}